\documentclass[11pt]{amsart}

\usepackage{amsmath,amssymb,amsthm,mathtools}
\usepackage{mathrsfs}
\usepackage[margin=1.1in]{geometry}
\usepackage{enumitem}
\usepackage{xcolor}
\usepackage{tikz}
\usepackage{float}
\usepackage[hidelinks]{hyperref}
\DeclareRobustCommand{\finalrev}[1]{\begingroup#1\endgroup}
\DeclareRobustCommand{\rev}[1]{#1}
\newenvironment{revision}{}{}

\numberwithin{equation}{section}
\newtheorem{theorem}{Theorem}[section]
\newtheorem{proposition}[theorem]{Proposition}
\newtheorem{lemma}[theorem]{Lemma}
\newtheorem{corollary}[theorem]{Corollary}
\theoremstyle{definition}
\newtheorem{definition}[theorem]{Definition}
\newtheorem{example}[theorem]{Example}
\theoremstyle{remark}
\newtheorem{remark}[theorem]{Remark}

\newcommand{\kk}{\Bbbk}
\newcommand{\kb}{\overline{\kk}}
\newcommand{\Nil}{\mathcal{N}}
\newcommand{\cO}{\mathcal{O}}
\newcommand{\sO}{\mathscr{O}} % rational wavefront sets
\newcommand{\cS}{S} % rational-orbit coordinate map
\newcommand{\cF}{\mathcal{F}}
\newcommand{\tF}{\widetilde{\mathcal{F}}}
\newcommand{\st}{\mathrm{st}}
\newcommand{\mx}{\mathrm{max}}
\newcommand{\I}{\mathrm{I}}
\newcommand{\II}{\mathrm{II}}
\newcommand{\g}{\mathfrak{g}}
\newcommand{\so}{\mathfrak{so}}
\newcommand{\spl}{\mathfrak{sp}}
\newcommand{\sltwo}{\mathfrak{sl}_2}
\newcommand{\bA}{\overline{A}}
\newcommand{\bS}{\overline{\cS}}
\DeclareMathOperator{\Wh}{Wh}
\DeclareMathOperator{\Res}{Res}
\DeclareMathOperator{\Ind}{Ind}
\DeclareMathOperator{\Ad}{Ad}
\DeclareMathOperator{\ad}{ad}
\DeclareMathOperator{\pr}{pr}
\DeclareMathOperator{\disc}{disc}
\DeclareMathOperator{\Irr}{Irr}
\DeclareMathOperator{\Hom}{Hom}
\DeclareMathOperator{\Cay}{Cay}
\DeclareMathOperator{\sgn}{sgn}
\DeclareMathOperator{\Lie}{Lie}
\newcommand{\Lcan}{\mathcal{L}^{\mathrm{can}}}
\newcommand{\ResG}{\Res^{G^F}_{G_0^F}}
\newcommand{\IndG}{\Ind^{G^F}_{G_0^F}}

\begin{document}

\title{A note on rational wavefront sets for finite special orthogonal groups}

\author{Pengcheng Li}
\address{Department of Mathematics, The University of Hong Kong,
Hong Kong, China}
\email{pcli25@hku.hk}

\author{Zhicheng Wang}
\address{Department of Mathematics, Jilin University,
Changchun 130012, Jilin, P.~R. China}
\email{wangzhicheng@jlu.edu.cn}

\hypersetup{pdfauthor={Pengcheng Li and Zhicheng Wang}}

\date{}
\subjclass[2020]{20C33}
\keywords{Finite special orthogonal groups, rational wavefront sets,
generalized Whittaker models, nilpotent orbits, Lusztig's canonical quotient,
Alvis--Curtis duality}

\begin{abstract}
We study rational wavefront sets of irreducible representations of finite
special orthogonal groups over $\mathbb F_q$. We assume that $q$ is odd and
that both $q$ and its characteristic are sufficiently large. Using {the second author's} results for full orthogonal
groups, we determine the maximal rational wavefront sets and the dimensions of
the corresponding generalized Whittaker models. We show that each rational
wavefront set is the rational closure of its maximal elements. For unipotent
representations, we describe the maximal rational wavefront sets in terms of
Lusztig's canonical quotient and their behavior under Alvis--Curtis duality.
\end{abstract}

\maketitle

% ===============================================================
\section{Introduction}
% ===============================================================

Let $\kk=\mathbb F_q$ be a finite field of odd characteristic $p$, and let
$\kb$ be an algebraic closure of $\kk$. Let $(V,B)$ be a nondegenerate
symmetric space  over $\kk$.
Put $V_{\kb}=V\otimes_{\kk}\kb$,
$G=\mathrm O(V_{\kb})$ and $G_0=\mathrm{SO}(V_{\kb})$, and let $F$ be the
Frobenius endomorphism induced by the $\kk$-structure of $V$. Thus
$G^F=\mathrm O(V)$ and $G_0^F=\mathrm{SO}(V)$.

Write $\g=\Lie(G)=\Lie(G_0)$ for the common Lie algebra, so that
$\g^F=\so(V)$, and let $\Nil\subset\g$ be the nilpotent cone. 

As in \cite[Section~1]{Wang}, identify
$\kk^\times/(\kk^\times)^2$ with $\{\pm\}$ and let $\varepsilon(-1)$ denote
the square class of $-1$. We use the discriminant convention
\[
\disc(V)=\varepsilon(-1)^{\dim V(\dim V-1)/2}\det(B)
\in\kk^\times/(\kk^\times)^2,
\]
where $\det(B)$ denotes the square class of the determinant of a Gram matrix of $B$.
With $\epsilon=\disc(V)$, we have
\[
G^F\cong\mathrm O_{\dim V}^{\epsilon}(\kk),\qquad
G_0^F\cong\mathrm{SO}_{\dim V}^{\epsilon}(\kk).
\]
The sign specifies the rational form; in even dimension, the sign $+$ gives
the split form. 

\finalrev{For $\dim V\geqslant3$, we impose the characteristic hypotheses
of \cite{Wang}; in particular $p>3(h_G-1)$, where $h_G$ is
the Coxeter number of $G_0$. We also assume that $q$ is
sufficiently large for \cite[Theorem~2.2]{Wang} to apply.}
Fix a nontrivial additive character $\psi\colon\kk\to\mathbb C^\times$.

\subsection{Rational orbits and wavefront sets}

For $J=G,G_0$ and $X\in\Nil^F$, set
\[
\cO_X^J=\Ad(J^F)X\subset\Nil^F,\qquad
(\cO_X^J)^{\st}=\Ad\bigl(J(\kb)\bigr)X\subset\Nil.
\]
The first is an $F$-rational nilpotent orbit, and the second is the $F$-stable orbit containing it, which we call its \emph{stable orbit}. We write $\Nil^F/\Ad(J^F)$ for the set of rational
orbits. \rev{The partition of an orbit in $\Nil$ is the Jordan type of its elements
acting on $V_{\kb}$. It is called \emph{very even} if all its parts are even. A
{stable} $G$-orbit is a single {stable} $G_0$-orbit unless its partition is
very even, in which case it is the union of two {stable} $G_0$-orbits; see
\cite[Section~5.1]{CM} and Lemma~\ref{lem:split}.}

For {stable} orbits of the same algebraic group, write
{$\cO_1^{\st}\leqslant_{\st}\cO_2^{\st}$ if}
\[
\cO_1^{\st}\subset\overline{\cO_2^{\st}},
\]
where the bar is Zariski closure in $\Nil$ over $\kb$.
{For rational $G_0^F$-orbits, we use the order
$\leqslant_{G_0}$ of Definition~\ref{def:order}; for $\leqslant_G$ on
rational $G^F$-orbits, see \cite[Section~3.2]{Wang}.}
Let $\sigma\in\Irr(G_0^F)$ and let $\cO_0$ be a rational $G_0^F$-orbit.
{Let $\Gamma^{G_0}_{\cO_0}$ be the generalized Gelfand--Graev
representation and let $\Wh_{\cO_0}(\sigma)$ be the generalized Whittaker
model (see \eqref{eq:GGGR} and \eqref{eq:Wh} for the definitions).}
The \emph{rational wavefront set} of $\sigma$ is
\[
\sO(\sigma):=\bigcup_{\substack{\cO_0\in\Nil^F/\Ad(G_0^F)\\ \Wh_{\cO_0}(\sigma)\neq0}}\cO_0 .
\]
Following \cite[Section~1]{Wang}, we identify a collection of rational
orbits with its union in $\Nil^F$. \rev{Thus $\cO_0\in\sO(\sigma)$ means that
$\cO_0$ is one of the orbits in this union, and for such a union $\mathcal U$
we write $|\mathcal U|$ for the number of its orbits.} Maps such as $\pr$ and
$\cS$ act on the corresponding collections of orbits; when their images and
inverse images are written as wavefront sets, they are interpreted as unions.
With this convention we put
\begin{align*}
\sO(\sigma)^{\mx}&:=\bigcup\bigl\{\cO_0\in\sO(\sigma):
  \text{there is no }\cO_1\in\sO(\sigma)\text{ with }\cO_0<_{G_0}\cO_1\bigr\},\\
\sO(\sigma)^{\mx,\st}&:=\bigcup\bigl\{\cO_0\in\sO(\sigma):
  \text{there is no }\cO_1\in\sO(\sigma)\text{ with }\cO_0^{\st}<_{\st}\cO_1^{\st}\bigr\}.
\end{align*}
For $\pi\in\Irr(G^F)$, the sets $\sO(\pi)$, $\sO(\pi)^{\mx}$ and
$\sO(\pi)^{\mx,\st}$ are defined in the same way, using $G^F$-orbits and the
order $\leqslant_G$.

\subsection{Background and motivation}

Kawanaka introduced generalized Gelfand--Graev representations to
study irreducible characters through nilpotent orbits \cite{Kaw}. By the work
of Lusztig, Geck--Malle, Achar--Aubert and Taylor \cite{Lus92,GM,AA,Tay}, for
every irreducible representation $\sigma$ of $G_0^F$ the set
\[
\bigl\{\cO_0^{\st}:\cO_0\in\sO(\sigma)^{\mx,\st}\bigr\}
\]
{consists of a single orbit, called the \emph{Kawanaka wavefront orbit}
of $\sigma$ and denoted by $\cO_\sigma^{\st}$.}
{With this notation,}
\[
\sO(\sigma)^{\mx,\st}
=\bigcup_{\substack{\cO_0\in\sO(\sigma)\\
                    \cO_0^{\st}=\cO_\sigma^{\st}}}\cO_0
\subset(\cO_\sigma^{\st})^F.
\]
\rev{Similarly, for $\pi\in\Irr(G^F)$ we write $\cO_\pi^{\st}$ for the
{stable} $G$-orbit containing $\sO(\pi)^{\mx,\st}$. In \cite{Wang} this
{stable} orbit is also denoted by $\sO(\pi)^{\mx,\st}$, by abuse of notation;
we distinguish between the two.}

For finite symplectic and full orthogonal groups, {the second author} proved
\cite[Theorem~1.1]{Wang} that
\[
\sO(\pi)^{\mx}=\sO(\pi)^{\mx,\st},\qquad
\sO(\pi)=\overline{\sO(\pi)^{\mx}},
\]
where the bar denotes closure with respect to the rational order. {The second author} also
determined $\sO(\pi)^{\mx}$ explicitly and computed the corresponding Whittaker
dimensions \cite[Theorems~1.4 and~1.9]{Wang}. For unipotent representations, he
described $\sO(\pi)^{\mx}$ in terms of fibers of Lusztig's canonical quotient and proved
that this description is compatible with Alvis--Curtis duality
\cite[Theorems~1.10 and~1.11]{Wang}.

Our aim is to deduce analogous statements for $G_0^F$. Since $G$ and $G_0$
have the same Lie algebra, their generalized Whittaker models are built from
the same unipotent subgroups and characters. A rational $G^F$-orbit may
\rev{split} into two rational $G_0^F$-orbits, and the restriction of an
irreducible representation of $G^F$ may split into two irreducible
representations of $G_0^F$. To describe
$\sO(\sigma)^{\mx}$, we must understand the relation between these two splittings.

\subsection{From full to special orthogonal groups}

Consider the surjection
\[
\pr\colon\Nil^F/\Ad(G_0^F)\longrightarrow\Nil^F/\Ad(G^F),\qquad
\Ad(G_0^F)X\longmapsto\Ad(G^F)X,
\]
so that $\pr(\cO_0)$ is the rational $G^F$-orbit containing $\cO_0$. {For} a rational $G^F$-orbit $\cO$ with partition
$\lambda$,
\[
\#\pr^{-1}(\{\cO\})=
\begin{cases}
1,&\lambda\text{ is not very even},\\
2,&\lambda\text{ is very even}.
\end{cases}
\]
If $\lambda$ is very even, then $V$ is split, there is exactly one rational
$G^F$-orbit $\cO_\lambda$ with partition $\lambda$, and {we write
$\pr^{-1}(\{\cO_\lambda\})=\{\cO_\lambda^{\I},\cO_\lambda^{\II}\}$,
where $\I$ and $\II$ label the two rational $G_0^F$-orbits contained in
$\cO_\lambda$. Then}
\[
\bigl((\cO_\lambda^{\I})^{\st}\bigr)^F=\cO_\lambda^{\I},\qquad
\bigl((\cO_\lambda^{\II})^{\st}\bigr)^F=\cO_\lambda^{\II}.
\]
Every $\sigma\in\Irr(G_0^F)$ occurs in the restriction of some
$\pi\in\Irr(G^F)$. By Clifford theory,
\begin{equation}\label{eq:clifford}
\begin{aligned}
\pi\otimes\det\not\simeq\pi&\implies \ResG\pi=\sigma\ \text{is irreducible},\\
\pi\otimes\det\simeq\pi&\implies \ResG\pi=\sigma^{\I}\oplus\sigma^{\II},
\end{aligned}
\end{equation}
and in the second case $\sigma^{\I}\not\simeq\sigma^{\II}$.
The second case occurs only when $\dim V$ is even.

\subsection{Maximal rational orbits}

\begin{theorem}\label{thm:A}
For every $\sigma\in\Irr(G_0^F)$,
\begin{equation}\label{eq:max-equals-maxst}
\sO(\sigma)^{\mx}=\sO(\sigma)^{\mx,\st}.
\end{equation}
Let $\pi\in\Irr(G^F)$ be such that $\sigma$ occurs in $\ResG\pi$. Then
\begin{equation}\label{eq:max-formula}
\sO(\sigma)^{\mx}
=\bigcup_{\substack{\cO_0\in\Nil^F/\Ad(G_0^F)\\
\pr(\cO_0)\in\sO(\pi)^{\mx},\ \cO_0^{\st}=\cO_{\sigma}^{\st}}}\cO_0.
\end{equation}
Let $\lambda$ be the partition of $\cO_{\pi}^{\st}$. If $\lambda$ is not
very even, then $\pr$ {induces} a bijection
$\sO(\sigma)^{\mx}\xrightarrow{\sim}\sO(\pi)^{\mx}$. If $\lambda$ is very even,
then $\ResG\pi$ is reducible and, with a suitable labeling of its constituents,
\begin{equation}\label{eq:very-even-separation}
\sO(\sigma^{\I})^{\mx}=\cO_\lambda^{\I},\qquad
\sO(\sigma^{\II})^{\mx}=\cO_\lambda^{\II}.
\end{equation}
\end{theorem}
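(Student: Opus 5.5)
The plan is to compare generalized Whittaker models of $\sigma$ and $\pi$ through Frobenius reciprocity. Since $G$ and $G_0$ have the same Lie algebra, the generalized Gelfand--Graev representation of a $G^F$-orbit is induced from those of the $G_0^F$-orbits inside it:
\[
\Gamma^{G}_{\cO}=\IndG\Gamma^{G_0}_{\cO_0}
\qquad\text{for any }\cO_0\in\pr^{-1}(\{\cO\}),
\]
because $\Gamma^{G_0}_{\cO_0}$ and $\Gamma^{G}_{\cO}$ are both induced from the same pair $(U,\chi_X)$ attached to a representative $X$. Frobenius reciprocity then gives
\[
\Wh_{\cO}(\pi)\cong\Wh_{\cO_0}(\ResG\pi)=\bigoplus_{\sigma'\subset\ResG\pi}\Wh_{\cO_0}(\sigma').
\]
Consequently $\pr(\sO(\sigma))\subset\sO(\pi)$. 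Conversely, if $\cO\in\sO(\pi)$, some constituent of $\ResG\pi$ has a nonzero model at $\cO_0$.

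When $\ResG\pi=\sigma$ is irreducible, this yields $\sO(\sigma)=\pr^{-1}(\sO(\pi))$ exactly. When $\ResG\pi=\sigma^{\I}\oplus\sigma^{\II}$, conjugation by an element $g\in G^F\setminus G_0^F$ swaps $\sigma^{\I}$ and $\sigma^{\II}$. It also permutes the $G_0^F$-orbits in each fiber of $\pr$, swapping $\cO_\lambda^{\I}$ and $\cO_\lambda^{\II}$ for very even $\lambda$ and fixing the unique orbit otherwise. Hence $\Wh_{\cO_0}(\sigma^{\I})\cong\Wh_{g\cO_0}(\sigma^{\II})$.

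Next I relate the orders. Under the lemma that $\leqslant_{G_0}$ refines to $\leqslant_{G}$ via $\pr$ (Definition~\ref{def:order} is presumably built this way), and since $\cO^{\st}_0\subset\cO^{\st}$, the result of \cite[Theorem~1.1]{Wang} ($\sO(\pi)^{\mx}=\sO(\pi)^{\mx,\st}$, all with stable orbit $\cO^{\st}_\pi$) transfers as follows. Every element of $\sO(\sigma)$ lies over $\sO(\pi)$, hence lies below some element of $\sO(\pi)^{\mx}$. The Kawanaka orbit $\cO^{\st}_\sigma$ is a $G_0$-orbit inside $\cO_\pi^{\st}$, because $\sO(\pi)$ is a union of the $\pr$-images of the sets $\sO(\sigma')$ for the conjugate constituents, and these share the same stable orbit up to $g$-conjugation.

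\emph{Not very even case.} Here $\cO_\pi^{\st}$ is a single $G_0$-orbit and $g$ fixes each $G_0^F$-orbit over it. The displayed isomorphism then shows that $\sigma^{\I}$ and $\sigma^{\II}$ have nonzero models on the same orbits over $\cO^{\st}_\pi$. So $\pr$ restricts to a bijection from the orbits of $\sO(\sigma)$ lying over $\cO_\pi^{\st}$ onto $\sO(\pi)^{\mx}$. Maximality in $\leqslant_{G_0}$ and in $\leqslant_{\st}$ then both coincide with "lying over $\sO(\pi)^{\mx}$", which gives \eqref{eq:max-equals-maxst} and \eqref{eq:max-formula}.

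\emph{Very even case.} Here $\sO(\pi)^{\mx}=\cO_\lambda$ is a single orbit. First I show $\pi\otimes\det\simeq\pi$, for instance because $\pi\otimes\det$ has a different wavefront behaviour, or via Wang's explicit description of $\sO(\pi)^{\mx}$ in terms of symbols (degenerate symbols are exactly the ones giving very even $\lambda$). Then $\ResG\pi$ is reducible. Next I need that $\sigma^{\I}$ is supported on only one of $\cO^{\I}_\lambda$, $\cO^{\II}_\lambda$. I would compute
\[
\dim\Wh_{\cO_\lambda}(\pi)=\dim\Wh_{\cO^{\I}_\lambda}(\sigma^{\I})+\dim\Wh_{\cO^{\I}_\lambda}(\sigma^{\II})
\]
and compare with Wang's value of this dimension \cite[Theorem~1.9]{Wang}. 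Alternatively, I would use that the Kawanaka orbit of $\sigma^{\I}$ is a single $G_0$-stable orbit (by \cite{Lus92,Tay}), which forces exactly one of $(\cO^{\I}_\lambda)^{\st}$, $(\cO^{\II}_\lambda)^{\st}$. Since $\bigl((\cO^{\I}_\lambda)^{\st}\bigr)^F=\cO^{\I}_\lambda$, the orbit $\cO_\lambda^{\I}$ is the unique rational orbit in it. Labeling accordingly gives \eqref{eq:very-even-separation}, and $g$-conjugation gives the statement for $\sigma^{\II}$. Maximality in $\leqslant_{G_0}$ versus $\leqslant_{\st}$ follows because $\cO^{\I}_\lambda$ and $\cO^{\II}_\lambda$ are incomparable and everything else in $\sO(\sigma)$ lies strictly below them.

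The main obstacle is making sure $\leqslant_{G_0}$ interacts correctly with $\pr$. An element of $\sO(\sigma)$ whose image is non-maximal in $\sO(\pi)$ must be non-maximal in $\sO(\sigma)$; this requires lifting a relation $\pr(\cO_0)<_G\cO$ to some $\cO_0<_{G_0}\cO_1$ with $\cO_1\in\sO(\sigma)$. In the very even case this needs the right one of the two lifts, which I expect to handle using the uniqueness of the Kawanaka orbit for $\sigma$.
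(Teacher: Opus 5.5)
Your overall route is the paper's. It uses $\Gamma^G_{\cO}\simeq\IndG\Gamma^{G_0}_{\cO_0}$ with Frobenius reciprocity, conjugation by an element of $G^F\setminus G_0^F$ to compare the two constituents and the two very even orbits, lifting of strict inequalities through $\pr$, and uniqueness of the Kawanaka wavefront orbit. The not very even case is handled correctly.

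\textbf{The gap.} You do not establish that $\ResG\pi$ is reducible when $\lambda$ is very even. This is part of the statement, and you need it before $\sigma^{\I},\sigma^{\II}$ even exist.
\begin{itemize}
\item Your first suggested reason is wrong. Since $\det$ is trivial on the unipotent group $G_{\geqslant1.5}^F$, we have $\Wh_{\cO}(\pi\otimes\det)\simeq\Wh_{\cO}(\pi)$ for every $\cO$ (Proposition~\ref{prop:restriction}). So no wavefront invariant distinguishes $\pi$ from $\pi\otimes\det$.
\item Your second suggestion, via degenerate symbols, only covers unipotent $\pi$, whereas the theorem is for arbitrary $\sigma\in\Irr(G_0^F)$. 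For general $\pi$ you would have to follow the $\det$-twist through the Lusztig correspondence, which the proposal does not do.
\item Your dimension count does not help here either. If $\sigma=\ResG\pi$ were irreducible, you would simply get $\dim\Wh_{\cO_\lambda^{\I}}(\sigma)=\dim\Wh_{\cO_\lambda^{\II}}(\sigma)=\dim\Wh_{\cO_\lambda}(\pi)$, which is no contradiction.
\end{itemize}

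\textbf{The fix.} The missing argument uses only tools you already invoke; it is Proposition~\ref{prop:irred-max} in the paper. Suppose $\ResG\pi=\sigma$ were irreducible.
\begin{itemize}
\item Then $\sO(\sigma)=\pr^{-1}(\sO(\pi))$. It contains both $\cO_\lambda^{\I}$ and $\cO_\lambda^{\II}$, because $\cO_\lambda$ is the unique rational $G^F$-orbit with partition $\lambda$ and lies in $\sO(\pi)$.
\item Every orbit of $\sO(\sigma)$ has partition at most $\lambda$.
\item The two stable orbits $(\cO_\lambda^{\I})^{\st}\neq(\cO_\lambda^{\II})^{\st}$ have the same dimension, so both would be maximal among the stable orbits meeting $\sO(\sigma)$. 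This contradicts the uniqueness of the Kawanaka orbit.
\end{itemize}

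Once reducibility is established this way, the rest of your very even argument goes through by either of your two routes. One is Kawanaka uniqueness, as in Proposition~\ref{prop:separation}. The other is your dimension count: Wang's formula with $|A_{\mathrm O}|=|\sO(\pi)^{\mx}|=1$ forces $\dim\Wh_{\cO_\lambda}(\pi)=1$ (Corollary~\ref{cor:very-even-one}). Hence exactly one of $\Wh_{\cO_\lambda^{\I}}(\sigma^{\I})$ and $\Wh_{\cO_\lambda^{\I}}(\sigma^{\II})$ is nonzero. The dimension count is a slightly more elementary route to the separation than the paper's, but it cannot replace Kawanaka uniqueness in the reducibility step.
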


\rev{Since $\sO(\pi)^{\mx}$ is determined explicitly in \cite{Wang},
formula \eqref{eq:max-formula} determines $\sO(\sigma)^{\mx}$ once the
Kawanaka wavefront orbit of $\sigma$ is known. In the very even case the
Kawanaka wavefront orbit decides which of $\cO_\lambda^{\I}$ and
$\cO_\lambda^{\II}$ lies in $\sO(\sigma)^{\mx}$, and a labeling satisfying
\eqref{eq:very-even-separation} exists by Proposition~\ref{prop:separation}.}

\begin{theorem}\label{thm:B}
For every $\sigma\in\Irr(G_0^F)$,
\[
\sO(\sigma)=\overline{\sO(\sigma)^{\mx}}.
\]
\end{theorem}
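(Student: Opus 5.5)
Since the generalized Gelfand--Graev representations for $G^F$ and $G_0^F$ are built from the same unipotent subgroups and characters, the plan is to transport the closure statement $\sO(\pi)=\overline{\sO(\pi)^{\mx}}$ of \cite[Theorem~1.1]{Wang} from $G^F$ to $G_0^F$ via Frobenius reciprocity. The inclusion $\overline{\sO(\sigma)^{\mx}}\subset\sO(\sigma)$ will come from the general monotonicity of generalized Whittaker models under the rational order of Definition~\ref{def:order}. Concretely, I will show: if $\Wh_{\cO_1}(\sigma)\neq0$ and $\cO_0\leqslant_{G_0}\cO_1$, then $\Wh_{\cO_0}(\sigma)\neq0$. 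This should be built into the definition of $\leqslant_{G_0}$, which is modeled on \cite[Section~3.2]{Wang}, where the analogous property for $G^F$ is used to define $\leqslant_G$. The proof is the same degeneration/induction-in-stages argument, applied to a $G_0^F$-conjugate of the relevant nilpotent element and Lie algebra data. The nontrivial direction is therefore $\sO(\sigma)\subset\overline{\sO(\sigma)^{\mx}}$.

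For that direction, take $\cO_0\in\sO(\sigma)$ and let $\pi$ be such that $\sigma\subset\ResG\pi$. First I compare Whittaker models. Because $\Gamma^{G}_{\pr(\cO_0)}=\IndG\Gamma^{G_0}_{\cO_0}$ (same unipotent data), Frobenius reciprocity gives
\[
\Wh_{\pr(\cO_0)}(\pi)\cong\Wh_{\cO_0}(\ResG\pi).
\]
Hence $\Wh_{\cO_0}(\sigma)\neq0$ implies $\pr(\cO_0)\in\sO(\pi)$. By \cite[Theorem~1.1]{Wang}, there is then $\cO\in\sO(\pi)^{\mx}$ with $\pr(\cO_0)\leqslant_G\cO$. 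Next I lift this relation. Using the compatibility of $\leqslant_{G_0}$ with $\leqslant_G$ under $\pr$ (a lemma relating Definition~\ref{def:order} to \cite[Section~3.2]{Wang}), I want some $\cO_1\in\pr^{-1}(\{\cO\})$ with $\cO_0\leqslant_{G_0}\cO_1$ and $\cO_1\in\sO(\sigma)^{\mx}$. Theorem~\ref{thm:A} describes $\sO(\sigma)^{\mx}$ as exactly those $\cO_1$ with $\pr(\cO_1)\in\sO(\pi)^{\mx}$ and $\cO_1^{\st}=\cO_\sigma^{\st}$.

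If the partition $\lambda$ of $\cO_\pi^{\st}$ is not very even, $\pr^{-1}(\{\cO\})$ is a single orbit and $\pr$ is a bijection on maximal sets, so the lift is immediate. This assumes that $\pr(\cO_0)\leqslant_G\pr(\cO_1)$ with $\cO_1$ non-split implies $\cO_0\leqslant_{G_0}\cO_1$, which I expect to follow from the definition of the order.

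The main obstacle is the very even case. There $\sO(\sigma^{\I})^{\mx}=\cO_\lambda^{\I}$ only, and I must show that every $\cO_0\in\sO(\sigma^{\I})$ satisfies $\cO_0\leqslant_{G_0}\cO_\lambda^{\I}$. A priori $\cO_0$ might lie only below $\cO_\lambda^{\II}$. The approach splits into two subcases.
\begin{enumerate}[label=(\alph*)]
\item If $\cO_0^{\st}$ is strictly smaller than $\cO_\lambda^{\st}$ and not very even, I will use the classical fact that a non-very-even orbit in the closure of one very even $G_0$-orbit lies in the closure of both (\cite[Section~6.2]{CM}). I then need a rational refinement showing $\cO_0\leqslant_{G_0}\cO_\lambda^{\I}$ and $\cO_0\leqslant_{G_0}\cO_\lambda^{\II}$ simultaneously.
\item If $\cO_0^{\st}$ is itself very even with label $\II$-type relative to $\lambda$, I will argue by contradiction. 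Suppose $\cO_0$ is not below $\cO_\lambda^{\I}$. Applying the $\det$-twist, i.e.\ conjugation by an element of $G^F\setminus G_0^F$, swaps $\I\leftrightarrow\II$ on both representations and orbits. Combining this with Proposition~\ref{prop:separation}, I would then produce a maximal element of $\sO(\sigma^{\I})$ other than $\cO_\lambda^{\I}$, contradicting Theorem~\ref{thm:A}.
\end{enumerate}
Finally, for $\sigma=\ResG\pi$ irreducible with $\lambda$ very even, Theorem~\ref{thm:A} gives both labels as maximal, and the lift works for either one.
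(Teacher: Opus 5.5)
\textbf{There is a genuine gap.} You have the difficulty of the two inclusions reversed, so the actual content of the theorem is left unproved.

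The inclusion $\sO(\sigma)\subset\overline{\sO(\sigma)^{\mx}}$ needs nothing. By definition, $\sO(\sigma)^{\mx}$ is the set of maximal elements of the finite poset $(\sO(\sigma),\leqslant_{G_0})$, so every element of $\sO(\sigma)$ lies below one of them (Corollary~\ref{cor:inclusion}). In particular your subcases (a) and (b) are superfluous. In the very even case $\sO(\sigma^{\I})^{\mx}$ is the single orbit $\cO_\lambda^{\I}$, so no element of $\sO(\sigma^{\I})$ can lie only below $\cO_\lambda^{\II}$. Your final sentence also treats a case that cannot occur: by Proposition~\ref{prop:irred-max}, an irreducible restriction forces the partition of $\cO_\pi^{\st}$ to be not very even.

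The real content is $\overline{\sO(\sigma)^{\mx}}\subset\sO(\sigma)$, and it is not built into Definition~\ref{def:order}. The order $\leqslant_{G_0}$, like $\leqslant_G$, is defined without any reference to Whittaker models. The corresponding closure property for $G^F$ is a theorem of \cite{Wang}, not a feature of the definition of $\leqslant_G$. No general degeneration argument along arbitrary pairs $\cO_1\leqslant_{G_0}\cO_2$ is available.

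From \cite[Theorem~1.1]{Wang} and Frobenius reciprocity you only get $\Wh_{\pr(\cO_1)}(\pi)\neq0$ when $\cO_1<_{G_0}\cO_2\in\sO(\sigma)^{\mx}$. This suffices if $\ResG\pi$ is irreducible. Corollary~\ref{cor:half} handles a reducible restriction when the partition of $\cO_1$ is not very even.

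The remaining case is the crux. Suppose $\ResG\pi=\sigma^{\I}\oplus\sigma^{\II}$ and $\cO_1=\cO_\mu^{\iota}$ with $\mu$ very even. Then $\Wh_{\cO_\mu}(\pi)\neq0$ only says that at least one of $\Wh_{\cO_\mu^{\iota}}(\sigma^{\I})$ and $\Wh_{\cO_\mu^{\iota}}(\sigma^{\II})$ is nonzero. Conjugation by $c$ swaps the constituents and the labels simultaneously, so it cannot decide which. At the maximal very even orbit exactly one constituent does have a nonzero model (Proposition~\ref{prop:separation}), so this is a real obstruction.

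The paper closes this gap with Theorem~\ref{thm:nonvanishing}:
\begin{enumerate}[label=(\roman*)]
\item Lemma~\ref{lem:intermediate} produces a non-very-even orbit $\cO_\nu$, obtained from $\mu$ by replacing two parts $a,a$ by $a+1,a-1$, with $\cO_\mu<_G\pr(\cO_\nu)\leqslant_G\pr(\cO_2)$.
\item By Corollary~\ref{cor:half}, both constituents then have nonzero models at $\cO_\nu$.
\item A Heisenberg/Stone--von Neumann comparison (Lemma~\ref{lem:comparison}) gives surjections $\Gamma^{G_0}_{\cO_\mu^{\iota}}\twoheadrightarrow\Gamma^{G_0}_{\cO_\nu}$ for both labels $\iota$, whence $\Wh_{\cO_\nu}(\sigma)\hookrightarrow\Wh_{\cO_\mu^{\iota}}(\sigma)$.
\end{enumerate}
Your proposal has no substitute for this step.
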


\subsection{Component groups and Whittaker multiplicities}

For $X\in\Nil$, write $\cO^{\st}=\Ad(G_0(\kb))X$ and
$\widetilde{\cO}^{\st}=\Ad(G(\kb))X$. Define the component groups by
\[
A_{\mathrm{SO}}(\cO^{\st}):=C_{G_0}(X)/C_{G_0}(X)^\circ,\qquad
A_{\mathrm{O}}(\widetilde{\cO}^{\st}):=C_G(X)/C_G(X)^\circ.
\]
When both groups are taken at the same $X$, we drop the tilde and write
$A_{\mathrm{O}}(\cO^{\st})$, as in \cite[Section~3.1]{Wang}. Since
$C_{G_0}(X)^\circ=C_G(X)^\circ$, the inclusion induces an injection
$A_{\mathrm{SO}}(\cO^{\st})\hookrightarrow A_{\mathrm{O}}(\cO^{\st})$. The
determinant is trivial on $C_G(X)^\circ$, so it induces a homomorphism
\[
\det\colon A_{\mathrm{O}}(\cO^{\st})\longrightarrow\{\pm1\},\qquad
gC_G(X)^\circ\longmapsto\det(g),
\]
and $A_{\mathrm{SO}}(\cO^{\st})=\ker(\det)$.
\begingroup
Let $\bA_{\mathrm O}(\cO^{\st})$ and
\[
q_{\mathrm O}\colon A_{\mathrm O}(\cO^{\st})\twoheadrightarrow
\bA_{\mathrm O}(\cO^{\st})
\]
be the quotient and quotient map specified in Section~\ref{subsec:coords}. For the partition $\lambda$ of $X$, we have
\[
\begin{cases}
[A_{\mathrm O}:A_{\mathrm{SO}}]=2,&\lambda\text{ is not very even},\\
A_{\mathrm O}=A_{\mathrm{SO}}=\bA_{\mathrm O}=1,&\lambda\text{ is very even}.
\end{cases}
\]
\endgroup

Let $\sigma\in\Irr(G_0^F)$, and choose $\pi\in\Irr(G^F)$ whose restriction
contains $\sigma$.
\begin{revision}%
Let $G^*$ be the dual group with Frobenius structure as in
\cite[Section~2.3]{Wang}, and let $s\in G^{*F}$ be the semisimple
parameter of $\pi$. Let
$\pi^*=\Lcan(\pi)$ be the unipotent representation of $C_{G^*}(s)^F$
attached to $\pi$ by the {Lusztig correspondence defined in}
\cite[Theorem~2.2]{Wang}. Put
\[
\cO^{\st}=\cO_{\sigma}^{\st},\qquad
\cO^{\st*}=\cO_{\pi^*}^{\st},
\]
where $\cO^{\st*}$ is the {stable} wavefront orbit of $\pi^*$ in
$\Lie C_{G^*}(s)$. In the following formula,
$\bA(\cO^{\st*})$ denotes the dual-side canonical quotient in
\cite[Theorem~1.9]{Wang}.
\end{revision}

\begin{theorem}\label{thm:C}
For every $\cO_0\in\sO(\sigma)^{\mx}$,
\begin{equation}\label{eq:Wh-dim-SO}
\dim\Wh_{\cO_0}(\sigma)=
\begin{cases}
\dfrac{2\,|A_{\mathrm{SO}}(\cO^{\st})|}
      {|\sO(\sigma)^{\mx}|\,|\bA(\cO^{\st*})|},
 & \dim V\text{ even and }\ResG\pi\text{ irreducible},\\[3ex]
\dfrac{|A_{\mathrm{SO}}(\cO^{\st})|}
      {|\sO(\sigma)^{\mx}|\,|\bA(\cO^{\st*})|},
 & \text{otherwise}.
\end{cases}
\end{equation}
This dimension is a power of $2$ and does not depend on
$\cO_0\in\sO(\sigma)^{\mx}$. If the partition of $\cO^{\st}$ is very even, the
dimension equals $1$.
\end{theorem}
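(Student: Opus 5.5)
We will deduce Theorem~\ref{thm:C} from the Whittaker dimension formula for $G^F$ in \cite[Theorem~1.9]{Wang} via Frobenius reciprocity. The starting point is that $G$ and $G_0$ have the same Lie algebra, so the generalized Gelfand--Graev representations are built from the same unipotent subgroups and characters. For $X\in\cO_0$ this gives
\[
\Gamma^{G}_{\pr(\cO_0)}\cong\IndG\Gamma^{G_0}_{\cO_0},
\]
up to replacing $\cO_0$ by its $G^F$-conjugate. Conjugating by $g\in G^F\setminus G_0^F$ carries $\Gamma^{G_0}_{\cO_0}$ to $\Gamma^{G_0}_{{}^g\cO_0}$, and ${}^g\cO_0$ equals $\cO_0$ unless $\lambda$ is very even. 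Frobenius reciprocity then gives
\[
\dim\Wh_{\pr(\cO_0)}(\pi)=\sum_{\sigma'\subset\ResG\pi}\dim\Wh_{\cO_0}(\sigma').
\]

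\textbf{Case 1: $\ResG\pi=\sigma$ irreducible.} Here
\[
\dim\Wh_{\cO_0}(\sigma)=\dim\Wh_{\pr(\cO_0)}(\pi),
\]
and Theorem~\ref{thm:A} shows that $\pr$ is a bijection on maximal sets. Hence $|\sO(\sigma)^{\mx}|=|\sO(\pi)^{\mx}|$. (If $\lambda$ is very even, $\ResG\pi$ is reducible by Theorem~\ref{thm:A}, so $\lambda$ is not very even in this case.) We substitute Wang's formula, which has the shape
\[
\frac{|A_{\mathrm O}(\cO^{\st})|}{|\sO(\pi)^{\mx}|\,|\bA(\cO^{\st*})|},
\]
possibly with a parity-dependent factor. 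This has to be checked against \cite[Theorem~1.9]{Wang}, and it is the main bookkeeping obstacle. Since $[A_{\mathrm O}:A_{\mathrm{SO}}]=2$, this yields the factor $2|A_{\mathrm{SO}}|$.

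When $\dim V$ is odd, $-1\in G^F\setminus G_0^F$ is central, so $G^F=G_0^F\times\{\pm1\}$. Then $\pi$ is $\sigma$ extended by a sign, and one must check that the dual-side quotient $\bA(\cO^{\st*})$ for $G$ absorbs the factor $2$ there. Concretely, we will reconcile the normalisations of $\bA(\cO^{\st*})$ for odd orthogonal groups in \cite{Wang} with the displayed formula. This is the step I expect to need the most care.

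\textbf{Case 2: $\ResG\pi=\sigma^{\I}\oplus\sigma^{\II}$.} The element $g$ swaps $\sigma^{\I}$ and $\sigma^{\II}$.

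If $\lambda$ is not very even, $g$ fixes $\cO_0$, so
\[
\dim\Wh_{\cO_0}(\sigma^{\I})=\dim\Wh_{\cO_0}(\sigma^{\II}).
\]
Each is therefore half of the $G^F$ value. Also $\sO(\sigma^{\I})^{\mx}$ and $\sO(\pi)^{\mx}$ have the same size, again because $\pr$ is a bijection. Halving $2|A_{\mathrm{SO}}|$ gives the "otherwise" line.

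If $\lambda$ is very even, Theorem~\ref{thm:A} gives $\sO(\sigma^{\I})^{\mx}=\cO^{\I}_\lambda$, so $\sigma^{\II}$ has no Whittaker model on $\cO^{\I}_\lambda$. The full $G^F$ dimension is then carried by $\sigma^{\I}$. Wang's formula gives $1$ there, because $A_{\mathrm O}=1$ and $|\sO(\pi)^{\mx}|=1$. This matches $|A_{\mathrm{SO}}|=1$ and $|\sO(\sigma)^{\mx}|=1$, and it also settles the very even clause.

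\textbf{Remaining claims.} Independence of $\cO_0$ follows from the same statement for $\pi$ in \cite{Wang}, since our formula depends only on stable data. The power-of-$2$ claim follows because all the groups involved are elementary abelian $2$-groups and the dimension is an integer.
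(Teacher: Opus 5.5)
Your structure matches the paper's proof. You write $\Gamma^G_{\pr(\cO_0)}\simeq\IndG\Gamma^{G_0}_{\cO_0}$ and split by irreducible versus reducible restriction and by very even versus not. You get $|\sO(\sigma)^{\mx}|=|\sO(\pi)^{\mx}|$ from Theorem~\ref{thm:A}, and you halve via $\Ad(c)$-invariance in the reducible, non-very-even case.

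\textbf{Main gap: the odd-dimensional case.} The substance of the proof is the substitution into \cite[Theorem~1.9]{Wang}, and you leave that open at the one point where it matters. In odd dimension the restriction is irreducible, and $\lambda$ has an odd part, so $|A_{\mathrm O}|=2|A_{\mathrm{SO}}|$. You propose to show that $\bA(\cO^{\st*})$ ``absorbs the factor $2$'' by reconciling normalisations. That is not what happens, and there is nothing to reconcile: Theorem~\ref{thm:C} uses by definition the same dual-side quotient as \cite[Theorem~1.9]{Wang}.

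The factor $2$ is explicit in Wang's odd case,
\[
\dim\Wh_{\cO}(\pi)=\frac{|A_{\mathrm O}(\cO^{\st})|}{2\,|\sO(\pi)^{\mx}|\,|\bA(\cO^{\st*})|},
\]
while the even case has no such factor. Combine this with $\dim\Wh_{\cO_0}(\sigma)=\dim\Wh_{\pr(\cO_0)}(\pi)$ and $|A_{\mathrm O}|=2|A_{\mathrm{SO}}|$. In odd dimension this gives the second line of \eqref{eq:Wh-dim-SO} directly, and in even dimension with irreducible restriction it gives the first line. Without this input your Case~1 cannot tell the two lines apart.

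\textbf{Smaller gap: the very even value.} With $A_{\mathrm O}=1$ and $|\sO(\pi)^{\mx}|=1$, Wang's formula gives $1/|\bA(\cO^{\st*})|$, not $1$. You need one more observation, as in Corollary~\ref{cor:very-even-one}. Since $\cO_\lambda\in\sO(\pi)^{\mx}$, this quantity is a positive integer, which forces $|\bA(\cO^{\st*})|=1$. Only then does the very even clause follow.

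Everything else is fine. Your power-of-two argument is a valid alternative to the paper's: the numerator is a power of $2$ (as $A_{\mathrm{SO}}$ is a subgroup of the elementary abelian $2$-group $A_{\mathrm O}$) and the dimension is a positive integer. The paper instead notes that Wang's value is a power of $2$ and is either kept or halved.
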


For comparison, the formula for the full orthogonal group
\cite[Theorem~1.9]{Wang} reads, for $\cO\in\sO(\pi)^{\mx}$,
\begin{equation}\label{eq:Wh-dim-O}
\dim\Wh_{\cO}(\pi)=
\begin{cases}
\dfrac{|A_{\mathrm{O}}(\cO^{\st})|}
      {|\sO(\pi)^{\mx}|\,|\bA(\cO^{\st*})|},&\dim V\text{ even},\\[3ex]
\dfrac{|A_{\mathrm{O}}(\cO^{\st})|}
      {2\,|\sO(\pi)^{\mx}|\,|\bA(\cO^{\st*})|},&\dim V\text{ odd}.
\end{cases}
\end{equation}
\rev{The factor $2$ in the first case of \eqref{eq:Wh-dim-SO} compensates for
the index $[A_{\mathrm{O}}(\cO^{\st}):A_{\mathrm{SO}}(\cO^{\st})]=2$. Indeed, in
that case $\sigma=\ResG\pi$, so $\sigma$ and $\pi$ have the same Whittaker
dimensions; moreover, the partition of $\cO^{\st}$ is not very even by
Theorem~\ref{thm:A}, so the index is $2$.}

\subsection{Unipotent representations}

Let $\cO^{\st}$ be a stable nilpotent orbit of $G_0$. We use
the full orthogonal component group $A_{\mathrm{O}}(\cO^{\st})$ and canonical
quotient $\bA_{\mathrm{O}}(\cO^{\st})$ of \cite[Section~3.1]{Wang}. In
Section~\ref{subsec:coords} we define
\[
\cS\colon\{\cO_0\in\Nil^F/\Ad(G_0^F):\cO_0\subset(\cO^{\st})^F\}\longrightarrow
A_{\mathrm{O}}(\cO^{\st})\qquad\text{and}\quad \bS=q_{\mathrm{O}}\circ\cS.
\]
\rev{If the partition of $\cO^{\st}$ is very even, the domain of $\cS$ has a
single element, namely $(\cO^{\st})^F$.}
For $x\in\bA_{\mathrm{O}}(\cO^{\st})$ we set
\[
\sO(x)=\bigcup_{\substack{\cO_0\in\Nil^F/\Ad(G_0^F)\\
\cO_0\subset(\cO^{\st})^F,\ \bS(\cO_0)=x}}\cO_0.
\]

\begin{theorem}\label{thm:D}
Let $\sigma$ be an irreducible unipotent representation of $G_0^F$, and put
$\cO^{\st}=\cO_{\sigma}^{\st}$. Then
\[
\sO(\sigma)^{\mx}=\sO(x)\quad\text{for some }x\in\bA_{\mathrm{O}}(\cO^{\st}),
\]
and $\sO(\sigma)^{\mx}$ does not depend on $\psi$.
\rev{If $\disc(V)=+$ and $\sigma$ is the special member of its family, in the
sense of Section~\ref{subsec:families}, then}
\[
\cS\bigl(\sO(\sigma)^{\mx}\bigr)=\ker q_{\mathrm{O}}.
\]
\end{theorem}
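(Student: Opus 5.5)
The plan is to reduce Theorem~\ref{thm:D} to the corresponding statement for $\mathrm O(V)$, \cite[Theorems~1.10 and~1.11]{Wang}, using Theorem~\ref{thm:A} and the compatibility of $\cS$ with the coordinate map used in \cite{Wang}.

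\emph{Step 1: lifting $\sigma$.} Choose a unipotent $\pi\in\Irr(G^F)$ whose restriction contains $\sigma$. This is possible because the unipotent representations of $G_0^F$ are exactly the constituents of restrictions of unipotent representations of $G^F$ (Deligne--Lusztig characters restrict compatibly, since $G$ and $G_0$ have the same maximal tori). By \cite[Theorem~1.10]{Wang}, $\sO(\pi)^{\mx}$ is the fiber of $\bar q_{\mathrm O}\circ\cS_{\mathrm O}$ over some $\tilde x\in\bA_{\mathrm O}(\cO_\pi^{\st})$. Here $\cS_{\mathrm O}$ denotes the coordinate map of \cite{Wang} on rational $G^F$-orbits in $(\cO_\pi^{\st})^F$, and the fiber does not depend on $\psi$.

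\emph{Step 2: not very even case.} Suppose the partition $\lambda$ of $\cO^{\st}_\pi$ is not very even. Then $\pr$ is a bijection on rational orbits inside $(\cO^{\st})^F$, and $\cO_\sigma^{\st}$ and $\cO_\pi^{\st}$ coincide as sets. I expect that the definition of $\cS$ in Section~\ref{subsec:coords} is arranged so that $\cS=\cS_{\mathrm O}\circ\pr$. This is the compatibility to verify: a rational $G_0^F$-orbit in $(\cO^{\st})^F$ corresponds to an $F$-conjugacy class in $A_{\mathrm{SO}}$, and it must map to the $F$-class in $A_{\mathrm O}$ of the same element. Since $F$ acts trivially on these elementary abelian $2$-groups, this should be a direct check. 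Then $\bS=\bar q_{\mathrm O}\circ\cS_{\mathrm O}\circ\pr$. By Theorem~\ref{thm:A}, $\sO(\sigma)^{\mx}=\pr^{-1}(\sO(\pi)^{\mx})$, and this equals $\sO(\tilde x)$ with $x=\tilde x$.

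\emph{Step 3: very even case.} If $\lambda$ is very even, then $\bA_{\mathrm O}$ is trivial. By \eqref{eq:very-even-separation}, $\sO(\sigma)^{\mx}$ is the single orbit $\cO_\lambda^{\I}$ or $\cO_\lambda^{\II}$, which is all of $(\cO_\sigma^{\st})^F$. Hence $\sO(\sigma)^{\mx}=\sO(1)$.

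\emph{Independence of $\psi$.} In the not very even case, independence of $\psi$ follows from that for $\pi$. In the very even case, $\sO(\sigma)^{\mx}=(\cO_\sigma^{\st})^F$ is determined by the Kawanaka wavefront orbit. That orbit is defined via the stable closure order and the unipotent support (Lusztig, Taylor), so it is independent of $\psi$. One subtlety is that the labeling $\sigma^{\I},\sigma^{\II}$ might a priori depend on $\psi$; using $\cO_\sigma^{\st}$ rather than the labels avoids this.

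\emph{Last assertion.} Assume $\disc(V)=+$ and $\sigma$ is special. The main obstacle is to choose $\pi$ to be the special member of its $\mathrm O$-family, so that \cite[Theorem~1.11]{Wang} (or its special-representation statement) gives $\cS_{\mathrm O}(\sO(\pi)^{\mx})=\ker\bar q_{\mathrm O}$. This requires the notion of special member in Section~\ref{subsec:families} to be compatible with restriction. I would check this via symbols: restriction either identifies $\pi$ and $\pi\otimes\det$ (swapping the symbol's rows) or splits degenerate symbols. Specialness is preserved in both cases, and a special $\sigma$ lifts to a special $\pi$. Applying Step 2 then gives $\cS(\sO(\sigma)^{\mx})=\ker q_{\mathrm O}$. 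In the very even case both sides are trivial, since $A_{\mathrm O}=1$.
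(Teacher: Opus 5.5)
Your proposal is correct and follows essentially the paper's route. You lift to a unipotent $\pi$ and use Theorem~\ref{thm:A} to get $\sO(\sigma)^{\mx}=\pr^{-1}(\sO(\pi)^{\mx})$ in the not very even case, then pull back \cite[Theorem~1.10]{Wang} (with \cite[Theorem~9.2]{Wang} for $\psi$-independence); the very even case is trivial because $A_{\mathrm O}=1$ and $(\cO^{\st})^F$ is a single rational orbit. The two points you flag as needing verification are definitions in the paper: $\cS$ is defined as $\cS_{\mathrm O}\circ\pr$, and the special member is defined as a constituent of the restriction of the representation attached to the special symbol, so no symbol-level check is required (and the special-member statement you need is in \cite[Theorem~1.10]{Wang}, not~1.11).
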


For a family $\cF$ of unipotent representations of $G_0^F$, let
$\cO^{\st}_{\cF}$ denote its stable unipotent support, viewed as a nilpotent orbit \rev{by means of the Cayley map fixed in
Section~\ref{sec:models}}. Let $D_{G_0^F}$ be
Alvis--Curtis duality on virtual characters. For irreducible $\sigma$, let
$\sigma'$ be the irreducible representation such that
$D_{G_0^F}(\sigma)=\varepsilon_\sigma\sigma'$, $\varepsilon_\sigma\in\{\pm1\}$.
Choose a unipotent $\pi\in\Irr(G^F)$ whose restriction contains $\sigma$, let
$\tF$ be the family of $\pi$, and let $(x^+,x^-)$ be the parameter of $\pi$ as
in \cite[Section~2.6]{Wang}. \rev{Section~\ref{subsec:families} explains how these parameters pass to
$G_0^F$, and Section~\ref{subsec:AC} recalls the map $\Phi_{\tF}$ of
\cite[(9.52)]{Wang}.}

\begin{theorem}\label{thm:E}
Let $\sigma\in\cF$ be an irreducible unipotent representation of $G_0^F$ with
irreducible Alvis--Curtis dual $\sigma'$, and let $\pi$, $\tF$ and $(x^+,x^-)$
be as above. If the partition of $\cO^{\st}_{\cF}$ is not very even, then
\begin{equation}\label{eq:AC}
\sO(\sigma')^{\mx}=\sO\bigl(\Phi_{\tF}(x^+)\bigr),
\end{equation}
where the fiber is taken with respect to $\bS$ inside $\cO^{\st}_{\cF}$; the
right-hand side does not depend on the choice of $\pi$. If the partition is very
even, then $\sO(\sigma')^{\mx}=(\cO^{\st}_{\cF})^F$.
\end{theorem}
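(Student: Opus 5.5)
The plan is to reduce Theorem~\ref{thm:E} to the corresponding statement for $G^F$, \cite[Theorem~1.11]{Wang}, through Theorem~\ref{thm:A} and Theorem~\ref{thm:D}. The basic input is that Alvis--Curtis duality commutes with restriction from $G^F$ to $G_0^F$. Both $D_{G^F}$ and $D_{G_0^F}$ are alternating sums of Harish-Chandra truncations over $F$-stable parabolic subgroups. Since $G_0$ is the identity component of $G$, the $F$-stable parabolics of $G_0$ and of $G$ correspond, and their Levi factors are related by intersection with $G_0$. Mackey's formula for Harish-Chandra induction and restriction should therefore give
\[
\ResG D_{G^F}(\pi)=D_{G_0^F}(\ResG\pi),
\]
up to the sign convention fixed in Section~\ref{subsec:AC}. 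Here $D_{G^F}(\pi)=\pm\pi'$, where $\pi'$ is unipotent with parameter attached to $\Phi_{\tF}(x^+)$ as in \cite[(9.52)]{Wang}. It follows that $\sigma'$ is an irreducible constituent of $\ResG\pi'$.

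\textbf{Non-very-even case.} Suppose the partition of $\cO^{\st}_{\cF}$ is not very even. By \cite[Theorem~1.11]{Wang}, $\sO(\pi')^{\mx}$ is the $G^F$-fiber over $\Phi_{\tF}(x^+)$ inside $\cO^{\st}_{\tF}$. The stable orbit $\cO^{\st}_{\tF}$ agrees with $\cO^{\st}_{\cF}$, since the partition is not very even and a $G$-stable orbit is then a single $G_0$-stable orbit. By the bijection case of Theorem~\ref{thm:A}, $\pr$ identifies $\sO(\sigma')^{\mx}$ with $\sO(\pi')^{\mx}$, and the Kawanaka orbit $\cO_{\sigma'}^{\st}$ is the unique $G_0$-orbit over $\cO_{\pi'}^{\st}$.

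It remains to compare the two fibers. The map $\cS$ of Section~\ref{subsec:coords} is built from the full orthogonal coordinates of \cite{Wang}, so a rational $G_0^F$-orbit $\cO_0$ satisfies $\bS(\cO_0)=x$ exactly when $\pr(\cO_0)$ has $G^F$-coordinate $x$. Hence $\pr^{-1}$ of the $G^F$-fiber is $\sO(\Phi_{\tF}(x^+))$, which proves \eqref{eq:AC}.

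For independence of $\pi$, the other choice is $\pi\otimes\det$. I expect its parameter to be $(x^-,x^+)$ or a twist thereof. Both choices produce the same $\sigma'$, and Theorem~\ref{thm:A} computes $\sO(\sigma')^{\mx}$ intrinsically, so the right-hand side cannot depend on the choice. One can also check this directly from the description of $\Phi_{\tF}$.

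\textbf{Very even case.} Here $\cO^{\st}_{\cF}$ is one of the two $G_0$-stable orbits over $\lambda$. Alvis--Curtis duality preserves families up to the duality on special orbits, so the Kawanaka orbit of $\sigma'$ is again the relevant stable orbit, which I identify with $\cO^{\st}_{\cF}$ by the convention of Section~\ref{subsec:families}. Theorem~\ref{thm:D} then gives $\sO(\sigma')^{\mx}=\sO(x)$ with $x\in\bA_{\mathrm O}=1$. The domain of $\cS$ is the single orbit $(\cO^{\st}_{\cF})^F$, so $\sO(\sigma')^{\mx}=(\cO^{\st}_{\cF})^F$.

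\textbf{Main obstacle.} The hardest step is the very even labeling: showing that duality sends the constituent attached to $\cO_\lambda^{\I}$ to one whose wavefront orbit is the $G_0$-stable orbit called $\cO^{\st}_{\cF}$, and not the other one. I would first try to settle this using Proposition~\ref{prop:separation} together with the fact that $D_{G_0^F}$ commutes with conjugation by an element of $G^F\setminus G_0^F$. That conjugation swaps $\I$ and $\II$ on both the representation side and the orbit side, so consistency is forced once it holds for a single member of the family, for instance the special one.
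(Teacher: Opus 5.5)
Your proposal is correct and follows essentially the paper's route. You first get $\sigma'\hookrightarrow\ResG\pi'$, which the paper cites from Lusztig rather than deriving via Mackey. In the non-very-even case you then use $\sO(\sigma')^{\mx}=\pr^{-1}(\sO(\pi')^{\mx})$ together with \cite[Theorem~1.11]{Wang} and $\cS=\cS_{\mathrm O}\circ\pr$. In the very even case you use the triviality of $\bA_{\mathrm O}$ and the fact that the domain of $\cS$ is a single orbit (Lemma~\ref{lem:split}).

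Two small corrections. First, your formal argument for independence of $\pi$ is valid: the equality holds for every lift and its left side depends only on $\sigma$. However, your guess for the parameter of $\pi\otimes\det$ is wrong. It is $(x^+,-x^-)$ in even dimension, and in odd dimension only $\eta$ changes; the paper's direct check uses exactly this. Second, the very even ``labeling obstacle'' does not arise. The identity $\cO_{\sigma'}^{\st}=\cO^{\st}_{\cF}$ is the intrinsic statement of \cite[Lemma~14.15]{Tay} for the connected group $G_0$, not a convention, so no consistency argument with conjugation by $c$ is needed.
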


Applying Theorem~\ref{thm:E} to $\sigma'$ in place of $\sigma$ gives the
corresponding formula for $\sO(\sigma)^{\mx}$.

\subsection{Outline}

In Section~\ref{sec:orbits}, we recall the classification of rational
nilpotent orbits for special orthogonal groups and introduce the
rational partial order on these orbits.
In Section~\ref{sec:models}, we study how generalized Whittaker
models behave when representations of full orthogonal groups
are restricted to special orthogonal groups, including the
effect of conjugation by elements outside the latter.
In Section~\ref{sec:max}, we determine the maximal rational
wavefront sets and compute the corresponding Whittaker
multiplicities, with particular attention to the very even case.
{
In Section~\ref{sec:closure}, we prove that the rational
wavefront set of an irreducible representation coincides with
the rational closure of its maximal rational wavefront set.
Finally, in Section~\ref{sec:unipotent}, we use Lusztig families and
canonical quotients to give a more precise description of
the maximal rational wavefront sets of unipotent representations
and their behavior under Alvis--Curtis duality.}
% ===============================================================
\section{Rational nilpotent orbits and the rational order}\label{sec:orbits}
% ===============================================================

\subsection{Multiplicity spaces and the determinant}

We retain the notation of the introduction for $G$, $G_0$ and their
$\kk$-structures. Choose an $\sltwo(\kk)$-triple
$\{X,H,Y\}\subset\g^F=\so(V)$ with $X\in\Nil^F$. The corresponding
{$\sltwo(\kk)$-action} gives an orthogonal decomposition over $\kk$
\begin{equation}\label{eq:isotypic}
V=\bigoplus_{d\geqslant1}L_d\otimes_{\kk}W_d,
\end{equation}
where $L_d$ is the {irreducible $\sltwo(\kk)$-module} of dimension $d$ and $W_d$
is its multiplicity space over $\kk$. Thus $m_d=\dim_{\kk}W_d$ is the
multiplicity of $d$ in the partition of $X$. Fix a nondegenerate $\sltwo(\kk)$-invariant bilinear form
$\beta_d$ on $L_d$. It is symmetric for odd $d$ and {skew-symmetric} for even $d$. The
\rev{restriction of $B$ to} $L_d\otimes W_d$ is the tensor product of
$\beta_d$ with a nondegenerate form on $W_d$. This form is symmetric for odd
$d$ and {skew-symmetric} for even $d$; in particular, $m_d$ is even when $d$ is
even. Recall that over $\kk$ a
nondegenerate {skew-symmetric} space is determined up to isometry by its dimension,
and a nondegenerate symmetric space by its dimension and discriminant.

Following \cite[Section~3.1]{Wang}, let $\mathcal{Y}(V)$ be the set of signed
Young diagrams
\[
D=[(\lambda_1^{n_1},a_1),\dots,(\lambda_t^{n_t},a_t)]
\]
such that
\[
\lambda_1>\dots>\lambda_t>0,\quad n_i\in\mathbb{Z}_{>0},\quad
\sum_i n_i\lambda_i=\dim V,\quad a_i\in \kk^\times/\kk^{\times2}=\{\pm\},
\]
\[
\lambda_i\text{ even}\implies n_i\text{ even and }a_i=+,\qquad
\prod_{i<j}\varepsilon(-1)^{n_in_j}\prod_i a_i=\disc(V).
\]
\finalrev{For odd $\lambda_i$, the associated symmetric space has dimension
$n_i$, and its two possible types are distinguished by the
discriminant $a_i$ \cite[Section~3.1]{Wang}. For even $\lambda_i$,
the associated skew-symmetric space has dimension $n_i$ and only
one possible type, so we always write $a_i=+$.
For convenience, we sometimes omit the sign $a_i=+$ when
$\lambda_i$ is even.

We use this parametrization throughout. For $X\in\Nil^F$,
the integers $n_i$ are the multiplicities
$\dim W_{\lambda_i}$ in \eqref{eq:isotypic}, and the signs
$a_i$ are those attached to $X$ by this parametrization.
Thus there is a bijection
}\[
\Nil^F/\Ad(G^F)\xrightarrow{\ \sim\ }\mathcal{Y}(V),\qquad
{\cO=\Ad(G^F)X}{\longmapsto D=}[(\lambda_1^{n_1},a_1),\dots,(\lambda_t^{n_t},a_t)].
\]
{We denote $\cO$ by $\cO_D$ if it corresponds to the signed Young diagram $D$. If the underlying stable orbit of $\cO$ contains a unique rational orbit, then we denote $\cO$ by $\cO_\lambda$, where $\lambda$ is the partition of the underlying stable orbit.}

\rev{For $G_0^F$ this parametrization differs only at very even partitions
(see Lemma~\ref{lem:split} below). A very even partition $\lambda$ occurs only when
$V$ is split, and its signed Young diagram then corresponds to two rational
$G_0^F$-orbits, which we denote $\cO_\lambda^{\I}$ and $\cO_\lambda^{\II}$.
They lie in distinct stable orbits, each containing exactly one rational
$G_0^F$-orbit. Thus rational $G_0^F$-orbits are parametrized by signed Young
diagrams, together with a label $\I$ or $\II$ in the very even case.}

Write $W_{d,\kb}=W_d\otimes_{\kk}\kb$ for the base change of a
multiplicity space. The algebraic centralizer of {the $\sltwo(\kk)$-triple
$\{X,H,Y\}\subset\g^F=\so(V)$} in $G$ is
\begin{equation}\label{eq:reductive-centralizer}
\prod_{d\text{ odd}}\mathrm{O}(W_{d,\kb})\times\prod_{d\text{ even}}\mathrm{Sp}(W_{d,\kb}),
\end{equation}
with its induced Frobenius structure. Its $F$-fixed points are the product
of the isometry groups of the $\kk$-spaces $W_d$. The determinant of an
element $(g_d)$ acting on $V_{\kb}$ is
\begin{equation}\label{eq:det}
\det\nolimits_{V_{\kb}}\bigl((g_d)\bigr)=\prod_d\det(g_d)^d=\prod_{d\text{ odd}}\det(g_d).
\end{equation}
Since the unipotent radical of $C_G(X)$ is connected and consists of elements of
determinant one, \eqref{eq:det} also determines whether $C_G(X)$ contains an
element of determinant $-1$.

\begin{lemma}\label{lem:split}
A rational $G^F$-orbit is the union of two rational $G_0^F$-orbits if its
partition is very even, and is a single rational $G_0^F$-orbit otherwise. In
the very even case $V$ is split, and each of the two stable $G_0$-orbits
\rev{with this partition contains exactly one rational $G_0^F$-orbit.}
\end{lemma}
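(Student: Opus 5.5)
Fix $X\in\cO$ and write $\cO=\Ad(G^F)X$. Since $G_0^F$ is normal of index $2$ in $G^F$, the group $G^F$ acts transitively on the rational $G_0^F$-orbits inside $\cO$, and $G_0^F$ acts trivially on them. Hence $\cO$ is a single $G_0^F$-orbit exactly when $G^F=G_0^F\,C_G(X)^F$. Otherwise $\cO$ is the union of two $G_0^F$-orbits. Equivalently, $\cO$ does not split if and only if $C_G(X)^F$ contains an element of determinant $-1$. The plan is to test this criterion through the reductive centralizer \eqref{eq:reductive-centralizer} and the determinant formula \eqref{eq:det}.

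First, complete $X$ to an $\sltwo(\kk)$-triple, as in \eqref{eq:isotypic}. Then $C_G(X)^F=R^F\ltimes U^F$, where $R$ is the centralizer of the triple and $U$ is the unipotent radical. Every element of $U^F$ has determinant one. So $C_G(X)^F$ contains an element of determinant $-1$ if and only if $R^F=\prod_{d\text{ odd}}\mathrm O(W_d)\times\prod_{d\text{ even}}\mathrm{Sp}(W_d)$ does. By \eqref{eq:det}, the determinant on $R^F$ is $\prod_{d\text{ odd}}\det(g_d)$.

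If some odd $d$ has $m_d>0$, choose $g_d$ to be a reflection in the $\kk$-space $W_d$, which exists because $W_d$ is a nonzero nondegenerate symmetric space over $\kk$. Take all other components to be the identity. This element has determinant $-1$, so $\cO$ does not split. If instead the partition is very even, there are no odd $d$, so every element of $R^F$, and hence of $C_G(X)^F$, has determinant $1$. Then $\cO$ splits into exactly two $G_0^F$-orbits.

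Next I would prove that $V$ is split in the very even case. Each $L_d\otimes W_d$ with $d$ even is the tensor product of two skew-symmetric spaces, of dimensions $d$ and $m_d$. I would show it has a Lagrangian defined over $\kk$: take $L'\otimes W_d$, where $L'$ is a Lagrangian of $L_d$. An orthogonal sum of such spaces is split, so $V$ is split with $\disc(V)=+$. Alternatively, this follows from the discriminant formula in $\mathcal Y(V)$, since all $a_i=+$ and every $n_i$ is even.

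For the stable statement, apply the same argument over $\kb$. The group $C_G(X)$ is contained in $G_0$ in the very even case, so the stable $G$-orbit splits into two stable $G_0$-orbits, and these are interchanged by any element of $G^F\setminus G_0^F$. Both stable $G_0$-orbits are $F$-stable, because each contains one of the two rational $G_0^F$-orbits, and both of those consist of $F$-fixed points.

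It remains to show that each stable $G_0$-orbit contains exactly one rational $G_0^F$-orbit. By Lang's theorem, the rational $G_0^F$-orbits in $(\Ad(G_0)X)^F$ are parametrized by $H^1(F,A_{\mathrm{SO}}(X))$. I would check that $A_{\mathrm{SO}}(X)$ is trivial in the very even case. The centralizer $C_{G_0}(X)=C_G(X)$ has reductive part $\prod\mathrm{Sp}(W_{d,\kb})$, which is connected, so this cohomology set is trivial. This count, together with the splitting of $\cO$ into two orbits, gives exactly one rational orbit in each stable orbit.

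The main obstacle is the identification of the component group, which requires the decomposition $C_G(X)=R\ltimes U$ with $U$ connected unipotent. This is standard under the hypothesis $p$ large, via the $\sltwo$-grading. The rest is bookkeeping.
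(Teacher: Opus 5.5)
Your proposal is correct and follows essentially the same route as the paper. Both arguments count the $G_0^F$-orbits in $\Ad(G^F)X$ as the index $[G^F:G_0^FC_{G^F}(X)]$, produce a centralizing element of determinant $-1$ from a reflection of $W_d$ for an odd part $d$, use that the centralizer is connected and contained in $G_0$ when all parts are even, show $V$ is split via the totally isotropic subspace $\bigoplus_d A_d\otimes W_d$, and finish with Lang's theorem for the two stable $G_0$-orbits.
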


\begin{proof}
The number of $G_0^F$-orbits in $\Ad(G^F)X$ equals
$[G^F:G_0^FC_{G^F}(X)]$.

{Suppose that an odd part occurs in the partition of $X$.} Fix such a part $d$ and a vector
$v\in W_d$ with $(v,v)\neq0$, where $(\ ,\ )$ is the symmetric form on
$W_d$. Define $z\colon V\to V$ by
\[
z(u\otimes w)=
\begin{cases}
u\otimes\Bigl(w-\dfrac{2(w,v)}{(v,v)}\,v\Bigr),&u\in L_d,\ w\in W_d,\\[1.5ex]
u\otimes w,&u\in L_e,\ w\in W_e,\ e\neq d,
\end{cases}
\]
and extend $z$ linearly using \eqref{eq:isotypic}. Since
$W_d=\kk v\oplus v^\perp$, the map $z$ acts by $-1$ on $L_d\otimes\kk v$ and by $+1$ on its orthogonal
complement in $V$. Hence $z$ is an isometry defined over $\kk$, so $z\in G^F$. On each summand $L_e\otimes W_e$ the operator $X$ acts through
$L_e$, so $zX=Xz$ and $z\in C_{G^F}(X)$. The $(-1)$-eigenspace of $z$ has dimension $d$, so $\det z=(-1)^d=-1$. {Hence $z\notin G_0^F$.}
Therefore $G^F=G_0^FC_{G^F}(X)$, and the index is $1$.

{Suppose now that all parts of the partition of $X$ are even.} Then every factor in
\eqref{eq:reductive-centralizer} is symplectic, so $C_G(X)$ is connected and
contained in $G_0$. Hence $G_0^FC_{G^F}(X)=G_0^F$, and the index is
$[G^F:G_0^F]=2$. Moreover, $V$ is split: if $A_d\subset L_d$ is a Lagrangian
subspace, then $A_d\otimes W_d$ is a totally isotropic subspace of half the
dimension of $L_d\otimes W_d$. Their sum is a totally isotropic subspace of
$V$ of half its dimension.

Over $\kb$ the same argument shows that $\Ad(G(\kb))X$ is the union of two
$G_0$-orbits {if all parts of the partition of $X$ are even.} Both contain rational points, namely $X$ and $cXc^{-1}$ for any
rational isometry $c$ of determinant $-1$. Their centralizers in $G_0$ are connected. By Lang's theorem (equivalently,
by the parametrization of rational forms by $H^1(F,C_{G_0}(X))$), each
contains a single $G_0^F$-orbit.
\end{proof}

\begin{lemma}\label{lem:conj}
Fix $c\in G^F\setminus G_0^F$. Conjugation by $c$ preserves every rational
$G_0^F$-orbit whose partition is not very even, and interchanges
$\cO_\lambda^{\I}$ and $\cO_\lambda^{\II}$ for every very even partition
$\lambda$. The same holds for the corresponding stable
$G_0$-orbits.
\end{lemma}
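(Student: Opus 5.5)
The plan is to work directly with the index computation from the proof of Lemma~\ref{lem:split}, treating the two cases of the partition $\lambda$ separately. Let $\cO_0=\Ad(G_0^F)X$ be a rational $G_0^F$-orbit and fix $c\in G^F\setminus G_0^F$. Since $G_0^F$ is normal in $G^F$, conjugation by $c$ permutes the rational $G_0^F$-orbits, and $c\cO_0c^{-1}=\Ad(G_0^F)(cXc^{-1})$ lies in the same rational $G^F$-orbit $\pr(\cO_0)$. Likewise $\Ad(c)$ permutes the stable $G_0$-orbits inside the stable $G$-orbit of $X$, because $G_0$ is normal in $G$ and $c\in G(\kb)$.

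\emph{Not very even case.} By Lemma~\ref{lem:split}, $\pr^{-1}(\{\pr(\cO_0)\})=\{\cO_0\}$. Since $c\cO_0c^{-1}$ is a $G_0^F$-orbit contained in $\pr(\cO_0)$, it must equal $\cO_0$. For the stable orbit the argument is the same. The proof of Lemma~\ref{lem:split} produces an element $z\in C_{G}(X)$ with $\det z=-1$, obtained from an odd part of the partition. Hence $G(\kb)=G_0(\kb)C_{G(\kb)}(X)$. It follows that $\Ad(G(\kb))X=\Ad(G_0(\kb))X$, so this stable orbit is $\Ad(c)$-stable.

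\emph{Very even case.} By Lemma~\ref{lem:split}, $\cO_\lambda=\cO_\lambda^{\I}\sqcup\cO_\lambda^{\II}$. The index there was $[G^F:G_0^FC_{G^F}(X)]=2$, with $G_0^FC_{G^F}(X)=G_0^F$ because $C_G(X)$ is connected and contained in $G_0$. Suppose $cXc^{-1}=gXg^{-1}$ with $g\in G_0^F$. Then $g^{-1}c\in C_{G^F}(X)\subset G_0^F$, so $c\in G_0^F$, a contradiction. Hence $\Ad(c)$ moves $\cO_\lambda^{\I}$ to the other orbit $\cO_\lambda^{\II}$, and symmetrically moves $\cO_\lambda^{\II}$ back. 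The stable version runs in the same way over $\kb$. If $cXc^{-1}\in\Ad(G_0(\kb))X$, then $c\in G_0(\kb)C_{G(\kb)}(X)=G_0(\kb)$, contradicting $\det c=-1$. So $\Ad(c)$ interchanges the two stable $G_0$-orbits, consistent with $\bigl((\cO_\lambda^{\I})^{\st}\bigr)^F=\cO_\lambda^{\I}$.

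The only point needing some care is the stable version in the not very even case. Here one must note that the reflection $z$ of Lemma~\ref{lem:split} lies in $C_G(X)$ over $\kb$ as well, which is immediate from its construction. Alternatively, one can use \eqref{eq:reductive-centralizer} and \eqref{eq:det}: an odd $d$ contributes an $\mathrm{O}(W_{d,\kb})$ factor containing elements of determinant $-1$. Everything else is formal normality bookkeeping.
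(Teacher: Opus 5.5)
Your proof is correct and follows essentially the same route as the paper. In the not very even case you rely on the reflection $z\in C_G(X)$ with $\det z=-1$: the paper writes $cXc^{-1}=(cz)X(cz)^{-1}$ with $cz\in G_0^F$, whereas you invoke the singleton-fiber conclusion of Lemma~\ref{lem:split}. In the very even case you use $C_G(X)\subset G_0$ exactly as the paper does, and your explicit argument over $\kb$ for the stable orbits spells out what the paper summarizes as ``the same arguments over $\kb$.''
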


\begin{proof}
Let $X\in\Nil^F$ have a partition that is not very even, and let $z$ be the
element constructed in the proof of Lemma~\ref{lem:split}. Then
$z\in C_{G^F}(X)$ and $\det z=-1$, so $h=cz\in G_0^F$ and
$hXh^{-1}=cXc^{-1}$. Thus conjugation by $c$ preserves $\Ad(G_0^F)X$.

Now let $X$ have very even partition $\lambda$. If $cXc^{-1}=hXh^{-1}$ for some
$h\in G_0^F$, then $h^{-1}c\in C_{G^F}(X)$ would have determinant $-1$,
contradicting $C_{G^F}(X)\subset G_0^F$ (proof of Lemma~\ref{lem:split}). Since
$\Ad(G^F)X=\cO_\lambda^{\I}\cup\cO_\lambda^{\II}$, conjugation by $c$
interchanges the two orbits. The same arguments over $\kb$ prove the assertions for stable orbits.
\end{proof}

\begin{example}\label{ex:SO4}
Let $V$ be the split symmetric space of dimension $4$ with hyperbolic basis
$(e_1,e_2,f_1,f_2)$, and let $c$ interchange $e_1$ and $f_1$ and fix $e_2$ and
$f_2$; then $c\in G^F\setminus G_0^F$. Put
\[
X=\begin{pmatrix}0&0&0&1\\0&0&-1&0\\0&0&0&0\\0&0&0&0\end{pmatrix},\qquad
cXc^{-1}=\begin{pmatrix}0&0&0&0\\-1&0&0&0\\0&0&0&1\\0&0&0&0\end{pmatrix}.
\]
Both matrices are skew-adjoint with respect to the hyperbolic form, have square
zero and rank two, and hence have partition $(2^2)$. They are conjugate under
$\mathrm{O}^+_4(\kk)$ via $c$ but, by Lemma~\ref{lem:split}, not under
$\mathrm{SO}^+_4(\kk)$. They represent the two rational $G_0^F$-orbits with
partition $(2^2)$.
\end{example}

\subsection{Partial order on rational orbits}

We first recall the order on rational orbits for the full orthogonal group.
For a signed Young diagram
\[
D=[(\lambda_1^{n_1},a_1),\dots,(\lambda_t^{n_t},a_t)],\qquad
\lambda_1>\dots>\lambda_t,
\]
put
\[
c_r(D)=\sum_{\lambda_i\geqslant r}n_i,\qquad
C_r(D)=\sum_{j=1}^r c_j(D),\qquad
\sgn_r(D)=\prod_{\lambda_i\geqslant r}a_i
\prod_{\substack{i<j\\\lambda_i,\lambda_j\geqslant r}}\varepsilon(-1)^{n_in_j}.
\]
The order defined in \cite[Section~3.2]{Wang} is
\begin{equation}\label{eq:full-order}
D\leqslant_G E\iff
\begin{cases}
C_r(D)\geqslant C_r(E)&\text{for every }r\geqslant1,\\
\sgn_{2s}(D)=\sgn_{2s}(E)&\text{whenever }C_{2s}(D)=C_{2s}(E).
\end{cases}
\end{equation}
The first condition is the dominance order on partitions, written in terms of
columns; the second is the rational sign condition.

\begin{definition}\label{def:order}
For rational $G_0^F$-orbits $\cO_0,\cO_1$ we write $\cO_0\leqslant_{G_0}\cO_1$
if
\begin{equation}\label{eq:SO-order}
\pr(\cO_0)\leqslant_G\pr(\cO_1)\qquad\text{and}\qquad
\cO_0^{\st}\subset\overline{\cO_1^{\st}},
\end{equation}
where the bar denotes Zariski closure over $\kb$. The \emph{rational closure}
of a union $\mathcal U\subset\Nil^F$ of rational orbits is
\[
\overline{\mathcal U}=\bigcup_{\substack{\cO_0\leqslant_{G_0}\cO_1\\\cO_1\in\mathcal U}}\cO_0 .
\]

\end{definition}

\rev{The relation \eqref{eq:SO-order} is a partial order on
$\Nil^F/\Ad(G_0^F)$. Reflexivity and transitivity follow from those of
$\leqslant_G$ and of the closure order on {stable} orbits. If
$\cO_0\leqslant_{G_0}\cO_1$ and $\cO_1\leqslant_{G_0}\cO_0$, then
$\pr(\cO_0)=\pr(\cO_1)$ and $\cO_0^{\st}=\cO_1^{\st}$, so $\cO_0=\cO_1$ by
Lemma~\ref{lem:split}. The closure condition makes the two very even orbits
with the same partition incomparable, since their stable orbits are distinct
and have the same dimension. Apart from this, $\leqslant_{G_0}$ is determined
by $\leqslant_G$ via $\pr$:}

\begin{lemma}\label{lem:order}
Let $\cO_0,\cO_1$ be rational $G_0^F$-orbits with partitions
$\lambda_0,\lambda_1$.
\begin{enumerate}[label=\textup{(\roman*)}]
\item $\cO_0<_{G_0}\cO_1$ if and only if $\pr(\cO_0)<_G\pr(\cO_1)$.
\item If $\lambda_0$ or $\lambda_1$ is not very even, then
$\cO_0\leqslant_{G_0}\cO_1$ if and only if $\pr(\cO_0)\leqslant_G\pr(\cO_1)$.
\end{enumerate}
\end{lemma}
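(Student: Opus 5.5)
The plan is to reduce everything to two facts. The first is the closure order on stable $G$-orbits, which for orthogonal groups is the dominance order on partitions \cite[Section~6.2]{CM}. The second is Lemma~\ref{lem:conj}, which says that conjugation by $c\in G^F\setminus G_0^F$ fixes stable $G_0$-orbits of non-very-even partitions and swaps the two very even ones. Throughout I use that the first condition in \eqref{eq:full-order}, on the $C_r$, is column-wise dominance. Hence $\pr(\cO_0)\leqslant_G\pr(\cO_1)$ implies $\lambda_0\leqslant\lambda_1$ in dominance order, and so $\widetilde{\cO}_0^{\st}\subset\overline{\widetilde{\cO}_1^{\st}}$ for the stable $G$-orbits.

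\emph{Part (ii).} The direction ``$\Rightarrow$'' is immediate from Definition~\ref{def:order}. For ``$\Leftarrow$'' it suffices to show $\cO_0^{\st}\subset\overline{\cO_1^{\st}}$. The closure $\overline{\widetilde{\cO}_1^{\st}}$ is the union of the closures of the (one or two) stable $G_0$-orbits inside $\widetilde{\cO}_1^{\st}$. There are two cases.
\begin{enumerate}[label=(\alph*)]
\item If $\lambda_1$ is not very even, then $\widetilde{\cO}_1^{\st}=\cO_1^{\st}$ by Lemma~\ref{lem:split}. So $\overline{\cO_1^{\st}}$ contains all of $\widetilde{\cO}_0^{\st}\supset\cO_0^{\st}$.
\item If $\lambda_1$ is very even, then $\lambda_0$ is not very even. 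The irreducible set $\cO_0^{\st}$ lies in the closure of one of the two pieces $\cO_{\lambda_1}^{\I,\st},\cO_{\lambda_1}^{\II,\st}$. Conjugation by $c$ interchanges these pieces and fixes $\cO_0^{\st}$ (Lemma~\ref{lem:conj}). Therefore $\cO_0^{\st}$ lies in both closures, in particular in $\overline{\cO_1^{\st}}$.
\end{enumerate}

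\emph{Part (i), forward direction.} Suppose $\cO_0<_{G_0}\cO_1$. Then $\pr(\cO_0)\leqslant_G\pr(\cO_1)$. If equality held, the two $G_0^F$-orbits would lie in one rational $G^F$-orbit. Since $\cO_0\neq\cO_1$, Lemma~\ref{lem:split} forces them to be the two very even orbits $\cO_\lambda^{\I},\cO_\lambda^{\II}$. Their stable orbits are distinct of equal dimension, which contradicts $\cO_0^{\st}\subset\overline{\cO_1^{\st}}$. Hence $\pr(\cO_0)<_G\pr(\cO_1)$.

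\emph{Part (i), reverse direction.} Suppose $\pr(\cO_0)<_G\pr(\cO_1)$; I need $\cO_0^{\st}\subset\overline{\cO_1^{\st}}$.
\begin{enumerate}[label=(\alph*)]
\item If $\lambda_0$ or $\lambda_1$ is not very even, this is (ii).
\item If $\lambda_0=\lambda_1$ are both very even, then there is only one rational $G^F$-orbit with that partition, so the strict inequality is impossible.
\item The remaining case, and the main obstacle, is $\lambda_0<\lambda_1$ with both very even and distinct. Here one must show that \emph{each} of $\cO_{\lambda_0}^{\I,\st},\cO_{\lambda_0}^{\II,\st}$ lies in the closure of \emph{each} of $\cO_{\lambda_1}^{\I,\st},\cO_{\lambda_1}^{\II,\st}$.
\end{enumerate}

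For case (c), my plan is to produce a partition $\mu$ of $\dim V$ of orthogonal type, not very even, with $\lambda_0<\mu<\lambda_1$. The construction would work at the first column index where $\lambda_0$ and $\lambda_1$ differ. From $\lambda_1$, move a box from a part $2a$ to a smaller part of $\lambda_1$ (or to a new part), producing parts $2a-1$ and an odd part, each with even multiplicity. This gives a valid orthogonal partition that is not very even and still dominates $\lambda_0$; the point to check is that such a move never overshoots $\lambda_0$, which should follow from the fact that both partitions have all parts even, so their column sums agree in parity. Then $\cO_{\lambda_0}^{\st}\subset\overline{\cO_\mu^{\st}}\subset\overline{\cO_{\lambda_1}^{\st}}$ for every labeling, by applying the argument of (ii) twice on the stable level, which gives the closure inclusion. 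Failing an explicit $\mu$, I would instead cite the description of the closure order for very even orbits in type $D$ \cite[Theorem~6.2.5]{CM}.
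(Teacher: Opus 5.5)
Your proof is correct, but it is organized differently from the paper's. The paper proves (i) first. A strict inequality $\pr(\cO_0)<_G\pr(\cO_1)$ forces $\lambda_0<\lambda_1$ strictly, since a signed Young diagram is determined by its partition and the signs $\sgn_{2s}$. The closure order on stable $G_0$-orbits \cite[Section~2.1]{GPST} then gives $\cO_0^{\st}\subset\overline{\cO_1^{\st}}$ for every labeling, and (ii) is read off from (i) and Lemma~\ref{lem:split}.

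You go the other way. You prove (ii) directly from the Gerstenhaber--Hesselink order for $G=\mathrm O(V_{\kb})$, using that $\cO_0^{\st}$ is a single $G_0$-orbit and that $c$ fixes it while swapping the two very even pieces (Lemma~\ref{lem:conj}). You then deduce (i) from (ii), except when $\lambda_0<\lambda_1$ are both very even. This isolates exactly what the $\mathrm{SO}$ closure order adds beyond the $\mathrm O$ one, namely the comparison of distinct very even partitions. The paper's route is shorter but uses the full $\mathrm{SO}_{2n}$ closure order throughout. With your fallback citation in case~(c), your argument rests on the same kind of citation as the paper's.

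The explicit construction you sketch for case~(c) is not yet an argument, though. Moving one box off a part $2a$ always leaves $2a$ with odd multiplicity, so two boxes must move. The choice of part also matters. For $\lambda_0=(4^4)$ and $\lambda_1=(6^2,2^2)$, degenerating the $2$'s gives $(6^2,1^4)$, which does not dominate $\lambda_0$, whereas degenerating the $6$'s gives $(5^2,3^2)$, which does. The parity remark does not settle which move works.

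It is easier to move up from $\lambda_0$, as in Lemma~\ref{lem:intermediate} at the level of partitions. Let $i_0$ be the first index with $(\lambda_0)_{i_0}<(\lambda_1)_{i_0}$ and put $a=(\lambda_0)_{i_0}$. Replacing two parts $a$ of $\lambda_0$ by $a+1,a-1$ lowers only $C_a$, and only by one, while $C_a(\lambda_0)-C_a(\lambda_1)\geqslant(\lambda_1)_{i_0}-a>0$. The result is a partition $\nu$ of $\dim V$ that is not very even, whose even parts still have even multiplicity, and which satisfies $\lambda_0<\nu<\lambda_1$. Your stable-level steps (a) and (b) then give $(\cO_{\lambda_0}^{\iota})^{\st}\subset\overline{\cO_\nu^{\st}}\subset\overline{(\cO_{\lambda_1}^{\iota'})^{\st}}$ for all labels $\iota,\iota'$. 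No citation is needed for very even orbits.
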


\begin{proof}
(i) Suppose that $\pr(\cO_0)<_G\pr(\cO_1)$. \rev{Since a signed Young
diagram for $V$ is determined by its partition and the signs $\sgn_{2s}$,
\eqref{eq:full-order} forces $\lambda_0<\lambda_1$ in the dominance order.} By
\cite[Section~2.1]{GPST} this gives $\cO_0^{\st}\subset\overline{\cO_1^{\st}}$,
and hence $\cO_0<_{G_0}\cO_1$. Conversely, $\cO_0<_{G_0}\cO_1$ implies
$\pr(\cO_0)\leqslant_G\pr(\cO_1)$. If equality held, the two $G_0^F$-orbits
would either coincide or be the two very even orbits with the same partition.
In neither case can $\cO_0<_{G_0}\cO_1$ hold. Hence
$\pr(\cO_0)<_G\pr(\cO_1)$.

(ii) The forward implication is part of Definition~\ref{def:order}.
Conversely, assume $\pr(\cO_0)\leqslant_G\pr(\cO_1)$. If the inequality is
strict, apply (i). Otherwise $\lambda_0=\lambda_1$ is not very even, and
$\cO_0=\cO_1$ by Lemma~\ref{lem:split}.
\end{proof}

\begin{example}[{$\dim V=8$}] \label{ex:SO8}

\emph{Split form.} For $\mathrm{O}_8^+(\kk)$ there are $16$ rational $G^F$-orbits.
Passing to $\mathrm{SO}_8^+(\kk)$, each of the orbits with partition $(4^2)$ or
$(2^4)$ splits into two, labeled $\I$ and $\II$, giving $18$ rational
$G_0^F$-orbits. Figure~\ref{fig:O8plus} compares the two Hasse diagrams. The
orbits $(4^2)^{\I}$ and $(4^2)^{\II}$ are incomparable, since their stable
orbits are distinct and have the same dimension. Both lie below the two orbits
with partition $(5,3)$ and above the two orbits with partition $(3^2,1^2)$. The same
holds for $(2^4)^{\I}$ and $(2^4)^{\II}$ relative to $(3,2^2,1)$ and
$(2^2,1^4)$. {Every other relation for $\mathrm{O}_8^+(\kk)$ lifts unchanged.}

\emph{Non-split form.} For $\mathrm{O}_8^-(\kk)$ no very even partition occurs, so
$\pr$ is a bijection identifying $\leqslant_{G_0}$ with $\leqslant_G$ by
Lemma~\ref{lem:order}(ii). Hence $\mathrm{O}_8^-(\kk)$ and $\mathrm{SO}_8^-(\kk)$ have the
same Hasse diagram, with $14$ rational orbits; see Figure~\ref{fig:O8minus}.

\end{example}

\begin{figure}[H]
\centering
\begin{minipage}[b]{0.42\textwidth}\centering
\begin{tikzpicture}[orb/.style={inner sep=2pt,font=\scriptsize,fill=white}]
  \node[orb] (a0) at (1.15,11.06) {${\cO_{[(7,+),(1,\delta)]}}$};
  \node[orb] (a1) at (3.45,11.06) {${\cO_{[(7,-),(1,-\delta)]}}$};
  \node[orb] (a2) at (1.15,9.48) {${\cO_{[(5,+),(3,\delta)]}}$};
  \node[orb] (a3) at (3.45,9.48) {${\cO_{[(5,-),(3,-\delta)]}}$};
  \node[orb] (a4) at (0.00,7.90) {${\cO_{[(5,+),(1^{3},\delta)]}}$};
  \node[orb] (a5) at (4.60,7.90) {${\cO_{[(5,-),(1^{3},-\delta)]}}$};
  \node[orb] (a6) at (2.30,7.90) {${\cO_{(4^{2})}}$};
  \node[orb] (a7) at (1.15,6.32) {${\cO_{[(3^{2},+),(1^{2},+)]}}$};
  \node[orb] (a8) at (3.45,6.32) {${\cO_{[(3^{2},-),(1^{2},-)]}}$};
  \node[orb] (a9) at (1.15,4.74) {${\cO_{[(3,+),2^{2},(1,\delta)]}}$};
  \node[orb] (a10) at (3.45,4.74) {${\cO_{[(3,-),2^{2},(1,-\delta)]}}$};
  \node[orb] (a11) at (0.00,3.16) {${\cO_{[(3,+),(1^{5},\delta)]}}$};
  \node[orb] (a12) at (4.60,3.16) {${\cO_{[(3,-),(1^{5},-\delta)]}}$};
  \node[orb] (a13) at (2.30,3.16) {${\cO_{(2^{4})}}$};
  \node[orb] (a14) at (2.30,1.58) {${\cO_{(2^{2}\,1^{4})}}$};
  \node[orb] (a15) at (2.30,0.00) {${\cO_{(1^{8})}}$};
  \draw (a2) -- (a1);
  \draw (a2) -- (a0);
  \draw (a3) -- (a1);
  \draw (a3) -- (a0);
  \draw (a4) -- (a2);
  \draw (a5) -- (a3);
  \draw (a6) -- (a2);
  \draw (a6) -- (a3);
  \draw (a7) -- (a6);
  \draw (a7) -- (a5);
  \draw (a7) -- (a4);
  \draw (a8) -- (a6);
  \draw (a8) -- (a5);
  \draw (a8) -- (a4);
  \draw (a9) -- (a7);
  \draw (a9) -- (a8);
  \draw (a10) -- (a7);
  \draw (a10) -- (a8);
  \draw (a11) -- (a9);
  \draw (a12) -- (a10);
  \draw (a13) -- (a9);
  \draw (a13) -- (a10);
  \draw (a14) -- (a11);
  \draw (a14) -- (a13);
  \draw (a14) -- (a12);
  \draw (a15) -- (a14);
\end{tikzpicture}

\smallskip
\textup{(a)} $\mathrm{O}_8^+$
\end{minipage}\hfill
\begin{minipage}[b]{0.56\textwidth}\centering
\begin{tikzpicture}[orb/.style={inner sep=2pt,font=\scriptsize,fill=white}]
  \node[orb] (b0) at (2.30,11.06) {${\cO_{[(7,+),(1,\delta)]}}$};
  \node[orb] (b1) at (4.60,11.06) {${\cO_{[(7,-),(1,-\delta)]}}$};
  \node[orb] (b2) at (2.30,9.48) {${\cO_{[(5,+),(3,\delta)]}}$};
  \node[orb] (b3) at (4.60,9.48) {${\cO_{[(5,-),(3,-\delta)]}}$};
  \node[orb] (b4) at (0.00,7.90) {${\cO_{[(5,+),(1^{3},\delta)]}}$};
  \node[orb] (b5) at (6.90,7.90) {${\cO_{[(5,-),(1^{3},-\delta)]}}$};
  \node[orb] (b6) at (2.30,7.90) {${\cO_{(4^{2})}^{\mathrm{I}}}$};
  \node[orb] (b7) at (4.60,7.90) {${\cO_{(4^{2})}^{\mathrm{II}}}$};
  \node[orb] (b8) at (2.30,6.32) {${\cO_{[(3^{2},+),(1^{2},+)]}}$};
  \node[orb] (b9) at (4.60,6.32) {${\cO_{[(3^{2},-),(1^{2},-)]}}$};
  \node[orb] (b10) at (2.30,4.74) {${\cO_{[(3,+),2^{2},(1,\delta)]}}$};
  \node[orb] (b11) at (4.60,4.74) {${\cO_{[(3,-),2^{2},(1,-\delta)]}}$};
  \node[orb] (b12) at (0.00,3.16) {${\cO_{[(3,+),(1^{5},\delta)]}}$};
  \node[orb] (b13) at (6.90,3.16) {${\cO_{[(3,-),(1^{5},-\delta)]}}$};
  \node[orb] (b14) at (2.30,3.16) {${\cO_{(2^{4})}^{\mathrm{I}}}$};
  \node[orb] (b15) at (4.60,3.16) {${\cO_{(2^{4})}^{\mathrm{II}}}$};
  \node[orb] (b16) at (3.45,1.58) {${\cO_{(2^{2}\,1^{4})}}$};
  \node[orb] (b17) at (3.45,0.00) {${\cO_{(1^{8})}}$};
  \draw (b2) -- (b1);
  \draw (b2) -- (b0);
  \draw (b3) -- (b1);
  \draw (b3) -- (b0);
  \draw (b4) -- (b2);
  \draw (b5) -- (b3);
  \draw (b6) -- (b2);
  \draw (b6) -- (b3);
  \draw (b7) -- (b2);
  \draw (b7) -- (b3);
  \draw (b8) -- (b5);
  \draw (b8) -- (b6);
  \draw (b8) -- (b4);
  \draw (b8) -- (b7);
  \draw (b9) -- (b5);
  \draw (b9) -- (b6);
  \draw (b9) -- (b4);
  \draw (b9) -- (b7);
  \draw (b10) -- (b8);
  \draw (b10) -- (b9);
  \draw (b11) -- (b8);
  \draw (b11) -- (b9);
  \draw (b12) -- (b10);
  \draw (b13) -- (b11);
  \draw (b14) -- (b10);
  \draw (b14) -- (b11);
  \draw (b15) -- (b10);
  \draw (b15) -- (b11);
  \draw (b16) -- (b14);
  \draw (b16) -- (b12);
  \draw (b16) -- (b15);
  \draw (b16) -- (b13);
  \draw (b17) -- (b16);
\end{tikzpicture}

\smallskip
\textup{(b)} $\mathrm{SO}_8^+$
\end{minipage}
\caption{The rational orders $\leqslant_G$ for $\mathrm{O}_8^+$ and
$\leqslant_{G_0}$ for $\mathrm{SO}_8^+$, with $q$ odd and
$\delta=\varepsilon(-1)$. Larger orbits are higher.}
\label{fig:O8plus}
\end{figure}

\begin{figure}[H]
\centering
\begin{tikzpicture}[orb/.style={inner sep=2pt,font=\scriptsize,fill=white}]
  \node[orb] (c0) at (0.00,9.16) {${\cO_{[(7,+),(1,-\delta)]}}$};
  \node[orb] (c1) at (2.30,9.16) {${\cO_{[(7,-),(1,\delta)]}}$};
  \node[orb] (c2) at (0.00,7.85) {${\cO_{[(5,+),(3,-\delta)]}}$};
  \node[orb] (c3) at (2.30,7.85) {${\cO_{[(5,-),(3,\delta)]}}$};
  \node[orb] (c4) at (0.00,6.55) {${\cO_{[(5,+),(1^{3},-\delta)]}}$};
  \node[orb] (c5) at (2.30,6.55) {${\cO_{[(5,-),(1^{3},\delta)]}}$};
  \node[orb] (c6) at (0.00,5.24) {${\cO_{[(3^{2},+),(1^{2},-)]}}$};
  \node[orb] (c7) at (2.30,5.24) {${\cO_{[(3^{2},-),(1^{2},+)]}}$};
  \node[orb] (c8) at (0.00,3.93) {${\cO_{[(3,+),2^{2},(1,-\delta)]}}$};
  \node[orb] (c9) at (2.30,3.93) {${\cO_{[(3,-),2^{2},(1,\delta)]}}$};
  \node[orb] (c10) at (0.00,2.62) {${\cO_{[(3,+),(1^{5},-\delta)]}}$};
  \node[orb] (c11) at (2.30,2.62) {${\cO_{[(3,-),(1^{5},\delta)]}}$};
  \node[orb] (c12) at (1.15,1.31) {${\cO_{(2^{2}\,1^{4})}}$};
  \node[orb] (c13) at (1.15,0.00) {${\cO_{(1^{8})}}$};
  \draw (c2) -- (c0);
  \draw (c2) -- (c1);
  \draw (c3) -- (c0);
  \draw (c3) -- (c1);
  \draw (c4) -- (c2);
  \draw (c5) -- (c3);
  \draw (c6) -- (c4);
  \draw (c6) -- (c5);
  \draw (c7) -- (c4);
  \draw (c7) -- (c5);
  \draw (c8) -- (c6);
  \draw (c8) -- (c7);
  \draw (c9) -- (c6);
  \draw (c9) -- (c7);
  \draw (c10) -- (c8);
  \draw (c11) -- (c9);
  \draw (c12) -- (c10);
  \draw (c12) -- (c11);
  \draw (c13) -- (c12);
\end{tikzpicture}
\caption{The rational order for $\mathrm{O}_8^-$, which coincides with that
for $\mathrm{SO}_8^-$, with $q$ odd and $\delta=\varepsilon(-1)$.
Larger orbits are higher.}
\label{fig:O8minus}
\end{figure}

% ===============================================================
\section{Whittaker models, restriction and conjugation}\label{sec:models}
% ===============================================================

\subsection{Generalized Whittaker models}

Fix an $\sltwo(\kk)$-triple $\{X,H,Y\}\subset\g^F$ as in
Section~\ref{sec:orbits}, and let $\gamma\colon\mathbb G_m\to G_0$ be the
$\kk$-defined cocharacter obtained from the diagonal torus of the
corresponding $\mathrm{SL}_2$-action. Thus $d\gamma(1)=H$. \rev{Write}
\[
\g_j=\{Z\in\g:\Ad(\gamma(t))Z=t^jZ\text{ for all }t\in\kb^\times\},
\qquad \g_{\geqslant r}=\bigoplus_{j\geqslant r}\g_j.
\]
\rev{The weights $j$ are integers, the subspaces $\g_j$ are $F$-stable, and
$\ad H$ acts on $\g_j$ by multiplication by the image of $j$ in $\kk$.}
Let $G_{\geqslant1}$ be the connected unipotent subgroup with Lie algebra
$\g_{\geqslant1}$. \finalrev{We use the $G$-invariant nondegenerate symmetric bilinear form
\[
\langle Z,Z'\rangle=\operatorname{tr}_{V_{\kb}}(ZZ')
\qquad (Z,Z'\in\g).
\]
This form is defined over $\kk$ and is nondegenerate since $p\neq2$.}
The pairing $(Z,Z')\mapsto\langle Y,[Z,Z']\rangle$ on $\g_1$ is alternating and
nondegenerate. Choose an $F$-stable Lagrangian \rev{subspace} $\mathfrak{l}\subset\g_1$, and let
$G_{\geqslant1.5}$ be the connected unipotent subgroup with Lie algebra
$\g_{\geqslant1.5}=\mathfrak{l}\oplus\g_{\geqslant2}$.

These unipotent subgroups are defined over $\kk$. The form $\langle\ ,\ \rangle$ is $\kb$-bilinear and takes values in
$\kk$ on $\g^F\times\g^F$.

For $J=G,G_0$, let $\cO^J_X=\Ad(J^F)X$.
Fix the Cayley map $\Cay\colon\mathcal U(G)\to\Nil$ defined by
\[
\Cay(u)=(u-I)(u+I)^{-1},
\qquad u\in\mathcal U(G),
\]
where $I$ is the identity operator on $V_{\kb}$.
This map is $G$-equivariant and $F$-compatible.
This map sends $G_{\geqslant1.5}^F$ into
$\g_{\geqslant1.5}^F$. Define
\[
\psi_{\cO^J_X}(u):=\psi\bigl(\langle Y,\Cay(u)\rangle\bigr),\qquad
u\in G_{\geqslant1.5}^F.
\]
Since $G$ and $G_0$ have the same Lie algebra and every unipotent isometry has
determinant one, $G_{\geqslant1.5}\subset G_0$; the group
$G_{\geqslant1.5}^F$ and the character $\psi_{\cO^J_X}$ are therefore independent of whether
$J=G$ or $J=G_0$. In particular $\psi_{\cO^G_X}=\psi_{\cO^{G_0}_X}$.
Following \cite[Section~3]{Wang}, define the generalized Gelfand--Graev representation by
\begin{equation}\label{eq:GGGR}
\Gamma^J_{\cO^J_X}:=\Ind^{J^F}_{G_{\geqslant1.5}^F}\psi_{\cO^J_X},
\end{equation}
and, for a representation $\tau$ of $J^F$, the generalized Whittaker model by
\begin{equation}\label{eq:Wh}
\Wh_{\cO^J_X}(\tau):=\Hom_{G_{\geqslant1.5}^F}(\psi_{\cO^J_X},\tau).
\end{equation}
The isomorphism class of $\Gamma^J_{\cO^J_X}$ depends only on the rational
orbit, not on the choice of triple or Lagrangian. By Frobenius reciprocity,
$\dim\Wh_{\cO^J_X}(\tau)=\langle\Gamma^J_{\cO^J_X},\tau\rangle_{J^F}$.

For $\cO=\cO^J_X$, the model $\Wh_{\cO}(\tau)$ carries an action of
$M^F_{\psi_{\cO}}$, where, in the notation of \cite[Section~3]{Wang},
\[
M_{\cO}=C_J(H),\qquad
M_{\psi_{\cO}}=C_{M_{\cO}}(Y)=C_J(\{X,H,Y\}).
\]
These groups are defined over $\kk$, and the action is that of the finite group
$M_{\psi_{\cO}}^F$. As in \cite[Section~3]{Wang}, let $\omega_{\psi_{\cO}}$
be the canonical representation of $M^F_{\psi_{\cO}}\ltimes G_{\geqslant1}^F$
extending $\Ind^{G_{\geqslant1}^F}_{G_{\geqslant1.5}^F}\psi_{\cO}$: it is the
associated Weil representation if $\g_1\neq0$, and the extension of
$\psi_{\cO}$ trivial on $M^F_{\psi_{\cO}}$ if $\g_1=0$. Under the
identification
$\Wh_{\cO}(\tau)\simeq\Hom_{G_{\geqslant1}^F}(\omega_{\psi_{\cO}},\tau)$, the
action is
\[
(m\cdot f)(v)=\tau(m)\,f\bigl(\omega_{\psi_{\cO}}(m)^{-1}v\bigr),\qquad
m\in M^F_{\psi_{\cO}},
\]
for $f\in\Hom_{G_{\geqslant1}^F}(\omega_{\psi_{\cO}},\tau)$ and $v$ in the
space of $\omega_{\psi_{\cO}}$.

\begin{proposition}\label{prop:restriction}
For every representation $\tau$ of $G^F$ and every $X\in\Nil^F$,
\begin{equation}\label{eq:Wh-res}
\Wh_{\cO^G_X}(\tau)=\Wh_{\cO^{G_0}_X}\bigl(\ResG\tau\bigr)
\end{equation}
as vector spaces. Moreover,
\[
\Gamma^G_{\cO^G_X}\simeq\IndG\Gamma^{G_0}_{\cO^{G_0}_X},\qquad
\Wh_{\cO^G_X}(\tau\otimes\det)\simeq\Wh_{\cO^G_X}(\tau).
\]
In particular, twisting by $\det$ preserves all Whittaker multiplicities.
If $\tau$ is irreducible, it also preserves $\sO(\tau)$.
\end{proposition}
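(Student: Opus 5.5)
The equality \eqref{eq:Wh-res} follows directly from the definitions. As noted just before the proposition, $G_{\geqslant1.5}\subset G_0$, so $G_{\geqslant1.5}^F\subset G_0^F$, and $\psi_{\cO^G_X}=\psi_{\cO^{G_0}_X}$. Both sides of \eqref{eq:Wh-res} are therefore the space $\Hom_{G_{\geqslant1.5}^F}(\psi_{\cO^G_X},\tau)$: the left side views $\tau$ as a $G^F$-module, the right side as a $G_0^F$-module. The underlying $G_{\geqslant1.5}^F$-module is the same in both cases, so the two Hom spaces coincide as vector spaces. No choices need to be compared, because we use the same triple and the same Lagrangian $\mathfrak l$ for $J=G$ and $J=G_0$. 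The isomorphism class of each GGGR is independent of these choices.

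For the induction statement, apply transitivity of induction along $G_{\geqslant1.5}^F\subset G_0^F\subset G^F$:
\[
\Gamma^G_{\cO^G_X}=\Ind^{G^F}_{G_{\geqslant1.5}^F}\psi_{\cO^G_X}
\simeq\IndG\Ind^{G_0^F}_{G_{\geqslant1.5}^F}\psi_{\cO^{G_0}_X}
=\IndG\Gamma^{G_0}_{\cO^{G_0}_X}.
\]

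For the twist, note that $\det$ is trivial on $G_0^F$, so $\ResG(\tau\otimes\det)=\ResG\tau$. Applying \eqref{eq:Wh-res} to both $\tau$ and $\tau\otimes\det$ gives
\[
\Wh_{\cO^G_X}(\tau\otimes\det)=\Wh_{\cO^{G_0}_X}\bigl(\ResG\tau\bigr)=\Wh_{\cO^G_X}(\tau).
\]
This is an equality of vector spaces, so in particular an isomorphism. Alternatively, the projection formula gives $\Gamma^G_{\cO^G_X}\otimes\det\simeq\IndG(\Gamma^{G_0}_{\cO^{G_0}_X}\otimes\det|_{G_0^F})\simeq\Gamma^G_{\cO^G_X}$, and Frobenius reciprocity then yields the same conclusion.

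Every Whittaker multiplicity $\dim\Wh_{\cO}(\tau)$ is therefore unchanged by twisting with $\det$, for every rational $G^F$-orbit $\cO$. When $\tau$ is irreducible, so is $\tau\otimes\det$. The rational wavefront set is by definition the union of the orbits $\cO$ with $\Wh_{\cO}\neq0$, so $\sO(\tau\otimes\det)=\sO(\tau)$.

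There is no serious obstacle. The one point needing care is that the equality in \eqref{eq:Wh-res} is only an identification of vector spaces, not of $M_{\psi}^F$-modules. For $G^F$ the stabilizer $C_G(\{X,H,Y\})^F$ may contain elements of determinant $-1$, while the $G_0^F$ version uses only the determinant-one part. The statement claims nothing beyond vector spaces, so this causes no difficulty here. The plan also relies on the inclusion $G_{\geqslant1.5}\subset G_0$, which holds because these subgroups are connected and unipotent.
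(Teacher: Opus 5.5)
Your proof is correct and follows the paper's argument. Both sides of \eqref{eq:Wh-res} are the same space of $G_{\geqslant1.5}^F$-homomorphisms, the induction formula is induction in stages, and the twist isomorphism comes from the triviality of $\det$; you use this on $G_0^F$ via \eqref{eq:Wh-res}, while the paper uses it directly on $G_{\geqslant1.5}^F$, which amounts to the same thing.
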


\begin{proof}
Both sides of \eqref{eq:Wh-res} are the same space of
$G_{\geqslant1.5}^F$-homomorphisms. The formula for $\Gamma^G_{\cO^G_X}$ follows from
induction in stages, and the last isomorphism follows because $\det$ is
trivial on $G_{\geqslant1.5}^F$.
\end{proof}

\begin{lemma}\label{lem:pr-WF}
Let $\pi\in\Irr(G^F)$, and let $\sigma\in\Irr(G_0^F)$ occur in $\ResG\pi$. For
every rational $G_0^F$-orbit $\cO_0$,
\[
\cO_0\in\sO(\sigma)\implies\pr(\cO_0)\in\sO(\pi).
\]
\end{lemma}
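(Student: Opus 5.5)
The plan is to combine Proposition~\ref{prop:restriction} with the definition of $\sO$. Let $\cO_0=\cO^{G_0}_X$ for some $X\in\Nil^F$, so that $\pr(\cO_0)=\cO^G_X$. Since $\cO_0\in\sO(\sigma)$, the space $\Wh_{\cO_0}(\sigma)=\Hom_{G_{\geqslant1.5}^F}(\psi_{\cO_0},\sigma)$ is nonzero.

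The first step passes from $\sigma$ to the restriction of $\pi$. Because $\sigma$ is a direct summand of $\ResG\pi$, we have
\[
\Wh_{\cO_0}(\sigma)\subset\Wh_{\cO_0}\bigl(\ResG\pi\bigr),
\]
and this inclusion is given by composing with the inclusion $\sigma\hookrightarrow\ResG\pi$. The functor $\Hom_{G_{\geqslant1.5}^F}(\psi_{\cO_0},-)$ is additive, so the Whittaker model of a direct sum is the direct sum of the Whittaker models. It follows that $\Wh_{\cO_0}(\ResG\pi)\neq0$.

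The second step passes back to $\pi$ itself. Apply \eqref{eq:Wh-res} with $\tau=\pi$:
\[
\Wh_{\cO^G_X}(\pi)=\Wh_{\cO^{G_0}_X}\bigl(\ResG\pi\bigr)\neq0 .
\]
By the definition of the rational wavefront set, this gives $\cO^G_X=\pr(\cO_0)\in\sO(\pi)$.

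None of these steps should be a real obstacle. The only point to check is that the character $\psi_{\cO_0}$ and the subgroup $G_{\geqslant1.5}^F$ are the same whether we build them from the $G_0^F$-orbit or from the $G^F$-orbit. Section~\ref{sec:models} already records this, and it does not depend on the choice of representative $X$. Alternatively, the same conclusion follows from Frobenius reciprocity via $\Gamma^G_{\cO^G_X}\simeq\IndG\Gamma^{G_0}_{\cO_0}$, which gives
\[
\langle\Gamma^G_{\cO^G_X},\pi\rangle=\langle\Gamma^{G_0}_{\cO_0},\ResG\pi\rangle\geqslant\langle\Gamma^{G_0}_{\cO_0},\sigma\rangle>0 .
\]
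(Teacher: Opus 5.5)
Your proof is correct and takes essentially the same route as the paper. The paper also notes that both models use the same subgroup $G_{\geqslant1.5}^F$ and character $\psi_{\cO_0}=\psi_{\pr(\cO_0)}$, so composing with $\sigma\hookrightarrow\ResG\pi$ embeds $\Wh_{\cO_0}(\sigma)$ into $\Wh_{\pr(\cO_0)}(\pi)$; your Frobenius reciprocity variant is an equivalent restatement of this observation.
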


\begin{proof}
Take $X\in\cO_0$. {Both generalized Gelfand--Graev representations $\Gamma^G_{\cO^G_X}$ and $\Gamma^{G_0}_{\cO^{G_0}_X}$ use} the subgroup $G_{\geqslant1.5}^F$ and
the character $\psi_{\cO_0}=\psi_{\pr(\cO_0)}$. Thus an embedding
$\sigma\hookrightarrow\ResG\pi$ induces an injection
$\Wh_{\cO_0}(\sigma)\hookrightarrow\Wh_{\pr(\cO_0)}(\pi)$.
\end{proof}

\subsection{Clifford theory for index-two subgroups}

Let $c$ be a rational reflection, so that $c^2=1$ and
$G^F=G_0^F\rtimes\langle c\rangle$. For a representation $\sigma$ of $G_0^F$,
put $\sigma^c(h)=\sigma(c^{-1}hc)$. The following is a consequence of Clifford theory; see
\cite[Chapter~6]{Isaacs}.

\begin{lemma}\label{lem:clifford}
Let $\pi\in\Irr(G^F)$.
\begin{enumerate}[label=\textup{(\roman*)}]
\item If $\pi\otimes\det\not\simeq\pi$, then $\ResG\pi=\sigma$ is irreducible,
and the two extensions of $\sigma$ to $G^F$ are $\pi$ and $\pi\otimes\det$.
\item If $\pi\otimes\det\simeq\pi$, then $\ResG\pi=\sigma^{\I}\oplus\sigma^{\II}$
where $\sigma^{\I},\sigma^{\II}\in\Irr(G_0^F)$ are inequivalent. Moreover,
$\sigma^{\II}\simeq(\sigma^{\I})^c$ and $\IndG\sigma^{\I}\simeq\pi$.
\end{enumerate}
Every irreducible representation of $G_0^F$ arises in one of these two ways.
\end{lemma}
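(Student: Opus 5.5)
The plan is to apply standard Clifford theory for the index-two normal subgroup $G_0^F\subset G^F$, with $G^F/G_0^F\cong\{\pm1\}$ via $\det$. Fix the rational reflection $c$ with $\det c=-1$, $c^2=1$, so that $G^F=G_0^F\rtimes\langle c\rangle$. For $\pi\in\Irr(G^F)$, Clifford's theorem shows that $\ResG\pi$ is either irreducible or a sum of $G^F$-conjugates of a single irreducible $\sigma$, each with the same multiplicity. The conjugates are $\sigma$ and $\sigma^c$, and the multiplicity $e$ satisfies $e\le 1$ because $G^F/G_0^F$ is cyclic: an invariant $\sigma$ extends to $G^F$, so $e=1$ by Gallagher's theorem.

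The two cases are then separated by Frobenius reciprocity. The first step is the identity
\[
\IndG\ResG\pi\simeq\pi\otimes\IndG\mathbf 1=\pi\oplus(\pi\otimes\det).
\]
From it we get
\[
\langle\ResG\pi,\ResG\pi\rangle_{G_0^F}=\langle\pi,\pi\rangle+\langle\pi,\pi\otimes\det\rangle,
\]
which equals $1$ if $\pi\otimes\det\not\simeq\pi$ and $2$ otherwise.

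In the first case the restriction $\sigma$ is irreducible. Its extensions to $G^F$ are the irreducible constituents of $\IndG\sigma$, and $\IndG\sigma=\pi\oplus(\pi\otimes\det)$. These are exactly $\pi$ and $\pi\otimes\det$, which proves (i).

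In the second case the norm $2$ forces $\ResG\pi=\sigma^{\I}\oplus\sigma^{\II}$ with the two constituents irreducible and inequivalent. By Clifford's theorem they are $G^F$-conjugate, so $\sigma^{\II}\simeq(\sigma^{\I})^c$; they cannot both be $c$-stable, since otherwise they would coincide. Frobenius reciprocity gives $\langle\IndG\sigma^{\I},\pi\rangle=1$. Since $\dim\IndG\sigma^{\I}=2\dim\sigma^{\I}=\dim\pi$, we conclude $\IndG\sigma^{\I}\simeq\pi$, which proves (ii).

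For the final assertion, take $\sigma\in\Irr(G_0^F)$ and any irreducible constituent $\pi$ of $\IndG\sigma$. By Frobenius reciprocity $\sigma$ occurs in $\ResG\pi$, so $\sigma$ arises from case (i) or case (ii). The only mildly delicate point is the norm computation, which relies on the projection formula $\Ind(\Res\pi)\simeq\pi\otimes\Ind\mathbf 1$ and $\Ind^{G^F}_{G_0^F}\mathbf 1=\mathbf 1\oplus\det$; everything else is bookkeeping, as in \cite[Chapter~6]{Isaacs}.
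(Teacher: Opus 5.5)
Your argument is correct and is essentially the standard Clifford-theory argument the paper relies on; the paper gives no proof of its own and simply cites \cite[Chapter~6]{Isaacs}. The norm computation via $\IndG\ResG\pi\simeq\pi\oplus(\pi\otimes\det)$ already separates the two cases and shows the restriction is multiplicity-free, so the appeal to Gallagher's theorem in your first paragraph is not needed; the only input from Clifford's theorem you actually use later is that the constituents of $\ResG\pi$ are $G^F$-conjugate.
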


\begin{proposition}\label{prop:conj}
For $\sigma\in\Irr(G_0^F)$,
\begin{equation}\label{eq:Wh-conj}
\Wh_{\Ad(c)\cO^{G_0}_X}(\sigma^c)\simeq\Wh_{\cO^{G_0}_X}(\sigma).
\end{equation}
Conjugation by $c$ identifies $M^F_{\psi_{\cO^{G_0}_X}}$ with
$M^F_{\psi_{\cO^{G_0}_{cXc^{-1}}}}$, and \eqref{eq:Wh-conj} is equivariant for
their actions. Consequently, $\sO(\sigma^c)=\Ad(c)\sO(\sigma)$ and
$\cO_{\sigma^c}^{\st}=\Ad(c)\cO_{\sigma}^{\st}$.
\end{proposition}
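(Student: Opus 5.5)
The plan is to realize conjugation by $c$ as an explicit isomorphism of the data defining the generalized Whittaker models. Put $X'=cXc^{-1}$ and transport the chosen $\sltwo(\kk)$-triple to $\{X',H',Y'\}=\Ad(c)\{X,H,Y\}$, which is again a triple in $\g^F$ because $c\in G^F$. The cocharacter attached to this triple is $\gamma'=c\gamma c^{-1}$, so $\g'_j=\Ad(c)\g_j$. We take $\mathfrak l'=\Ad(c)\mathfrak l$; it is $F$-stable and Lagrangian for the pairing defined by $Y'$, since $\langle\ ,\ \rangle$ is $G$-invariant. Consequently $G'_{\geqslant1.5}=cG_{\geqslant1.5}c^{-1}$. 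The Cayley map is $G$-equivariant and $\langle\ ,\ \rangle$ is $\Ad(G)$-invariant, so
\[
\psi_{\cO_{X'}}(cuc^{-1})=\psi\bigl(\langle cYc^{-1},c\,\Cay(u)c^{-1}\rangle\bigr)=\psi_{\cO_X}(u)
\]
for $u\in G_{\geqslant1.5}^F$. Since the isomorphism class of the GGGR does not depend on the choice of triple or Lagrangian, we may use these transported choices to compute $\Wh_{\Ad(c)\cO^{G_0}_X}$.

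We then define the map. The representation $\sigma^c$ acts on the same space as $\sigma$, with $\sigma^c(h)=\sigma(c^{-1}hc)$. A linear map $f$ from $\psi$ to $\sigma$ lies in $\Wh_{\cO^{G_0}_X}(\sigma)$ precisely when $\sigma(u)f=\psi_{\cO_X}(u)f$ for all $u\in G^F_{\geqslant1.5}$. For $u'=cuc^{-1}\in G'^F_{\geqslant1.5}$ we have
\[
\sigma^c(u')f=\sigma(u)f=\psi_{\cO_X}(u)f=\psi_{\cO_{X'}}(u')f.
\]
So the identity map on the underlying spaces gives \eqref{eq:Wh-conj}, and it is clearly bijective. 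Note that the $G_{\geqslant1.5}$ lie in $G_0$, so these subgroups are the same for $G$ and for $G_0$.

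For equivariance, $\Ad(c)$ carries $C_{G_0}(\{X,H,Y\})$ onto $C_{G_0}(\{X',H',Y'\})$, because $G_0$ is normal in $G$ and the map commutes with $F$. This identifies the groups $M^F_{\psi}$. We then use the model $\Hom_{G_{\geqslant1}^F}(\omega_\psi,\sigma)$. The transported Weil representation $\omega_{\psi}\circ\Ad(c)^{-1}$ of $M'^F_{\psi'}\ltimes G'^F_{\geqslant1}$ extends $\Ind\psi_{\cO_{X'}}$. By the uniqueness (canonicity) of the canonical extension $\omega_{\psi'}$, it is isomorphic to $\omega_{\psi'}$. This is the one point where care is needed: canonicity here means the Weil representation is determined by the symplectic data up to isomorphism, or in the case $\g_1=0$ by being trivial on $M$, and transport of structure preserves this characterization. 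With this identification, the formula $(m\cdot f)(v)=\sigma(m)f(\omega(m)^{-1}v)$ shows directly that $f$ is taken to the same $f$, and that $m$ corresponds to $cmc^{-1}$.

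For the consequences, \eqref{eq:Wh-conj} applied to every rational orbit shows that $\Wh_{\cO_0}(\sigma)\neq0$ if and only if $\Wh_{\Ad(c)\cO_0}(\sigma^c)\neq0$. Hence $\sO(\sigma^c)=\Ad(c)\sO(\sigma)$. Conjugation by $c$ preserves both $\leqslant_{G_0}$ and the closure order on stable orbits, since it preserves $\pr$ and Zariski closures. It therefore maps $\sO(\sigma)^{\mx,\st}$ onto $\sO(\sigma^c)^{\mx,\st}$, which gives $\cO^{\st}_{\sigma^c}=\Ad(c)\cO^{\st}_\sigma$.
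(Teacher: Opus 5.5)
Your proposal is correct and follows essentially the same route as the paper. Both arguments transport the triple, Lagrangian and subgroup $G_{\geqslant1.5}$ by $\Ad(c)$, check that $\psi_{\cO_{cXc^{-1}}}(cuc^{-1})=\psi_{\cO_X}(u)$, and use the identity map on intertwiners. Both then take the Weil representation for $cXc^{-1}$ to be the transport of the one for $X$ to obtain equivariance, and deduce the wavefront statements from the fact that $\Ad(c)$ is an automorphism of $\Nil$ preserving Zariski closure.
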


\begin{proof}
For $cXc^{-1}$ we may use the subgroup $cG_{\geqslant1.5}c^{-1}$ in
\eqref{eq:Wh}. By the $G$-invariance of $\langle\ ,\ \rangle$ and the
equivariance of the Cayley map,
\[
\psi_{\cO^{G_0}_{cXc^{-1}}}(cuc^{-1})=\psi_{\cO^{G_0}_X}(u),\qquad
u\in G_{\geqslant1.5}^F.
\]
For $f\in\Hom_{G_{\geqslant1.5}^F}(\psi_{\cO^{G_0}_X},\sigma)$ and
$z\in\mathbb{C}$ we then have
\[
\sigma^c(cuc^{-1})f(z)=\sigma(u)f(z)
=f\bigl(\psi_{\cO^{G_0}_{cXc^{-1}}}(cuc^{-1})z\bigr),
\]
so $f$ belongs to the space on the left-hand side of \eqref{eq:Wh-conj}.
This gives \eqref{eq:Wh-conj}, with inverse induced by conjugation by $c^{-1}$.

Conjugation by $c$ identifies the centralizers of $\{X,H,Y\}$ and of
$\{cXc^{-1},cHc^{-1},cYc^{-1}\}$. It also identifies the symplectic spaces
underlying $\omega_{\psi_{\cO^{G_0}_X}}$ and
$\omega_{\psi_{\cO^{G_0}_{cXc^{-1}}}}$, so the Weil representations may be
chosen with
$\omega_{\psi_{\cO^{G_0}_{cXc^{-1}}}}(cmc^{-1})=\omega_{\psi_{\cO^{G_0}_X}}(m)$
for $m\in M^F_{\psi_{\cO^{G_0}_X}}$. Together with $\sigma^c(cmc^{-1})=\sigma(m)$,
the action formula above shows that \eqref{eq:Wh-conj} is equivariant.
As $X$ varies over the rational nilpotent elements, we obtain
$\sO(\sigma^c)=\Ad(c)\sO(\sigma)$. Finally, $\Ad(c)$ is an automorphism of
$\Nil$ and therefore preserves Zariski closure. The assertion for the
Kawanaka wavefront orbits follows.
\end{proof}

\begin{corollary}\label{cor:half}
Let $\pi\in\Irr(G^F)$ with $\ResG\pi=\sigma^{\I}\oplus\sigma^{\II}$. Let
$\cO_0$ be a rational $G_0^F$-orbit whose partition is not very even, and put
$\cO=\pr(\cO_0)$. Then
\begin{equation}\label{eq:half}
\dim\Wh_{\cO_0}(\sigma^{\I})=\dim\Wh_{\cO_0}(\sigma^{\II})
=\tfrac12\dim\Wh_{\cO}(\pi).
\end{equation}
In particular,
$\cO_0\in\sO(\sigma^{\I})\iff\cO_0\in\sO(\sigma^{\II})\iff\cO\in\sO(\pi)$.
\end{corollary}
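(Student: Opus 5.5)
The plan is to combine Proposition~\ref{prop:restriction}, Proposition~\ref{prop:conj} and Lemma~\ref{lem:conj}.

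\emph{Step 1 (restriction).} By Proposition~\ref{prop:restriction} we have
\[
\Wh_{\cO}(\pi)=\Wh_{\cO_0}\bigl(\ResG\pi\bigr)=\Wh_{\cO_0}(\sigma^{\I})\oplus\Wh_{\cO_0}(\sigma^{\II}),
\]
because $\Hom_{G_{\geqslant1.5}^F}(\psi_{\cO_0},-)$ is additive and $\pr(\cO_0)=\cO$ corresponds to the same $X$ and the same character. Hence
\[
\dim\Wh_{\cO_0}(\sigma^{\I})+\dim\Wh_{\cO_0}(\sigma^{\II})=\dim\Wh_{\cO}(\pi).
\]

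\emph{Step 2 (conjugation).} Fix a rational reflection $c$. By Lemma~\ref{lem:clifford}(ii), $\sigma^{\II}\simeq(\sigma^{\I})^c$. Since the partition of $\cO_0$ is not very even, Lemma~\ref{lem:conj} gives $\Ad(c)\cO_0=\cO_0$. Applying \eqref{eq:Wh-conj} to $\sigma=\sigma^{\I}$ then yields
\[
\Wh_{\cO_0}(\sigma^{\II})\simeq\Wh_{\Ad(c)\cO_0}\bigl((\sigma^{\I})^c\bigr)\simeq\Wh_{\cO_0}(\sigma^{\I}),
\]
so the two dimensions are equal. Combining this with Step 1 gives \eqref{eq:half}.

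\emph{Step 3 (membership).} The final equivalences follow because the three dimensions vanish simultaneously. Here $\cO_0\in\sO(\sigma)$ means $\Wh_{\cO_0}(\sigma)\neq0$, and $\cO\in\sO(\pi)$ means $\Wh_{\cO}(\pi)\neq0$.

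The only point needing care is Step 2. One must check that Proposition~\ref{prop:conj} is applied to the correct orbit, namely that $\Ad(c)\cO^{G_0}_X$ is literally the same rational $G_0^F$-orbit $\cO_0$. This is precisely where the hypothesis that the partition is not very even is used, via Lemma~\ref{lem:conj}. Without it, $c$ would swap $\cO_\lambda^{\I}$ and $\cO_\lambda^{\II}$, and the argument would only relate $\Wh_{\cO_\lambda^{\I}}(\sigma^{\I})$ to $\Wh_{\cO_\lambda^{\II}}(\sigma^{\II})$.
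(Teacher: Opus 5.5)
Your proposal is correct and follows the paper's own proof. Lemma~\ref{lem:conj} gives $\Ad(c)\cO_0=\cO_0$, Proposition~\ref{prop:conj} together with $\sigma^{\II}\simeq(\sigma^{\I})^c$ identifies the two Whittaker models, and Proposition~\ref{prop:restriction} supplies the sum formula, from which the membership equivalences follow immediately.
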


\begin{proof}
By Lemma~\ref{lem:conj}, $\Ad(c)\cO_0=\cO_0$. Since
$\sigma^{\II}\simeq(\sigma^{\I})^c$, {by Proposition~\ref{prop:conj}, we have}
$\Wh_{\cO_0}(\sigma^{\I})\simeq\Wh_{\cO_0}(\sigma^{\II})$. On the other hand,
Proposition~\ref{prop:restriction} gives
\[
\dim\Wh_{\cO}(\pi)=\dim\Wh_{\cO_0}(\sigma^{\I})+\dim\Wh_{\cO_0}(\sigma^{\II}).
\qedhere
\]
\end{proof}

\subsection{Irreducible restrictions}\label{subsec:irred}

Suppose first that \rev{$\dim V$ is odd}. Then
$G^F=G_0^F\times\langle-I\rangle$, so
\[
\Irr(G^F)=\{\sigma\boxtimes\eta:\ \sigma\in\Irr(G_0^F),\ \eta\in\Irr(\langle-I\rangle)\},
\]
and for $\pi=\sigma\boxtimes\eta$ we have $\ResG\pi=\sigma$ and
$\pi(-I)=\eta(-I)\,\mathrm{id}$ with $\eta(-I)\in\{\pm1\}$. Moreover,
$\Ad(-I)X=X$ for all $X\in\g$, so $\Ad(G^F)X=\Ad(G_0^F)X$. \rev{In particular, every irreducible representation of $G^F$ has irreducible
restriction to $G_0^F$.}

For the rest of this subsection, let $\dim V$ be arbitrary and suppose
that $\pi\in\Irr(G^F)$ has irreducible restriction $\sigma=\ResG\pi$.
By Proposition~\ref{prop:restriction}, for every rational $G_0^F$-orbit
$\cO_0$ we have
\begin{equation}\label{eq:irred-res}
\dim\Wh_{\cO_0}(\sigma)=\dim\Wh_{\pr(\cO_0)}(\pi),\qquad
\sO(\sigma)=\pr^{-1}\bigl(\sO(\pi)\bigr).
\end{equation}

\begin{proposition}\label{prop:irred-max}
Let $\pi\in\Irr(G^F)$ have irreducible restriction $\sigma=\ResG\pi$, and let
$\lambda$ be the partition of $\cO_{\pi}^{\st}$. Then $\lambda$ is not very
even. Each $\cO\in\sO(\pi)^{\mx}$ has a unique preimage $\cO_0$ under $\pr$;
this preimage lies in $\sO(\sigma)$ and satisfies
$\cO_0^{\st}=\cO_{\sigma}^{\st}$. The Kawanaka wavefront orbit
$\cO_{\sigma}^{\st}$ is stable under $\Ad(c)$.
\end{proposition}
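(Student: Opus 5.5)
The argument rests on \eqref{eq:irred-res}: since $\sigma=\ResG\pi$, we have $\sO(\sigma)=\pr^{-1}(\sO(\pi))$ and the Whittaker dimensions agree orbit by orbit. Throughout I use the result of \cite[Theorem~1.1]{Wang} that $\sO(\pi)^{\mx}=\sO(\pi)^{\mx,\st}$, so every $\cO\in\sO(\pi)^{\mx}$ lies in $(\cO_\pi^{\st})^F$ and has partition $\lambda$.

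\emph{Step 1: $\lambda$ is not very even.} Suppose it were. By Lemma~\ref{lem:split}, $V$ is split and $(\cO_\pi^{\st})^F=\cO_\lambda$ is a single rational $G^F$-orbit with $\pr^{-1}(\cO_\lambda)=\{\cO_\lambda^{\I},\cO_\lambda^{\II}\}$. Since $\cO_\lambda\in\sO(\pi)$, \eqref{eq:irred-res} puts both $\cO_\lambda^{\I}$ and $\cO_\lambda^{\II}$ in $\sO(\sigma)$.

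I now claim both are maximal in $\sO(\sigma)$ for the stable order. Take any $\cO_1\in\sO(\sigma)$ with $(\cO_\lambda^{\I})^{\st}\subsetneq\overline{\cO_1^{\st}}$. By Lemma~\ref{lem:pr-WF}, $\pr(\cO_1)\in\sO(\pi)$. Its stable $G$-orbit strictly contains $\cO_\pi^{\st}$ in its closure, because the partition strictly dominates $\lambda$. This contradicts the definition of $\cO_\pi^{\st}$ as the stable orbit of $\sO(\pi)^{\mx,\st}$.

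Hence $\sO(\sigma)^{\mx,\st}$ meets two distinct stable $G_0$-orbits, namely $(\cO_\lambda^{\I})^{\st}$ and $(\cO_\lambda^{\II})^{\st}$. This contradicts the uniqueness of the Kawanaka wavefront orbit \cite{Lus92,GM,AA,Tay}. This step is the crux: the key point is that uniqueness holds on the $G_0$ side, and that the two very even stable orbits are distinct and of equal dimension.

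\emph{Step 2: unique preimages.} Let $\cO\in\sO(\pi)^{\mx}$. Its partition is $\lambda$, which is not very even, so by Lemma~\ref{lem:split} it has a unique preimage $\cO_0$. By \eqref{eq:irred-res}, $\cO_0\in\sO(\sigma)$.

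\emph{Step 3: $\cO_0^{\st}=\cO_\sigma^{\st}$.} Because $\lambda$ is not very even, the stable $G$-orbit $\cO_\pi^{\st}$ is a single stable $G_0$-orbit, and $\cO_0^{\st}$ is that orbit. The Step 1 argument, applied to $\cO_0$, shows that $\cO_0^{\st}$ is maximal among the stable orbits of elements of $\sO(\sigma)$. Any $\cO_1\in\sO(\sigma)$ with strictly larger stable orbit would project to an element of $\sO(\pi)$ whose stable orbit strictly dominates $\cO_\pi^{\st}$. So $\cO_0\in\sO(\sigma)^{\mx,\st}$, and by uniqueness $\cO_0^{\st}=\cO_\sigma^{\st}$.

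\emph{Step 4: $\Ad(c)$-stability.} Since $\lambda$ is not very even, Lemma~\ref{lem:conj} gives $\Ad(c)\cO_\sigma^{\st}=\cO_\sigma^{\st}$. Alternatively, $\sigma^c\simeq\sigma$ because $\sigma$ extends to $\pi$, and Proposition~\ref{prop:conj} then gives the same conclusion.
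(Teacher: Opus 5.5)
Your proposal is correct and follows essentially the same route as the paper. In the very even case, both $\cO_\lambda^{\I}$ and $\cO_\lambda^{\II}$ lie in $\sO(\sigma)$ by \eqref{eq:irred-res}, and their distinct stable orbits are both maximal, contradicting uniqueness of the Kawanaka wavefront orbit; otherwise Lemma~\ref{lem:split}, \eqref{eq:irred-res} and the same maximality argument give the unique preimage with $\cO_0^{\st}=\cO_\sigma^{\st}$, and Lemma~\ref{lem:conj} gives $\Ad(c)$-stability. Your alternative for the last step, via $\sigma^c\simeq\sigma$ and Proposition~\ref{prop:conj}, is also valid.
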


\begin{proof}
Suppose that $\lambda$ is very even. \rev{Then $\cO_\lambda$ is the only
rational $G^F$-orbit with partition $\lambda$, so $\cO_\lambda\in\sO(\pi)$, and
by \eqref{eq:irred-res} both} $\cO_\lambda^{\I}$ and $\cO_\lambda^{\II}$ belong
to $\sO(\sigma)$. By
Lemma~\ref{lem:pr-WF} and \cite[Theorem~1.1]{Wang}, every partition of an orbit
in $\sO(\sigma)$ is at most $\lambda$. The two distinct stable orbits
$(\cO_\lambda^{\I})^{\st}$ and $(\cO_\lambda^{\II})^{\st}$ would then both be
maximal among the stable orbits meeting $\sO(\sigma)$, contradicting the
uniqueness of the Kawanaka wavefront orbit \cite[Theorem~14.10]{Tay}.

{We now consider the case where $\lambda$ is not very even.}
Every $\cO\in\sO(\pi)^{\mx}$ has partition $\lambda$ by
\cite[Theorem~1.1]{Wang}. Since $\lambda$ is not very even, $\cO$ has a unique
preimage $\cO_0$ by Lemma~\ref{lem:split}, and $\cO_0\in\sO(\sigma)$ by
\eqref{eq:irred-res}. All these preimages lie in the unique stable $G_0$-orbit
with partition $\lambda$, \rev{and every orbit in $\sO(\sigma)$ has partition
at most $\lambda$; hence this stable orbit is the Kawanaka wavefront orbit, that
is, $\cO_0^{\st}=\cO_{\sigma}^{\st}$.} This stable orbit is fixed by $\Ad(c)$ by
Lemma~\ref{lem:conj}.
\end{proof}

We now describe $\sO(\sigma)^{\mx}$ and its rational closure. Let
$\cO_1\in\sO(\sigma)$. By Lemma~\ref{lem:pr-WF}
and \cite[Theorem~1.1]{Wang}, there is an orbit $\cO\in\sO(\pi)^{\mx}$ such that
$\pr(\cO_1)\leqslant_G\cO$. By Proposition~\ref{prop:irred-max}, $\cO$ has a
unique preimage $\cO_2$, and $\cO_2\in\sO(\sigma)$. Since the partition of
$\cO_2$ is not very even, {by Lemma~\ref{lem:order}(ii), we have}
$\cO_1\leqslant_{G_0}\cO_2$. Conversely, if $\cO_1\leqslant_{G_0}\cO_2$ for
\rev{some $\cO_2\in{\pr^{-1}(\sO(\pi)^{\mx})}$, then $\pr(\cO_1)\leqslant_G\pr(\cO_2)$, so}
$\pr(\cO_1)\in\sO(\pi)$ by \cite[Theorem~1.1]{Wang}, \rev{and}
$\cO_1\in\sO(\sigma)$ by \eqref{eq:irred-res}. \rev{Hence
$\sO(\sigma)=\overline{{\pr^{-1}(\sO(\pi)^{\mx})}}$. Distinct elements of ${\pr^{-1}(\sO(\pi)^{\mx})}$ are
incomparable, since by Lemma~\ref{lem:order}(i) a strict inequality between
them would give a strict inequality between two elements of
$\sO(\pi)^{\mx}$. Therefore $\sO(\sigma)^{\mx}={\pr^{-1}(\sO(\pi)^{\mx})}$. Finally, every
element of ${\pr^{-1}(\sO(\pi)^{\mx})}$ lies in $\cO_\sigma^{\st}$ by
Proposition~\ref{prop:irred-max}; conversely, if $\cO_0\in\sO(\sigma)$ lies in
$\cO_\sigma^{\st}$, then $\pr(\cO_0)\in\sO(\pi)$ has partition $\lambda$, so
$\pr(\cO_0)\in\sO(\pi)^{\mx,\st}=\sO(\pi)^{\mx}$. Thus
$\sO(\sigma)^{\mx,\st}={\pr^{-1}(\sO(\pi)^{\mx})}$. This proves the following theorem.}

\begin{theorem}\label{thm:irred}
Let $\pi\in\Irr(G^F)$ and suppose that $\sigma=\ResG\pi$ is irreducible. Then
\[
\sO(\sigma)^{\mx}=\sO(\sigma)^{\mx,\st}=\pr^{-1}\bigl(\sO(\pi)^{\mx}\bigr),
\qquad
\sO(\sigma)=\overline{\sO(\sigma)^{\mx}},
\]
and $\pr$ restricts to a bijection
$\sO(\sigma)^{\mx}\xrightarrow{\sim}\sO(\pi)^{\mx}$. Moreover,
$\dim\Wh_{\cO_0}(\sigma)=\dim\Wh_{\pr(\cO_0)}(\pi)$ for every rational
$G_0^F$-orbit $\cO_0$.
\end{theorem}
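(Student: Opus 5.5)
The theorem collects the discussion that precedes it, so the proof assembles those steps in order.

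\emph{Multiplicities and wavefront set.} By Proposition~\ref{prop:restriction}, the Whittaker models of $\pi$ and $\sigma=\ResG\pi$ at $X$ are literally the same space of $G_{\geqslant1.5}^F$-homomorphisms. Hence
\[
\dim\Wh_{\cO_0}(\sigma)=\dim\Wh_{\pr(\cO_0)}(\pi)
\]
for every rational $G_0^F$-orbit $\cO_0$, and therefore $\sO(\sigma)=\pr^{-1}(\sO(\pi))$, as recorded in \eqref{eq:irred-res}. This already gives the final assertion.

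\emph{Shape of $\sO(\pi)^{\mx}$.} Next I invoke Proposition~\ref{prop:irred-max}. The partition $\lambda$ of $\cO_\pi^{\st}$ is not very even; this uses the uniqueness of the Kawanaka wavefront orbit to rule out the two very even stable orbits both being maximal. By \cite[Theorem~1.1]{Wang}, every $\cO\in\sO(\pi)^{\mx}$ has partition $\lambda$. So by Lemma~\ref{lem:split} each such $\cO$ has a unique preimage under $\pr$, and all these preimages lie in the single stable $G_0$-orbit with partition $\lambda$, which is $\cO_\sigma^{\st}$. Consequently $\pr$ restricts to a bijection $\pr^{-1}(\sO(\pi)^{\mx})\to\sO(\pi)^{\mx}$.

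\emph{Closure.} Given $\cO_1\in\sO(\sigma)$, Lemma~\ref{lem:pr-WF} and \cite[Theorem~1.1]{Wang} give some $\cO\in\sO(\pi)^{\mx}$ with $\pr(\cO_1)\leqslant_G\cO$. Let $\cO_2$ be the preimage of $\cO$; its partition is not very even, so Lemma~\ref{lem:order}(ii) yields $\cO_1\leqslant_{G_0}\cO_2$. Conversely, suppose $\cO_1\leqslant_{G_0}\cO_2$ with $\cO_2$ in that preimage. Then $\pr(\cO_1)\leqslant_G\pr(\cO_2)$, so $\pr(\cO_1)\in\overline{\sO(\pi)^{\mx}}=\sO(\pi)$, and hence $\cO_1\in\sO(\sigma)$. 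Together these give
\[
\sO(\sigma)=\overline{\pr^{-1}\bigl(\sO(\pi)^{\mx}\bigr)}.
\]

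\emph{Maximal elements.} By Lemma~\ref{lem:order}(i), a strict inequality between two elements of the preimage would give a strict inequality between two elements of $\sO(\pi)^{\mx}$. So the preimage is an antichain, and it is exactly $\sO(\sigma)^{\mx}$.

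\emph{Stable maximality.} For $\sO(\sigma)^{\mx,\st}$ I show it consists of the orbits of $\sO(\sigma)$ lying in $\cO_\sigma^{\st}$. One inclusion holds because every element of the preimage lies in $\cO_\sigma^{\st}$. For the other, if $\cO_0\in\sO(\sigma)$ lies in $\cO_\sigma^{\st}$, then $\pr(\cO_0)\in\sO(\pi)$ has partition $\lambda$. It is therefore in $\sO(\pi)^{\mx,\st}$, which equals $\sO(\pi)^{\mx}$ by \cite[Theorem~1.1]{Wang}.

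\emph{Main obstacle.} The only delicate point is showing $\lambda$ is not very even, because that is what makes the preimages unique. Proposition~\ref{prop:irred-max} handles it: both very even rational orbits $\cO_\lambda^{\I},\cO_\lambda^{\II}$ would lie in $\sO(\sigma)$ with distinct stable orbits, both maximal among the stable orbits meeting $\sO(\sigma)$, contradicting \cite[Theorem~14.10]{Tay}. Everything else is bookkeeping with Lemma~\ref{lem:order}.
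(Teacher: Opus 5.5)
Your proposal is correct and follows the paper's argument step for step. You obtain \eqref{eq:irred-res} from Proposition~\ref{prop:restriction}, use Proposition~\ref{prop:irred-max} to get unique preimages lying in $\cO_\sigma^{\st}$, and run the two-sided closure argument via Lemma~\ref{lem:pr-WF}, \cite[Theorem~1.1]{Wang} and Lemma~\ref{lem:order}(ii). You then get incomparability from Lemma~\ref{lem:order}(i) and identify $\sO(\sigma)^{\mx,\st}$ through $\sO(\pi)^{\mx,\st}=\sO(\pi)^{\mx}$, exactly as the paper does; like the paper, you leave implicit the immediate fact that an antichain in $\sO(\sigma)$ dominating every element of $\sO(\sigma)$ is precisely its set of maximal elements.
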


% ===============================================================
\section{Maximal orbits and Whittaker dimensions}\label{sec:max}
% ===============================================================

\subsection{\texorpdfstring{\rev{Very even maximal orbits}}{Very even maximal orbits}}

\begin{proposition}[Separation at very even maximal orbits]\label{prop:separation}
Let $\dim V=2n$, let $\pi\in\Irr(G^F)$ satisfy $\pi\otimes\det\simeq\pi$, and
write $\ResG\pi=\sigma^{\I}\oplus\sigma^{\II}$. Assume that the partition
$\lambda$ of $\cO_{\pi}^{\st}$ is very even, let $\cO_\lambda$ be the
rational $G^F$-orbit with partition $\lambda$, and write
$\pr^{-1}(\{\cO_\lambda\})=\{\cO_\lambda^{\I},\cO_\lambda^{\II}\}$. After
relabeling $\sigma^{\I}$ and $\sigma^{\II}$ if necessary,
\begin{equation}\label{eq:separation}
\begin{gathered}
\dim\Wh_{\cO_\lambda^{\I}}(\sigma^{\I})=\dim\Wh_{\cO_\lambda^{\II}}(\sigma^{\II})
=\dim\Wh_{\cO_\lambda}(\pi),\\
\Wh_{\cO_\lambda^{\II}}(\sigma^{\I})=\Wh_{\cO_\lambda^{\I}}(\sigma^{\II})=0,
\end{gathered}
\end{equation}
and
\[
\sO(\sigma^{\I})^{\mx}=\cO_\lambda^{\I},\qquad
\sO(\sigma^{\II})^{\mx}=\cO_\lambda^{\II}.
\]
\end{proposition}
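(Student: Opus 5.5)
Since $\cO_\lambda$ is the unique rational $G^F$-orbit with partition $\lambda$ and $\lambda$ is the partition of $\cO_\pi^{\st}$, we have $\sO(\pi)^{\mx}=\cO_\lambda$ by \cite[Theorem~1.1]{Wang}. Put $m=\dim\Wh_{\cO_\lambda}(\pi)>0$. By Proposition~\ref{prop:restriction},
\[
m=\dim\Wh_{\cO_\lambda^{\I}}(\sigma^{\I})+\dim\Wh_{\cO_\lambda^{\I}}(\sigma^{\II})
=\dim\Wh_{\cO_\lambda^{\II}}(\sigma^{\I})+\dim\Wh_{\cO_\lambda^{\II}}(\sigma^{\II}),
\]
since $\cO_\lambda^{\I}$ and $\cO_\lambda^{\II}$ have the same underlying rational $G^F$-orbit. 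Lemma~\ref{lem:conj} gives $\Ad(c)\cO_\lambda^{\I}=\cO_\lambda^{\II}$, and Lemma~\ref{lem:clifford}(ii) gives $\sigma^{\II}\simeq(\sigma^{\I})^c$. Proposition~\ref{prop:conj} therefore yields
\[
\dim\Wh_{\cO_\lambda^{\I}}(\sigma^{\I})=\dim\Wh_{\cO_\lambda^{\II}}(\sigma^{\II}),\qquad
\dim\Wh_{\cO_\lambda^{\II}}(\sigma^{\I})=\dim\Wh_{\cO_\lambda^{\I}}(\sigma^{\II}).
\]
Hence \eqref{eq:separation} reduces to one claim: $\sigma^{\I}$ has a nonzero model on at most one of $\cO_\lambda^{\I}$ and $\cO_\lambda^{\II}$.

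For this I would use the uniqueness of the Kawanaka wavefront orbit \cite[Theorem~14.10]{Tay}. By Lemma~\ref{lem:pr-WF} and \cite[Theorem~1.1]{Wang}, every orbit in $\sO(\sigma^{\I})$ projects into $\overline{\sO(\pi)^{\mx}}$, so its partition is dominated by $\lambda$. Since $m>0$, at least one of the two summands above is nonzero, so $\sigma^{\I}$ meets $\cO_\lambda^{\I}$ or $\cO_\lambda^{\II}$. Thus $\cO_{\sigma^{\I}}^{\st}$ has partition $\lambda$ and equals either $(\cO_\lambda^{\I})^{\st}$ or $(\cO_\lambda^{\II})^{\st}$. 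If $\sigma^{\I}$ had nonzero models on both, these two distinct stable orbits of equal dimension would both be maximal among the stable orbits meeting $\sO(\sigma^{\I})$, contradicting uniqueness. This is the same argument as in Proposition~\ref{prop:irred-max}. After relabeling so that $\sigma^{\I}$ meets $\cO_\lambda^{\I}$, we get $\Wh_{\cO_\lambda^{\II}}(\sigma^{\I})=0$, and all of \eqref{eq:separation} follows from the displayed identities.

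It remains to determine the maximal sets. Every $\cO_1\in\sO(\sigma^{\I})$ satisfies $\pr(\cO_1)\leqslant_G\cO_\lambda$. If the inequality is strict, Lemma~\ref{lem:order}(i) gives $\cO_1<_{G_0}\cO_\lambda^{\I}$. If it is an equality, then $\cO_1\in\{\cO_\lambda^{\I},\cO_\lambda^{\II}\}$, and only $\cO_\lambda^{\I}$ lies in $\sO(\sigma^{\I})$. Hence $\cO_\lambda^{\I}$ is the unique maximal element, so $\sO(\sigma^{\I})^{\mx}=\cO_\lambda^{\I}$. The statement for $\sigma^{\II}$ follows by applying $\Ad(c)$ via Proposition~\ref{prop:conj}. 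I expect the main obstacle to be the exclusion step, which depends on the external uniqueness theorem of Taylor for the Kawanaka wavefront orbit of $G_0^F$; everything else is bookkeeping with the earlier lemmas.
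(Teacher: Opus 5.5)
Your proof is correct and follows the paper's route: you use the decomposition from Proposition~\ref{prop:restriction}, the $\Ad(c)$-symmetry from Proposition~\ref{prop:conj}, and the uniqueness of the Kawanaka wavefront orbit to rule out the second very even orbit. The only difference is the last step. The paper cites \cite[Proposition~15.2]{Tay} to obtain $\cO_1^{\st}\subset\overline{(\cO_\lambda^{\I})^{\st}}$, whereas you treat the strict case by Lemma~\ref{lem:order}(i) and the equality case by the vanishing just proved; this is equally valid and avoids that extra citation.
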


\begin{proof}
By Proposition~\ref{prop:restriction},
\[
\dim\Wh_{\cO_\lambda^{\I}}(\sigma^{\I})+\dim\Wh_{\cO_\lambda^{\I}}(\sigma^{\II})
=\dim\Wh_{\cO_\lambda}(\pi)>0,
\]
so, after relabeling, $\Wh_{\cO_\lambda^{\I}}(\sigma^{\I})\neq0$. By
Lemma~\ref{lem:pr-WF}, $\pr(\cO_0)\in\sO(\pi)$ for every
$\cO_0\in\sO(\sigma^{\I})$; since $\lambda$ is {the largest partition of orbits occurring}
in $\sO(\pi)$, the stable orbit $(\cO_\lambda^{\I})^{\st}$ is maximal among the
stable orbits meeting $\sO(\sigma^{\I})$.

Since $\sigma^{\II}\simeq(\sigma^{\I})^c$ and
$\Ad(c)\cO_\lambda^{\I}=\cO_\lambda^{\II}$, Proposition~\ref{prop:conj} gives
\[
\Wh_{\cO_\lambda^{\II}}(\sigma^{\II})\simeq\Wh_{\cO_\lambda^{\I}}(\sigma^{\I})\neq0,
\qquad
\Wh_{\cO_\lambda^{\I}}(\sigma^{\II})\simeq\Wh_{\cO_\lambda^{\II}}(\sigma^{\I}).
\]
If $\Wh_{\cO_\lambda^{\II}}(\sigma^{\I})$ were nonzero, the stable orbit
$(\cO_\lambda^{\II})^{\st}\neq(\cO_\lambda^{\I})^{\st}$ would also be maximal
among the stable orbits meeting $\sO(\sigma^{\I})$, contradicting the
uniqueness of the Kawanaka wavefront orbit. Hence
$\Wh_{\cO_\lambda^{\II}}(\sigma^{\I})=0$, {which also implies that}
$\Wh_{\cO_\lambda^{\I}}(\sigma^{\II})=0$. In particular,
$\cO_{\sigma^{\I}}^{\st}=(\cO_\lambda^{\I})^{\st}$, and the dimension
equalities in \eqref{eq:separation} follow from
Proposition~\ref{prop:restriction}.

Let $\cO_1\in\sO(\sigma^{\I})$. By Lemma~\ref{lem:pr-WF} and
\cite[Theorem~1.1]{Wang}, $\pr(\cO_1)\leqslant_G\cO_\lambda$, and by
{\cite[Proposition~15.2]{Tay}}, $\cO_1^{\st}\subset\overline{(\cO_\lambda^{\I})^{\st}}$.
Thus $\cO_1\leqslant_{G_0}\cO_\lambda^{\I}$, and
$\sO(\sigma^{\I})^{\mx}=\cO_\lambda^{\I}$. The same argument applies to $\sigma^{\II}$.
\end{proof}

\begin{corollary}\label{cor:very-even-one}
In the situation of Proposition~\ref{prop:separation},
\[
\dim\Wh_{\cO_\lambda}(\pi)=\dim\Wh_{\cO_\lambda^{\I}}(\sigma^{\I})
=\dim\Wh_{\cO_\lambda^{\II}}(\sigma^{\II})=1.
\]
\end{corollary}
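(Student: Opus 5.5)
We have to show that $\dim\Wh_{\cO_\lambda}(\pi)=1$ when $\lambda$ is very even. By Proposition~\ref{prop:separation} the other two dimensions equal this one, so it is enough to compute it. The plan is to read it off from the full orthogonal formula \eqref{eq:Wh-dim-O} from \cite[Theorem~1.9]{Wang}, whose hypotheses hold because $\cO_\lambda\in\sO(\pi)^{\mx}$.

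First we pin down the ingredients. By Lemma~\ref{lem:split}, $\lambda$ very even forces $\dim V=2n$ to be even and $V$ to be split, with $\cO_\lambda$ the unique rational $G^F$-orbit of partition $\lambda$. By \cite[Theorem~1.1]{Wang}, every orbit in $\sO(\pi)^{\mx}=\sO(\pi)^{\mx,\st}$ has partition $\lambda$. Hence
\[
\sO(\pi)^{\mx}=\cO_\lambda,\qquad |\sO(\pi)^{\mx}|=1 .
\]
For very even $\lambda$, every factor of the reductive centralizer \eqref{eq:reductive-centralizer} is symplectic, hence connected. The unipotent radical is also connected, so $C_G(X)$ is connected and $A_{\mathrm O}(\cO^{\st})=1$, as recorded in the introduction.

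Substituting these into the even-dimensional case of \eqref{eq:Wh-dim-O} gives
\[
\dim\Wh_{\cO_\lambda}(\pi)=\frac{|A_{\mathrm O}(\cO^{\st})|}{|\sO(\pi)^{\mx}|\,|\bA(\cO^{\st*})|}=\frac{1}{|\bA(\cO^{\st*})|}.
\]
The left side is a positive integer, since $\cO_\lambda\in\sO(\pi)$. The right side is at most $1$, because $|\bA(\cO^{\st*})|\geqslant1$. So both equal $1$; in particular $|\bA(\cO^{\st*})|=1$ follows automatically, and no independent computation of the dual-side canonical quotient is needed.

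The main point to check is that the convention identifying $A_{\mathrm O}(\cO^{\st})$ in \cite[Theorem~1.9]{Wang} is the centralizer component group in $\mathrm O$, rather than some enlarged group, so that it really is trivial for very even partitions. The paper states $A_{\mathrm O}=1$ in this case with that same convention, so I expect this to go through directly.

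As a fallback, one could use the multiplicity-one property of generalized Gelfand–Graev representations at maximal orbits with connected centralizer. Here $M_{\psi}=C_G(\{X,H,Y\})$ is connected, and one would argue via Kawanaka's description that $\Wh$ at a wavefront orbit affords a representation of $A(\cO^{\st})$ of dimension bounded by $|A|$.
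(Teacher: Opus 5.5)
Your proposal is correct and follows the paper's proof essentially verbatim. You reduce to $\dim\Wh_{\cO_\lambda}(\pi)$ via Proposition~\ref{prop:separation}, use connectedness of the centralizer and uniqueness of the rational orbit to get $|A_{\mathrm O}(\cO^{\st})|=|\sO(\pi)^{\mx}|=1$, and conclude from \cite[Theorem~1.9]{Wang} that the positive integer $1/|\bA(\cO^{\st*})|$ must equal $1$, so the fallback via Kawanaka's bound is unnecessary.
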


\begin{proof}
The centralizer in the full orthogonal group of {a nilpotent element whose partition is very even} is connected, so
$|A_{\mathrm{O}}(\cO^{\st}_\lambda)|=1$, and there is only one rational
$G^F$-orbit with partition $\lambda$. {By \cite[Theorem~1.9]{Wang}, we have} $\dim\Wh_{\cO_\lambda}(\pi)=1/|\bA(\cO^{\st*})|$. The left-hand side is a positive integer, and the denominator is the order of a
finite group. Hence both are equal to $1$.
\end{proof}

\subsection{Maximal orbits that are not very even}\label{subsec:not-very-even}

Let $\pi\in\Irr(G^F)$ with $\ResG\pi=\sigma^{\I}\oplus\sigma^{\II}$, and assume
that the partition $\lambda$ of $\cO_{\pi}^{\st}$ is not very even. For
$\cO\in\sO(\pi)^{\mx}$ let $\cO_0$ be its unique preimage under $\pr$. By
Corollary~\ref{cor:half}, $\Wh_{\cO_0}(\sigma^{\I})\neq0$ and
$\Wh_{\cO_0}(\sigma^{\II})\neq0$. By Lemma~\ref{lem:pr-WF} and
\cite[Theorem~1.1]{Wang}, every partition occurring in either wavefront set is dominated by $\lambda$; hence, writing $\cO^{\st}_\lambda$ for the unique stable
$G_0$-orbit with partition $\lambda$,
\[
\cO_0^{\st}=\cO_{\sigma^{\I}}^{\st}=\cO_{\sigma^{\II}}^{\st}
=\cO^{\st}_\lambda.
\]

Fix $\sigma\in\{\sigma^{\I},\sigma^{\II}\}$ and let $\cO_1\in\sO(\sigma)$. By
Lemma~\ref{lem:pr-WF} and \cite[Theorem~1.1]{Wang}, there is an orbit
$\cO\in\sO(\pi)^{\mx}$ such that $\pr(\cO_1)\leqslant_G\cO$. Let $\cO_0$ be the
preimage of $\cO$. Then $\cO_0\in\sO(\sigma)$ by Corollary~\ref{cor:half}, and
$\cO_1\leqslant_{G_0}\cO_0$ by Lemma~\ref{lem:order}, since the partition of
$\cO_0$ is not very even. Conversely, let $\cO\in\sO(\pi)^{\mx}$ with preimage
$\cO_0$. If $\cO_0<_{G_0}\cO_1$ for some $\cO_1\in\sO(\sigma)$, then
$\cO<_G\pr(\cO_1)$ by Lemma~\ref{lem:order}, while $\pr(\cO_1)\in\sO(\pi)$ by
Lemma~\ref{lem:pr-WF}; this contradicts the maximality of $\cO$. \rev{Finally,
if $\cO_1\in\sO(\sigma)$ satisfies $\cO_1^{\st}=\cO^{\st}_\lambda$, then
$\pr(\cO_1)\in\sO(\pi)$ has partition $\lambda$, so
$\pr(\cO_1)\in\sO(\pi)^{\mx,\st}=\sO(\pi)^{\mx}$.} Therefore
\[
\sO(\sigma)^{\mx}=\sO(\sigma)^{\mx,\st}=\pr^{-1}\bigl(\sO(\pi)^{\mx}\bigr).
\]
Together with Theorem~\ref{thm:irred} and
Proposition~\ref{prop:separation}, this proves Theorem~\ref{thm:A}, which we
restate here.

\begin{theorem}\label{thm:max}
For every $\sigma\in\Irr(G_0^F)$ we have
$\sO(\sigma)^{\mx}=\sO(\sigma)^{\mx,\st}$. If $\sigma$ occurs in $\ResG\pi$ with
$\pi\in\Irr(G^F)$, then
\[
\sO(\sigma)^{\mx}
=\bigcup_{\substack{\cO_0\in\Nil^F/\Ad(G_0^F)\\
\pr(\cO_0)\in\sO(\pi)^{\mx},\ \cO_0^{\st}=\cO_{\sigma}^{\st}}}\cO_0.
\]
Let $\lambda$ be the partition of $\cO_{\pi}^{\st}$. If $\lambda$ is not
very even, $\pr$ induces a bijection
$\sO(\sigma)^{\mx}\xrightarrow{\sim}\sO(\pi)^{\mx}$. If $\lambda$ is very even,
write $\ResG\pi=\sigma^{\I}\oplus\sigma^{\II}$ and
$\pr^{-1}(\{\cO_\lambda\})=\{\cO_\lambda^{\I},\cO_\lambda^{\II}\}$; after
relabeling the constituents, $\sO(\sigma^{\I})^{\mx}=\cO_\lambda^{\I}$ and
$\sO(\sigma^{\II})^{\mx}=\cO_\lambda^{\II}$.
\end{theorem}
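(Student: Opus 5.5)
The plan is to assemble Theorem~\ref{thm:max} from the three cases already treated, organized by Lemma~\ref{lem:clifford}. Fix $\sigma\in\Irr(G_0^F)$ and an irreducible $\pi\in\Irr(G^F)$ whose restriction contains $\sigma$. Lemma~\ref{lem:clifford} says that either $\ResG\pi=\sigma$ is irreducible, or $\ResG\pi=\sigma^{\I}\oplus\sigma^{\II}$ with $\sigma$ one of the two constituents. Let $\lambda$ be the partition of $\cO_\pi^{\st}$. I would split into three cases: (a) irreducible restriction; (b) reducible restriction with $\lambda$ not very even; (c) reducible restriction with $\lambda$ very even. Proposition~\ref{prop:irred-max} shows that $\lambda$ cannot be very even in case (a), so these three cases are exhaustive.

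\emph{Case (a).} Theorem~\ref{thm:irred} gives $\sO(\sigma)^{\mx}=\sO(\sigma)^{\mx,\st}=\pr^{-1}(\sO(\pi)^{\mx})$, and it shows that $\pr$ restricts to a bijection onto $\sO(\pi)^{\mx}$. To obtain the displayed formula, I would use Proposition~\ref{prop:irred-max}. It shows that every element of $\pr^{-1}(\sO(\pi)^{\mx})$ already lies in $\cO_\sigma^{\st}$. Hence imposing the extra condition $\cO_0^{\st}=\cO_\sigma^{\st}$ removes nothing, and the union in the statement is exactly $\pr^{-1}(\sO(\pi)^{\mx})$.

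\emph{Case (b).} The argument of Section~\ref{subsec:not-very-even} gives $\sO(\sigma)^{\mx}=\sO(\sigma)^{\mx,\st}=\pr^{-1}(\sO(\pi)^{\mx})$. The same section shows that every preimage lies in $\cO^{\st}_\lambda=\cO_\sigma^{\st}$. Because $\lambda$ is not very even, Lemma~\ref{lem:split} says each maximal orbit has exactly one preimage, so $\pr$ is again a bijection. As in case (a), the condition on the stable orbit is automatic.

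\emph{Case (c).} Proposition~\ref{prop:separation} provides a labeling with $\sO(\sigma^{\I})^{\mx}=\cO_\lambda^{\I}$ and $\sO(\sigma^{\II})^{\mx}=\cO_\lambda^{\II}$, and it also gives $\cO_{\sigma^{\I}}^{\st}=(\cO_\lambda^{\I})^{\st}$. Every partition occurring in $\sO(\pi)$ is dominated by $\lambda$. So $\sO(\sigma^{\I})^{\mx,\st}$ consists of orbits of $\sO(\sigma^{\I})$ with stable orbit $(\cO_\lambda^{\I})^{\st}$. By Lemma~\ref{lem:split} that stable orbit contains only the rational orbit $\cO_\lambda^{\I}$, and therefore the two maximal sets agree. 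For the formula, $\sO(\pi)^{\mx}=\cO_\lambda$ because $\cO_\lambda$ is the unique rational orbit with partition $\lambda$. Its two preimages are $\cO_\lambda^{\I}$ and $\cO_\lambda^{\II}$, and the condition $\cO_0^{\st}=\cO_{\sigma}^{\st}$ selects exactly the correct one. The same reasoning applies to $\sigma^{\II}$.

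The only point needing care, though it is not a real obstacle, is checking in each case that the condition on the stable orbit in the displayed union neither removes nor adds any orbit. Cases (a) and (b) settle this because the partition is not very even, and case (c) settles it because the two stable orbits are distinct. Since $\sigma$ was arbitrary, the equality $\sO(\sigma)^{\mx}=\sO(\sigma)^{\mx,\st}$ holds in every case.
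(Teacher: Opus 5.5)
Your proposal is correct and follows essentially the same approach as the paper, which also obtains Theorem~\ref{thm:max} by combining Theorem~\ref{thm:irred} (irreducible restriction), the argument of Section~\ref{subsec:not-very-even} (reducible restriction, $\lambda$ not very even) and Proposition~\ref{prop:separation} (very even $\lambda$). The paper does this without comment; you additionally spell out why the three cases are exhaustive (via Proposition~\ref{prop:irred-max}) and why the condition $\cO_0^{\st}=\cO_\sigma^{\st}$ in the displayed union neither adds nor removes orbits.
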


\begin{corollary}\label{cor:inclusion}
Every $\sigma\in\Irr(G_0^F)$ satisfies
$\sO(\sigma)\subset\overline{\sO(\sigma)^{\mx}}$.
\end{corollary}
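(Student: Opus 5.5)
We must prove Corollary~\ref{cor:inclusion}: every $\cO_1\in\sO(\sigma)$ satisfies $\cO_1\leqslant_{G_0}\cO_0$ for some $\cO_0\in\sO(\sigma)^{\mx}$. We fix $\pi\in\Irr(G^F)$ with $\sigma$ occurring in $\ResG\pi$, let $\lambda$ be the partition of $\cO_\pi^{\st}$, and split into the three cases already treated before Theorem~\ref{thm:max}.

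\emph{Case 1: $\ResG\pi$ is irreducible.} Theorem~\ref{thm:irred} already gives $\sO(\sigma)=\overline{\sO(\sigma)^{\mx}}$, so nothing remains.

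\emph{Case 2: $\ResG\pi$ is reducible and $\lambda$ is not very even.} We follow the argument of Section~\ref{subsec:not-very-even}. Given $\cO_1\in\sO(\sigma)$, Lemma~\ref{lem:pr-WF} gives $\pr(\cO_1)\in\sO(\pi)$. By \cite[Theorem~1.1]{Wang} there is $\cO\in\sO(\pi)^{\mx}$ with $\pr(\cO_1)\leqslant_G\cO$. Let $\cO_0$ be the unique preimage of $\cO$ (Lemma~\ref{lem:split}). Since the partition of $\cO_0$ is not very even, Lemma~\ref{lem:order}(ii) yields $\cO_1\leqslant_{G_0}\cO_0$. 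Moreover $\cO_0\in\sO(\sigma)^{\mx}=\pr^{-1}(\sO(\pi)^{\mx})$ by Theorem~\ref{thm:max}.

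\emph{Case 3: $\lambda$ is very even.} Label the constituents so that $\sO(\sigma^{\I})^{\mx}=\cO_\lambda^{\I}$, as in Proposition~\ref{prop:separation}. Its proof already shows that each $\cO_1\in\sO(\sigma^{\I})$ satisfies the two required conditions:
\begin{itemize}
\item $\pr(\cO_1)\leqslant_G\cO_\lambda$, by Lemma~\ref{lem:pr-WF} and \cite[Theorem~1.1]{Wang};
\item $\cO_1^{\st}\subset\overline{(\cO_\lambda^{\I})^{\st}}$, by \cite[Proposition~15.2]{Tay}.
\end{itemize}
By Definition~\ref{def:order} these give $\cO_1\leqslant_{G_0}\cO_\lambda^{\I}$, and the same argument applies to $\sigma^{\II}$.

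The only nontrivial input is the stable closure condition in Case~3. It cannot be deduced from the order on $G^F$-orbits, since $\cO_\lambda^{\I}$ and $\cO_\lambda^{\II}$ have the same image under $\pr$. Instead it rests on Taylor's result that every orbit in the wavefront set lies in the closure of the Kawanaka wavefront orbit. In Cases~1 and~2 this issue does not arise, because Lemma~\ref{lem:order}(ii) reduces the comparison to $\leqslant_G$.
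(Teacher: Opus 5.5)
Your argument is correct, but it takes a much longer route than the paper, whose proof is one line. The set $\sO(\sigma)$ consists of finitely many rational orbits, $\leqslant_{G_0}$ is a partial order on it, and $\sO(\sigma)^{\mx}$ is by definition its set of maximal elements. So, starting from any $\cO_1\in\sO(\sigma)$ and repeatedly passing to a strictly larger element of $\sO(\sigma)$, you must stop after finitely many steps at a maximal element, and this element lies above $\cO_1$ by transitivity. No representation theory and no case distinction enter.

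Your case analysis proves the sharper fact that each $\cO_1$ lies below one of the \emph{explicitly identified} maximal orbits of Theorem~\ref{thm:irred}, Section~\ref{subsec:not-very-even} and Proposition~\ref{prop:separation}. However, that domination step is exactly what was already done in proving Theorem~\ref{thm:max}, so it is redundant for the corollary as stated.

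Your closing remark is also misleading. The inclusion needs no external input at all, and the paper uses Taylor's closure result only to identify $\sO(\sigma^{\I})^{\mx}$ as $\cO_\lambda^{\I}$. Even that use can be avoided. If $\pr(\cO_1)<_G\cO_\lambda$, then Lemma~\ref{lem:order}(i) already gives $\cO_1<_{G_0}\cO_\lambda^{\I}$. The only other orbit with $\pr(\cO_1)=\cO_\lambda$ is $\cO_\lambda^{\II}$, which the vanishing in \eqref{eq:separation} excludes from $\sO(\sigma^{\I})$. So, contrary to your remark, the comparison does reduce to $\leqslant_G$ once that vanishing is known.
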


\begin{proof}
Since there are finitely many rational orbits, every $\cO_0\in\sO(\sigma)$
satisfies $\cO_0\leqslant_{G_0}\cO_1$ for some $\cO_1\in\sO(\sigma)^{\mx}$.
\end{proof}

\subsection{Proof of Theorem~\ref{thm:C}}

\rev{Let $\sigma$, $\pi$ and $\cO^{\st}=\cO_\sigma^{\st}$ be as in
Theorem~\ref{thm:C}. By Propositions~\ref{prop:irred-max}
and~\ref{prop:separation} and Section~\ref{subsec:not-very-even}, the
partition of $\cO^{\st}$ is that of $\cO_\pi^{\st}$.}
Suppose first that the partition of $\cO^{\st}$ is not very even. By
\eqref{eq:det} there is an exact sequence
\[
1\longrightarrow A_{\mathrm{SO}}(\cO^{\st})\longrightarrow
A_{\mathrm{O}}(\cO^{\st})\xrightarrow{\ \det\ }\{\pm1\}\longrightarrow1,
\]
so $|A_{\mathrm{O}}(\cO^{\st})|=2|A_{\mathrm{SO}}(\cO^{\st})|$. By
Theorem~\ref{thm:max}, $|\sO(\sigma)^{\mx}|=|\sO(\pi)^{\mx}|$. If $\dim V$ is
odd, the restriction is irreducible \rev{(Section~\ref{subsec:irred})}, and substitution of these identities into the
odd case of \eqref{eq:Wh-dim-O} together with \eqref{eq:irred-res} gives the second
case of \eqref{eq:Wh-dim-SO}. If $\dim V$ is even and the restriction is
irreducible, the same substitution into the even case of \eqref{eq:Wh-dim-O}
gives the first case. If the restriction is reducible, \eqref{eq:half} divides
$\dim\Wh_{\cO}(\pi)$ by two, yielding the second case again.

If the partition is very even, the restriction is reducible, both component
groups are trivial, and $\sO(\pi)^{\mx}$ and $\sO(\sigma)^{\mx}$ each consist of
one rational orbit. Proposition~\ref{prop:separation} and \eqref{eq:Wh-dim-O}
give the second case of \eqref{eq:Wh-dim-SO}, and $\dim\Wh_{\cO_0}(\sigma)=1$ by
Corollary~\ref{cor:very-even-one}.

\rev{In each case, the right-hand side of \eqref{eq:Wh-dim-SO} is independent
of $\cO_0$. Finally, $\dim\Wh_{\cO}(\pi)$ is a power of two by
\cite[Theorem~1.9]{Wang}, and passing to $\sigma$ either preserves this number
or halves it. As $\dim\Wh_{\cO_0}(\sigma)$ is a positive integer, it is also a
power of two.} \qed

\begin{remark}
\rev{The formula in Theorem~\ref{thm:C} is independent of the choice of $\pi$. If $\sigma$
extends to $G^F$, its two extensions $\pi$ and $\pi\otimes\det$ have the same
wavefront set and Whittaker multiplicities
(Proposition~\ref{prop:restriction}), so \eqref{eq:Wh-dim-O} gives the same
value of $|\bA(\cO^{\st*})|$ for both. Otherwise $\IndG\sigma$ is the only
irreducible representation of $G^F$ whose restriction contains $\sigma$.}
\end{remark}

% ===============================================================
\section{Nonvanishing at smaller very even orbits}\label{sec:closure}
% ===============================================================

We prove that the generalized Whittaker models of $\sigma$ are nonzero at both
rational $G_0^F$-orbits attached to a very even partition lying strictly below
an orbit in $\sO(\sigma)^{\mx}$. The proof uses an intermediate orbit obtained
by replacing two equal even parts $a,a$ by $a+1,a-1$, together with a comparison
of generalized Gelfand--Graev representations. All algebraic groups and gradings below are defined over $\kk$.
Their Lie algebras and weight spaces are taken over $\kb$.

\subsection{An intermediate orbit}

\begin{lemma}\label{lem:intermediate}
Let $D_\lambda$ be the signed Young diagram of a rational $G^F$-orbit which corresponds to a very even partition $\lambda$,
and let $D$ be a signed Young diagram for the same quadratic space with
$D_\lambda<_GD$. Then there exist an even part $a$ of $\lambda$ and a signed
Young diagram $D_\nu$, obtained from $D_\lambda$ by replacing two parts equal to
$a$ by $a+1$ and $a-1$, such that
\[
D_\lambda<_GD_\nu\leqslant_GD .
\]
The partition $\nu$ is not very even.
\end{lemma}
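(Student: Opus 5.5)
We construct $D_\nu$ in two steps: first the partition $\nu$ by an elementary dominance move, then its signs by one free choice. Write $\mu$ for the partition of $D$. Since $\lambda$ is very even, there is exactly one rational $G^F$-orbit with partition $\lambda$ (Lemma~\ref{lem:split}). Hence $D_\lambda<_GD$ forces $\mu\neq\lambda$, and by \eqref{eq:full-order} $\mu$ strictly dominates $\lambda$.

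\emph{Step 1: the partition.} We work with row partial sums. Write $\lambda=(\lambda_1\geqslant\lambda_2\geqslant\cdots)$ and $\mu$ similarly, and put $d_i=\sum_{j\leqslant i}(\mu_j-\lambda_j)\geqslant0$. Let $k$ be the least index with $d_k>0$. Then $\mu_j=\lambda_j$ for $j<k$ and $\mu_k>\lambda_k=:a$, and $a$ is even and positive.

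Let $l$ be the largest index with $\lambda_l=a$. Since $\lambda$ is very even, $a$ has even multiplicity, so $l>k$. Define $\nu$ from $\lambda$ by replacing $\lambda_k=a$ by $a+1$ and $\lambda_l=a$ by $a-1$. Then $\nu$ is again a partition, with $\nu$ and $\lambda$ differing only in rows $k$ and $l$. The remaining parts equal to $a$ still have even multiplicity, and the new parts $a\pm1$ are odd. Hence $\nu$ is an orthogonal partition, and it is not very even.

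The partial sums of $\nu$ exceed those of $\lambda$ by exactly $1$ for indices $i\in[k,l-1]$ and agree otherwise. So $\lambda<\nu\leqslant\mu$ amounts to $d_i\geqslant1$ for $k\leqslant i\leqslant l-1$. Suppose instead $d_i=0$ for some such $i$. Since $d_{i-1}\geqslant0$, this gives $\mu_i\leqslant a$. If $\mu_i=a$, then $d_{i-1}=0$; iterating backwards reaches $d_k=0$, a contradiction. Hence $\mu_i<a$, so $\mu_{i+1}\leqslant\mu_i<a=\lambda_{i+1}$ because $i+1\leqslant l$. This yields $d_{i+1}<0$, again a contradiction.

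\emph{Step 2: the signs.} The only odd parts of $\nu$ are $a+1$ and $a-1$, each with multiplicity one. Their signs $b_+,b_-$ are subject only to the discriminant condition, which fixes the product $b_+b_-$. This leaves one free choice.

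We compare $\sgn_{2s}$ across the diagrams level by level.
\begin{itemize}
\item For $2s\geqslant a+2$ the parts $a\pm1$ are not involved, so $\sgn_{2s}(D_\nu)=\sgn_{2s}(D_\lambda)$.
\item For $2s\leqslant a-2$, all excluded parts (those $<2s$) are even with sign $+$ and even multiplicity. The excluded cross terms $\varepsilon(-1)^{n_in_j}$ are therefore trivial, so $\sgn_{2s}=\disc(V)$ for every diagram involved.
\item At $2s=a$ only $a+1$ enters, so $\sgn_a(D_\nu)$ equals $b_+$ times the contribution of the parts above $a$, which is the one free sign.
\end{itemize}
Column counts change only at level $a$: we have $C_a(D_\nu)=C_a(D_\lambda)-1$ and $C_r(D_\nu)=C_r(D_\lambda)$ for $r\neq a$.

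\emph{Checking $D_\lambda<_GD_\nu\leqslant_GD$.} For $D_\lambda<_GD_\nu$, equality of $C_{2s}$ never occurs at $2s=a$. At all other even levels the signs agree by the comparison above.

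For $D_\nu\leqslant_GD$, the column inequalities are the dominance $\nu\leqslant\mu$ from Step 1. Consider an even level $2s\neq a$ with $C_{2s}(D_\nu)=C_{2s}(D)$. There $C_{2s}(D_\lambda)=C_{2s}(D)$ as well, and $\sgn_{2s}(D_\nu)=\sgn_{2s}(D_\lambda)=\sgn_{2s}(D)$ by $D_\lambda\leqslant_GD$. At $2s=a$, if $C_a(D_\nu)=C_a(D)$, we choose $b_+$ so that $\sgn_a(D_\nu)=\sgn_a(D)$; otherwise $b_+$ is arbitrary. Finally $D_\nu\neq D_\lambda$, so the first inequality is strict.

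The main obstacle is the sign bookkeeping at levels $2s\leqslant a-2$, specifically verifying that the excluded even parts contribute trivially. I expect this to follow from the evenness of their multiplicities, but it should be checked carefully against the discriminant convention.
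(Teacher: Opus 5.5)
Your proposal is correct and follows the paper's proof. It uses the same choice of $a$ as the first row where $\mu$ exceeds $\lambda$, the same diagram $D_\nu$ with the sign at level $a$ set to $\sgn_a(D)$ exactly when $C_a(D_\nu)=C_a(D)$, and the same level-by-level sign check; your worry at $2s\leqslant a-2$ is settled directly by $b_+b_-\varepsilon(-1)=\disc(V)=+$. The paper verifies $\nu\leqslant\mu$ more quickly, via $C_a(D_\lambda)-C_a(D)=\sum_{i\geqslant i_0}\max\{\mu_i-a,0\}>0$ and the fact that $C_r$ changes only at $r=a$, instead of your row-sum iteration; and your claim $l>k$ needs the easy remark that $k$ is the first index of its block of $a$'s, since $\mu_{k-1}=\lambda_{k-1}$ and $\mu_k>a$ force $\lambda_{k-1}\neq a$.
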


\begin{proof}
Since $D_\lambda<_G D$, we have $\lambda<\mu$ in the dominance order,
where $\mu$ is the partition of $D$.
Let $i_0$ be the smallest index such that
$\lambda_{i_0}<\mu_{i_0}$, and set $a=\lambda_{i_0}$.
Then $\lambda_i=\mu_i$ for all $i<i_0$, and hence
\[
C_a(D_\lambda)-C_a(D)
=\sum_{i\geq i_0}\max\{\mu_i-a,0\}
\geq \mu_{i_0}-a>0.
\]

Choose $b=\sgn_a(D)$ if
$C_a(D_\lambda)-C_a(D)=1$, and choose either sign otherwise.
Let $D_\nu$ be the diagram obtained from $D_\lambda$ by
replacing two parts $a$ with the signed parts
$(a+1,b)$ and $(a-1,\varepsilon(-1)b)$,
leaving all other parts unchanged. Then
\begin{equation}\label{eq:C-nu}
C_r(D_\nu)=
\begin{cases}
C_a(D_\lambda)-1,&r=a,\\
C_r(D_\lambda),&r\neq a.
\end{cases}
\end{equation}
The two odd parts occur once each, and all remaining parts
are even with even multiplicity. Their signs have product
$\varepsilon(-1)$, which cancels the factor
$\varepsilon(-1)^{1\cdot1}$ arising from the pair
$(a+1,a-1)$. Hence
$D_\nu$ is a signed Young diagram for $V$, and
\begin{equation}\label{eq:sgn-nu}
\sgn_{2s}(D_\nu)=
\begin{cases}
b,&2s=a,\\
+,&2s\neq a.
\end{cases}
\end{equation}
Since $\sgn_{2s}(D_\lambda)=+$ for all $s$, equations \eqref{eq:C-nu} and
\eqref{eq:sgn-nu} together with $D_\lambda\leqslant_GD$ verify the conditions of
\eqref{eq:full-order} for $D_\nu\leqslant_GD$; they also show
$D_\lambda<_GD_\nu$.
\end{proof}

\subsection{Comparison of generalized Gelfand--Graev representations}

The following comparison, which uses finite Heisenberg groups and induction in
stages, is a finite-group version of the comparison of models underlying
\cite[Section~3.2]{GGS}.

\begin{lemma}\label{lem:comparison}
Let $\cO_\lambda^{\I},\cO_\lambda^{\II}$ be the two rational $G_0^F$-orbits of a
very even partition $\lambda$, let $a$ be an even part of $\lambda$, and let
$\cO_\nu$ be a rational $G_0^F$-orbit whose signed Young diagram is obtained by
replacing two parts $a$ by $(a+1,b)$ and $(a-1,\varepsilon(-1)b)$, all other
parts being unchanged. For $\rev{\iota}\in\{\I,\II\}$ there is a surjective
$G_0^F$-homomorphism
\begin{equation}\label{eq:surj}
\Gamma^{G_0}_{\cO_\lambda^{\rev{\iota}}}\twoheadrightarrow\Gamma^{G_0}_{\cO_\nu}.
\end{equation}
Consequently
$\Wh_{\cO_\nu}(\tau)\hookrightarrow\Wh_{\cO_\lambda^{\rev{\iota}}}(\tau)$ for every
representation $\tau$ of $G_0^F$.
\end{lemma}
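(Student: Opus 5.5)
The plan is to build the surjection in stages, following the Whittaker-pair comparison of \cite[Section~3.2]{GGS} in the finite-field setting. Fix an $\sltwo(\kk)$-triple $\{X,H,Y\}$ for $\cO_\lambda^{\iota}$ and its grading $\g=\bigoplus_j\g_j$. Write $\omega_Y=\Ind^{G_{\geqslant1}^F}_{G_{\geqslant1.5}^F}\psi_Y$ for the Heisenberg representation. First I would record the two standard induction-in-stages identities:
\[
\Gamma^{G_0}_{\cO_\lambda^{\iota}}\simeq\Ind^{G_0^F}_{G_{\geqslant1}^F}\omega_Y,
\qquad
\Ind^{G_{\geqslant1}^F}_{G_{\geqslant2}^F}\psi_Y\simeq\omega_Y^{\oplus q^{\dim\g_1/2}}.
\]
The second holds because $\omega_Y$ is the unique irreducible representation of the Heisenberg quotient $G_{\geqslant1}^F/\ker\psi_Y$ with central character $\psi_Y$. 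Hence $\Gamma_{\cO_\lambda^\iota}$ and $\Ind^{G_0^F}_{G_{\geqslant2}^F}\psi_Y$ have the same irreducible constituents, and every quotient of the latter by a $G_{\geqslant1}$-stable piece with central character $\psi_Y$ is a quotient of a multiple of $\Gamma_{\cO_\lambda^\iota}$. To obtain a quotient of $\Gamma_{\cO_\lambda^\iota}$ itself rather than of a multiple, I would work directly with the $G_{\geqslant1.5}^F$ realization. Concretely, I would choose the Lagrangian $\mathfrak l\subset\g_1$ adapted to the construction below, and produce characters of $G_{\geqslant1.5}^F$ of the form $u\mapsto\psi(\langle Y+f',\Cay(u)\rangle)$ with $f'\in\g_{-1}^F$ paired against $\mathfrak l$.

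The key construction is a rational element $f'\in\g_{-1}^F$ such that $Y+f'$ lies in the rational orbit $\cO_\nu$ (transported to negative degree, that is, in the orbit of $Y_\nu$). I would reduce this to the single block $L_a\otimes W_a$ with $\dim W_a=2$. Inside $\so(L_a\otimes W_a)\cong\so_{2a}$, the element $Y$ has partition $(a,a)$. In the $\sltwo$-decomposition $\so(L_a\otimes W_a)\supset\mathrm{Sym}^2$-type pieces, the degree $-1$ part contains a vector $f'$ that couples the two Jordan chains and turns them into chains of lengths $a+1$ and $a-1$. Among the choices of $f'$, the quadratic form on the new one-dimensional multiplicity spaces of $L_{a\pm1}$ can be adjusted. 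This is where the sign $b$ (and $\varepsilon(-1)b$ for the partner) is realized; both signs are attainable by scaling $f'$ by a nonsquare. The other blocks $L_e\otimes W_e$ are left untouched, because $f'$ lies in the centralizer-compatible part of $\so(L_a\otimes W_a)$. Together with Lemma~\ref{lem:split}, this identifies the rational orbit of $Y+f'$ with $\cO_\nu$, using the parametrization by signed Young diagrams. For $\iota=\II$ I would conjugate everything by $c$, using Lemma~\ref{lem:conj} and Proposition~\ref{prop:conj}; since $\Ad(c)\cO_\nu=\cO_\nu$, the same target is obtained.

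Next I would compare $\Ind^{G_0^F}_{G_{\geqslant1.5}^F}\psi_{Y+f'}$ with $\Gamma_{\cO_\nu}$. The pair $(H,Y+f')$ is a Whittaker pair in the sense of \cite{GGS}, but it is not neutral. The finite-field root-exchange lemma, which conjugates by suitable unipotent subgroups of $\g_{1}$ and $\g_{-1}$ with Fourier analysis over $\kk$, should identify this induced representation with $\Gamma^{G_0}_{\cO_\nu}$ up to isomorphism, or at least give a surjection onto it. The surjection \eqref{eq:surj} would then come from a deformation step: averaging over the $\g_{-1}$-direction. Inside $\Ind_{G_{\geqslant2}^F}\psi_Y$ one extends $\psi_Y$ to $G_{\geqslant2}^F\exp(\mathfrak l)$ by the character of $Y+f'$, and inducing the surjection of characters yields a $G_0^F$-map. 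To see that it factors through a single copy of $\Gamma_{\cO_\lambda^\iota}$, I would check that the $M_{\psi}$-stable Lagrangian chosen above makes both characters live on the same group $G_{\geqslant1.5}^F$.

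The main obstacle is this last identification: showing that the induced representation attached to the non-neutral pair $(H,Y+f')$ maps onto $\Gamma_{\cO_\nu}$, with the correct rational sign $b$. I would first carry it out explicitly in the rank-one-type situation $\so_{2a}$ with $\lambda=(a,a)$ and $\nu=(a+1,a-1)$, checking it for small $a$ by matrices. The general case would then follow by embedding this into the Levi subgroup stabilizing the decomposition \eqref{eq:isotypic} and inducing parabolically. The final assertion $\Wh_{\cO_\nu}(\tau)\hookrightarrow\Wh_{\cO_\lambda^\iota}(\tau)$ is immediate from \eqref{eq:surj} by applying $\Hom_{G_0^F}(-,\tau)$ and Frobenius reciprocity.
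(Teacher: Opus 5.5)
\textbf{There is a genuine gap, and it is in your key construction.} Since $\lambda$ is very even, every $\gamma$-weight on $V_{\kb}$ is odd: the weights of $L_d$ for even $d$ are $d-1,d-3,\dots,1-d$. The weights on $\g\subset\mathrm{End}(V_{\kb})$ are differences of odd integers, so $\g_j=0$ for every odd $j$.

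\emph{Consequences for your construction.} We have $\g_{-1}^F=0$. The only available $f'$ is $0$, and $Y+f'=Y$ still has partition $\lambda$. Likewise $\g_1=0$, so $\mathfrak l=0$, $\omega_Y=\psi_Y$ and $G_{\geqslant1.5}^F=G_{\geqslant2}^F$. Your Lagrangian adjustment and the ``averaging over the $\g_{-1}$-direction'' are therefore vacuous. (Even when $\g_{-1}\neq0$, the pair $(H,Y+f')$ with $0\neq f'\in\g_{-1}$ is not a Whittaker pair, since that requires $\ad H$-weight $-2$.)

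\emph{What is needed instead.} Coupling two chains of length $a$ into chains of lengths $a+1,a-1$ requires an element of $\gamma$-weight $0$. The paper takes an $\sltwo(\kk)$-triple $\{X_1,H_1,Y_1\}$ in $\spl(W_a^0)$, for a nondegenerate plane $W_a^0\subset W_a$, acting through the second tensor factor. Then $\{X+X_1,H+H_1,Y+Y_1\}$ is a triple for $\cO_\nu$, because $L_a\otimes L_2\simeq L_{a+1}\oplus L_{a-1}$. The neutral element changes from $H$ to $H+H_1$. Hence $\Gamma^{G_0}_{\cO_\lambda^{\iota}}$ and $\Gamma^{G_0}_{\cO_\nu}$ are induced from different unipotent groups, and no single group $G_{\geqslant1.5}^F$ carries both characters.

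The step you flag as the main obstacle, and leave as ``should identify $\dots$ or at least give a surjection'', is therefore the whole content of the lemma. It needs an exchange between these groups, which the paper carries out as follows.
\begin{itemize}
\item Use the bigrading $\g_{i,j}$ of $(\gamma,\gamma_1)$, with $\varphi=\langle Y,\cdot\rangle$ and $\varphi_1=\langle Y_1,\cdot\rangle$. The pairing $(A,B)\mapsto\varphi([A,B])$ on $\g_{2,-2}\times\g_{0,2}$ has zero left kernel and right kernel $\kb X_1$. It is therefore perfect on $\g_{2,-2}\times\mathfrak b'$, where $\mathfrak b'=\g_{0,2}\cap\ker\varphi_1$.
\item Let $\mathfrak n_0$ be the sum of the $\g_{i,j}$ with $i\geqslant2$ other than $\g_{2,-2}$, and put $\mathfrak r=\mathfrak n_0\oplus\mathfrak b'$. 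Stone--von Neumann, in the Heisenberg quotient of $\exp(\g_{\geqslant2}\oplus\mathfrak b')^F$, exchanges the Lagrangian $\g_{2,-2}$ for $\mathfrak b'$. This gives $\Gamma^{G_0}_{\cO_\lambda^{\iota}}\simeq\Ind_{R^F}^{G_0^F}\chi_\varphi$.
\item With $\gamma_\nu=\gamma\gamma_1$, the algebra $\mathfrak r$ lies in $\g_{\geqslant1}(\gamma_\nu)$, is isotropic for $(\varphi+\varphi_1)([\cdot,\cdot])$, and satisfies $\varphi_1|_{\mathfrak r}=0$. So the Lagrangian for $\nu$ can be chosen with $R\subset Q$ and $\chi_{\varphi+\varphi_1}|_{R^F}=\chi_\varphi$.
\item Inducing the surjection $\Ind_{R^F}^{Q^F}\chi_\varphi\twoheadrightarrow\chi_{\varphi+\varphi_1}$ then gives \eqref{eq:surj}.
\end{itemize}

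Your fallback of working in $\so(L_a\otimes W_a)$ and ``inducing parabolically'' also fails as stated. The stabilizer of the orthogonal decomposition \eqref{eq:isotypic} is a product of orthogonal groups, not a Levi subgroup. Your treatment of $\iota=\II$ by conjugation with $c$, and the final Frobenius reciprocity step, are fine.
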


\begin{proof}
\emph{Step 1: the $\sltwo(\kk)$-triple for $\cO_\nu$.}
Fix $X\in\cO_\lambda^{\rev{\iota}}$ and an $\sltwo(\kk)$-triple $\{X,H,Y\}$, and use the
decomposition \eqref{eq:isotypic}. Choose a nondegenerate two-dimensional
subspace $W_a^0\subset W_a$; recall that the form on $W_a$ is skew-symmetric since $a$ is even.
Choose an $\sltwo(\kk)$-triple $\{X_1,H_1,Y_1\}$ in $\spl(W_a^0)$. Extend
each of its members by zero on $(W_a^0)^\perp\subset W_a$, let it act on
$L_a\otimes W_a$ through the second factor, and let it act by zero on
$L_d\otimes W_d$ for $d\neq a$.
The resulting triple in $\g^F$ commutes with $\{X,H,Y\}$, and therefore
$\{X+X_1,H+H_1,Y+Y_1\}$ is a rational $\sltwo$-triple. Since
$L_a\otimes L_2\simeq L_{a+1}\oplus L_{a-1}$, the element $X+X_1$ has partition
$\nu$.

\finalrev{Both values of $b$ occur for suitable choices of the triple
$\{X_1,H_1,Y_1\}$. We choose this triple so that
$X+X_1\in\cO_\nu$.}

\emph{Step 2: a bigrading.}
\finalrev{Let $\gamma$ and $\gamma_1$ be the commuting cocharacters
associated with the two $\mathrm{SL}_2$-actions. For $i,j\in\mathbb Z$, put
\[
\g_{i,j}=\{T\in\g:
\Ad(\gamma(s)\gamma_1(t))T=s^it^jT
\text{ for all }s,t\in\kb^\times\}.
\]
Since $\lambda$ is very even, all weights of $\gamma$ on
$V_{\kb}$ are odd, so $\g_{i,j}=0$ for odd $i$.
The weights of $\gamma_1$ on $V_{\kb}$ are $-1,0,1$,
so $\g_{i,j}=0$ for $|j|>2$.} Put
\[
\mathfrak{n}=\bigoplus_{i\geqslant2,\,j}\g_{i,j},\qquad
\mathfrak{a}=\g_{2,-2},\qquad
\mathfrak{b}=\g_{0,2},\qquad
\mathfrak{n}_0=\bigoplus_{\substack{i\geqslant2,\,j\\(i,j)\neq(2,-2)}}\g_{i,j},
\]
so that $\mathfrak{n}=\mathfrak{n}_0\oplus\mathfrak{a}$. Define linear forms
$\varphi(T)=\langle Y,T\rangle$ and $\varphi_1(T)=\langle Y_1,T\rangle$, and
consider the pairing
\begin{equation}\label{eq:pairing}
\mathfrak{a}\times\mathfrak{b}\longrightarrow\kb,\qquad
(A,B)\longmapsto\varphi([A,B]).
\end{equation}

By invariance of $\langle\ ,\ \rangle$, we have
\[
\varphi([A,B])
=\langle [Y,A],B\rangle
=-\langle A,[Y,B]\rangle.
\]
Since $\langle\ ,\ \rangle$ induces a nondegenerate
pairing between $\g_{i,j}$ and $\g_{-i,-j}$,
the left and right kernels of \eqref{eq:pairing}
are respectively
\[
\ker(\ad Y)\cap\g_{2,-2}
\quad\text{and}\quad
\ker(\ad Y)\cap\g_{0,2}.
\]

We first determine these kernels using the
$\sltwo$-triple $\{X,H,Y\}$.
On each irreducible $\sltwo$-module,
$\ker Y$ is the lowest {$\gamma$-weight} space.
In particular, it contains no vectors of
positive {$\gamma$-weight}, and its {$\gamma$-weight}
zero subspace consists of vectors fixed
by the entire triple.
It follows that
\[
\ker(\ad Y)\cap\g_{2,-2}=0
\]
and
\[
\ker(\ad Y)\cap\g_{0,2}
=
\Lie C_{G_0}(\{X,H,Y\})\cap\g_{0,2}.
\]

We now compute the latter space.
Since $\lambda$ is very even, the isotypic
decomposition \eqref{eq:isotypic} gives
\[
\Lie C_{G_0}(\{X,H,Y\})
\simeq
\bigoplus_d \spl(W_{d,\kb}).
\]
Recall that {$\gamma_1$} acts trivially on
$W_{d,\kb}$ for $d\neq a$.
Write
\[
W_{a,\kb}=W_{a,\kb}^0\oplus U_a,
\qquad
U_a=(W_{a,\kb}^0)^\perp,
\]
where $W_{a,\kb}^0=W_a^0\otimes_{\kk}\kb$.
The weights of {$\gamma_1$} on $W_{a,\kb}^0$
are $1$ and $-1$, while {$\gamma_1$} acts
trivially on $U_a$.

Using the identification
\(
\spl(W_{a,\kb})
\simeq \operatorname{Sym}^2(W_{a,\kb}),
\)
we obtain the {$\gamma_1$}-stable decomposition
\[
\operatorname{Sym}^2(W_{a,\kb})
=
\operatorname{Sym}^2(W_{a,\kb}^0)
\oplus (W_{a,\kb}^0\otimes U_a)
\oplus \operatorname{Sym}^2(U_a).
\]
The {$\gamma_1$}-weights of these three summands
are respectively
\[
\{2,0,-2\},\qquad
\{1,-1\},\qquad
\{0\}.
\]
Thus the {$\gamma_1$-weight $2$} subspace comes 
from $\operatorname{Sym}^2(W_{a,\kb}^0)$.
Since
\(
\spl(W_{a,\kb}^0)\simeq\sltwo(\kb),
\)
its {$\gamma_1$-weight $2$} subspace is precisely
$\kb X_1$. Therefore
\(
\ker(\ad Y)\cap\g_{0,2}
=
\kb X_1.
\)

By the definition of the $G$-invariant bilinear form
$\langle\ ,\ \rangle$ on $\g$ fixed in
Section~\ref{sec:models}, namely
$\langle Z,Z'\rangle=\operatorname{tr}_{V_{\kb}}(ZZ')$,
and since $X_1,Y_1$ act on
$L_{a,\kb}\otimes_{\kb}W_{a,\kb}^0$ through the second factor
and vanish on the other summands, we have
\[
\varphi_1(X_1)=\langle Y_1,X_1\rangle
=a\,\operatorname{tr}_{W_{a,\kb}^0}(Y_1X_1)
=a\neq0,
\]
where the trace on $W_{a,\kb}^0$ equals $1$ by the
standard two-dimensional $\sltwo$-representation,
and the last inequality follows from $a<p$.
Consequently,
\[
\mathfrak{b}'=\mathfrak{b}\cap\ker\varphi_1
\]
is a complement to $\kb X_1$ in
$\mathfrak{b}$.
Since the left kernel of \eqref{eq:pairing}
is zero and its right kernel is $\kb X_1$,
the restriction to
$\mathfrak{a}\times\mathfrak{b}'$
is a perfect pairing.
All these constructions are defined
over $\kk$, so the induced pairing on
$\mathfrak{a}^F\times(\mathfrak{b}')^F$
is also perfect.

\emph{Step 3: the Heisenberg comparison.}
Set
\[
\mathfrak u=\mathfrak n\oplus\mathfrak b',
\qquad
\mathfrak r=\mathfrak n_0\oplus\mathfrak b'.
\]
Since $\g_{i,j}=0$ for $|j|>2$,
we have
\[
[\mathfrak b',\mathfrak b']=0,
\qquad
[\mathfrak n,\mathfrak b']\subset\mathfrak n,
\qquad
[\mathfrak n_0,\mathfrak b']\subset\mathfrak n_0.
\]
Thus $\mathfrak u$ and $\mathfrak r$ are Lie subalgebras.
{Moreover, $\g_{i,j}$ has weight $2i+j$ under the cocharacter
$\widetilde{\gamma}(t)=\gamma(t^2)\gamma_1(t)$, and this weight is
positive for every $\g_{i,j}$ occurring in $\mathfrak u$.}
This positive grading shows that
$\mathfrak u$, and hence $\mathfrak n$ and $\mathfrak r$,
are nilpotent.

Consider the skew-symmetric form
\[
B_\varphi(T,T')=\varphi([T,T'])
\]
on $\mathfrak u$.
Since $\varphi$ is supported on $\g_{2,0}$, the only
bidegrees that can contribute to $B_\varphi$ are
$(2,-2)$ and $(0,2)$. Thus $\mathfrak n_0$ is
contained in the radical of $B_\varphi$.
By Step~2, the pairing between
$\mathfrak a=\g_{2,-2}$ and $\mathfrak b'$
is perfect. Hence the radical of $B_\varphi$ is
exactly $\mathfrak n_0$, and
\[
\mathfrak u/\mathfrak n_0
=\mathfrak a\oplus\mathfrak b'
\]
is a symplectic space, with $\mathfrak a$ and
$\mathfrak b'$ as complementary Lagrangian subspaces.
In particular,
\[
\varphi([\mathfrak n,\mathfrak n])
=\varphi([\mathfrak r,\mathfrak r])=0.
\]

We next pass from Lie algebras to groups.
Since $\lambda$ is very even, $4\mid\dim V$.
Our characteristic hypothesis implies $p>\dim V$:
for $\dim V=4$ this follows from $p>3$, while for
$\dim V\geq8$ it follows from
$p>3(h_G-1)$ and $h_G=\dim V-2$.
The positive grading defined by {$\widetilde{\gamma}$} makes
$\mathfrak u$ strictly upper triangular on $V$.
Thus every $T\in\mathfrak u$ satisfies
$T^{\dim V}=0$.
Since $p>\dim V$, the truncated exponential and
logarithm are well-defined, and the
Baker--Campbell--Hausdorff formula applies.
Consequently, the Lie subalgebras
$\mathfrak u$, $\mathfrak n$, and $\mathfrak r$
give rise to unipotent subgroups of $G_0$. Define
\[
U=\exp(\mathfrak u),\qquad
N=\exp(\mathfrak n),\qquad
R=\exp(\mathfrak r).
\]
These are unipotent subgroups of $G_0$ defined
over $\kk$, with
$U^F=\exp(\mathfrak u^F)$, and similarly for $N$ and $R$.

By the definition of the Cayley map in
Section~\ref{sec:models}, we have
\[
\Cay(\exp T)=\frac12 T+
\text{terms of odd degree at least $3$}.
\]
Since $\varphi$ vanishes on
$[\mathfrak n,\mathfrak n]$ and
$[\mathfrak r,\mathfrak r]$, the formula
\[
\chi_\varphi(\exp T)=\psi(\frac12\varphi(T))
\]
defines characters of $N^F$ and $R^F$.
Every $(i,j)$ in $\mathfrak u$ satisfies
$2i+j\geq2$, whereas $Y$ and $Y_1$ have
{$\widetilde{\gamma}$}-weights $-4$ and $-2$, respectively.
Thus the higher-order terms of the Cayley map
pair trivially with $Y$ and $Y_1$.
In particular, $\chi_\varphi$ on $N^F$ is the
character defining
$\Gamma^{G_0}_{\cO_\lambda^{\rev{\iota}}}$
in Section~\ref{sec:models}, {since $\gamma$ has no weight $1$ on $\g$}
and $G_{\geqslant1.5}^F=N^F$.

To apply the Stone--von Neumann theorem, put
\[
\mathfrak k_0=\mathfrak n_0\cap\ker\varphi.
\]
Note that
$[\mathfrak u,\mathfrak u]\subset\mathfrak n_0$.
Since $\mathfrak n_0$ is the radical of $B_\varphi$,
we also have
$[\mathfrak u,\mathfrak n_0]\subset\mathfrak k_0$.
Hence $\mathfrak k_0$ is an ideal of $\mathfrak u$,
and $\mathfrak u/\mathfrak k_0$ is two-step nilpotent.
Moreover, $\varphi(\mathfrak n_0)=\kb$, since
$Y\neq0$ lies in $\g_{-2,0}$ and the pairing
between $\g_{-2,0}$ and
$\g_{2,0}\subset\mathfrak n_0$ is nondegenerate.
If $\mathfrak a\neq0$, it follows that
\[
U^F/\exp(\mathfrak k_0^F)
\]
is a Heisenberg group, with center
$\exp(\mathfrak n_0^F)/\exp(\mathfrak k_0^F)$
and symplectic quotient
$\mathfrak a^F\oplus(\mathfrak b')^F$.
The subgroups $N^F$ and $R^F$ correspond to
the two complementary Lagrangian subspaces,
and their characters agree on the center.
Therefore, the Stone--von Neumann theorem gives
\[
\Ind_{N^F}^{U^F}\chi_\varphi
\simeq
\Ind_{R^F}^{U^F}\chi_\varphi.
\]
If $\mathfrak a=0$, then $\mathfrak b'=0$ and
$N^F=R^F=U^F$, so the same conclusion holds.
Finally, induction in stages yields
\begin{equation}\label{eq:NR}
\Gamma^{G_0}_{\cO_\lambda^{\rev{\iota}}}
=\Ind_{N^F}^{G_0^F}\chi_\varphi
\simeq
\Ind_{R^F}^{G_0^F}\chi_\varphi.
\end{equation}

\emph{Step 4: passage to $\cO_\nu$.}
Set $H_\nu=H+H_1$ {and $\gamma_\nu(t)=\gamma(t)\gamma_1(t)$}.
Using the simultaneous weight spaces $\g_{i,j}$, define
\[
\g_k({\gamma_\nu})=\bigoplus_{i+j=k}\g_{i,j},
\qquad
\g_{\geqslant k}({\gamma_\nu})=\bigoplus_{r\geqslant k}\g_r({\gamma_\nu}),
\]
and put $\mathfrak u_\nu=\g_{\geqslant1}({\gamma_\nu})$.

We first show that $\mathfrak r=\mathfrak{n}_0\oplus\mathfrak{b}'\subset\mathfrak u_\nu$.
Every $\g_{i,j}$ occurring in $\mathfrak n_0$
satisfies $i\geq2$, $j\geq-2$, and $(i,j)\neq(2,-2)$.
Hence $i+j\geq1$. Moreover,
$\mathfrak b'\subset\g_{0,2}$ has {$\gamma_\nu$}-weight $2$.
Thus $\mathfrak r\subset\mathfrak u_\nu$.
By the weight decomposition and the definition of
$\mathfrak b'$, we also have
$\varphi_1|_{\mathfrak r}=0$.
Together with Step~3, this shows that $\mathfrak r$
is isotropic for the skew-symmetric form
\[
(T,T')\longmapsto(\varphi+\varphi_1)([T,T']).
\]

Consider the quotient
$
\mathfrak v_\nu
=\mathfrak u_\nu/\g_{\geqslant2}({\gamma_\nu}).
$
The triple $\{X+X_1,H_\nu,Y+Y_1\}$ defines a
nondegenerate alternating form on $\mathfrak v_\nu$ by
\[
(\bar T,\bar T')\longmapsto(\varphi+\varphi_1)([T,T']).
\]
The image $\overline{\mathfrak r}$ of $\mathfrak r$
in $\mathfrak v_\nu$ is isotropic.
Choose an $F$-stable Lagrangian subspace
$\mathfrak l_\nu$ containing $\overline{\mathfrak r}$,
and let $\mathfrak q$ be its preimage in
$\mathfrak u_\nu$. Then
$\mathfrak r\subset\mathfrak q$, and
$\mathfrak q$ is the Lie algebra used in
Section~\ref{sec:models} to define the generalized
Gelfand--Graev representation associated with $\cO_\nu$.

Put $Q=\exp(\mathfrak q)$.
Since $\mathfrak q\subset\g_{\geqslant1}({\gamma_\nu})$
and $Y+Y_1$ has {$\gamma_\nu$}-weight $-2$,
the higher-order terms of the fixed Cayley map
pair trivially with $Y+Y_1$.
Thus the character $\chi_{\varphi+\varphi_1}$ of $Q^F$
is the one defined in Section~\ref{sec:models}.
Since $\varphi_1|_{\mathfrak r}=0$, we have
\[
\chi_{\varphi+\varphi_1}|_{R^F}=\chi_\varphi.
\]
By Frobenius reciprocity and semisimplicity,
there is a surjection
\[
\Ind_{R^F}^{Q^F}\chi_\varphi
\twoheadrightarrow\chi_{\varphi+\varphi_1}.
\]
Induction in stages therefore gives
\[
\Ind_{R^F}^{G_0^F}\chi_\varphi
\twoheadrightarrow
\Ind_{Q^F}^{G_0^F}\chi_{\varphi+\varphi_1}
=\Gamma^{G_0}_{\cO_\nu}.
\]
Combining this with \eqref{eq:NR} proves
\eqref{eq:surj}. Applying
$\Hom_{G_0^F}(-,\tau)$ and Frobenius reciprocity
gives the desired injection of Whittaker models.

The argument applies to both
$\rev{\iota}\in\{\I,\II\}$.
Since $\nu$ is not very even, the signed Young
diagram of $X+X_1$ determines a unique rational
$G_0^F$-orbit $\cO_\nu$.

\end{proof}

\subsection{Nonvanishing and rational closure}

\begin{theorem}\label{thm:nonvanishing}
Let $\pi\in\Irr(G^F)$ satisfy $\pi\otimes\det\simeq\pi$, and let $\sigma$ be
either irreducible constituent of $\ResG\pi$. Let $\lambda$ be a very even
partition such that $\cO_\lambda<_G\widetilde{\cO}$ for some
$\widetilde{\cO}\in\sO(\pi)^{\mx}$. Then
\[
\Wh_{\cO_\lambda^{\I}}(\sigma)\neq0\qquad\text{and}\qquad
\Wh_{\cO_\lambda^{\II}}(\sigma)\neq0.
\]
\end{theorem}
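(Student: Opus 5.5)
The plan is to route the nonvanishing through the intermediate orbit of Lemma~\ref{lem:intermediate} and then apply the comparison of Lemma~\ref{lem:comparison}. Let $D$ be the signed Young diagram of $\widetilde{\cO}$ and $D_\lambda$ that of $\cO_\lambda$. Since $D_\lambda<_GD$, Lemma~\ref{lem:intermediate} gives an even part $a$ of $\lambda$ and a diagram $D_\nu$, obtained by replacing two parts $a$ by $(a+1,b)$ and $(a-1,\varepsilon(-1)b)$, with $D_\lambda<_GD_\nu\leqslant_GD$ and $\nu$ not very even. Write $\cO_\nu$ for the corresponding rational $G^F$-orbit. By Lemma~\ref{lem:split} it is also a single rational $G_0^F$-orbit, which we denote the same way.

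First we show that $\cO_\nu\in\sO(\pi)$. By \cite[Theorem~1.1]{Wang}, $\sO(\pi)=\overline{\sO(\pi)^{\mx}}$ for the rational order $\leqslant_G$. Since $\cO_\nu\leqslant_G\widetilde{\cO}$ and $\widetilde{\cO}\in\sO(\pi)^{\mx}$, we get $\Wh_{\cO_\nu}(\pi)\neq0$.

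Next we pass to $\sigma$. The partition $\nu$ is not very even, so Corollary~\ref{cor:half} applies to $\cO_\nu$ viewed as a $G_0^F$-orbit. It gives $\dim\Wh_{\cO_\nu}(\sigma)=\tfrac12\dim\Wh_{\cO_\nu}(\pi)>0$ for either constituent $\sigma$ of $\ResG\pi$.

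Finally we use Lemma~\ref{lem:comparison}. Its hypotheses concern exactly the rational $G_0^F$-orbit whose diagram is obtained from $D_\lambda$ by the replacement above, which is our $\cO_\nu$. For each $\iota\in\{\I,\II\}$ it gives an injection $\Wh_{\cO_\nu}(\sigma)\hookrightarrow\Wh_{\cO_\lambda^{\iota}}(\sigma)$. Hence both $\Wh_{\cO_\lambda^{\I}}(\sigma)$ and $\Wh_{\cO_\lambda^{\II}}(\sigma)$ are nonzero.

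The main point needing care is the compatibility between the two lemmas. The orbit $\cO_\nu$ produced by Lemma~\ref{lem:intermediate}, including its chosen sign $b$, must be one of the orbits to which Lemma~\ref{lem:comparison} applies. The proof of Lemma~\ref{lem:comparison} states that both values of $b$ are realized by suitable choices of the triple $\{X_1,H_1,Y_1\}$, and that the argument works for both $\iota$. So the one thing to verify is that the diagram $D_\nu$ is a valid signed Young diagram for $V$ with signs exactly $(a+1,b),(a-1,\varepsilon(-1)b)$. This is the output of Lemma~\ref{lem:intermediate}, so no further obstacle is expected.
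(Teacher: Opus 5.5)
Your proposal is correct and follows the paper's proof essentially step for step. You obtain the intermediate orbit $\cO_\nu$ from Lemma~\ref{lem:intermediate}, get $\Wh_{\cO_\nu}(\sigma)\neq0$ from \cite[Theorem~1.1]{Wang} and Corollary~\ref{cor:half}, and then apply the injection of Lemma~\ref{lem:comparison} for each $\iota\in\{\I,\II\}$.
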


\begin{proof}
By Lemma~\ref{lem:intermediate}, there is a rational $G^F$-orbit
$\widetilde{\cO}_\nu$, whose partition $\nu$ is obtained from $\lambda$ by
replacing $a,a$ by $a+1,a-1$, such that
$\cO_\lambda<_G\widetilde{\cO}_\nu\leqslant_G\widetilde{\cO}$. Let $\cO_\nu$ be
the unique rational $G_0^F$-orbit with $\pr(\cO_\nu)=\widetilde{\cO}_\nu$. By
\cite[Theorem~1.1]{Wang} and Corollary~\ref{cor:half},
\[
\dim\Wh_{\cO_\nu}(\sigma)=\tfrac12\dim\Wh_{\widetilde{\cO}_\nu}(\pi)>0.
\]
For each $\rev{\iota}\in\{\I,\II\}$, by Lemma~\ref{lem:comparison}, we have
$\dim\Wh_{\cO_\lambda^{\rev{\iota}}}(\sigma)\geqslant\dim\Wh_{\cO_\nu}(\sigma)>0$.
\end{proof}

\begin{proof}[Proof of Theorem~\ref{thm:B}]
The inclusion $\sO(\sigma)\subset\overline{\sO(\sigma)^{\mx}}$ follows from
Corollary~\ref{cor:inclusion}. For the reverse inclusion, choose
$\pi\in\Irr(G^F)$ whose restriction contains $\sigma$, and let
$\cO_1\leqslant_{G_0}\cO_2$ with $\cO_2\in\sO(\sigma)^{\mx}$. If
$\cO_1=\cO_2$ there is nothing to prove, so assume $\cO_1<_{G_0}\cO_2$. By
Theorem~\ref{thm:max}, $\pr(\cO_2)\in\sO(\pi)^{\mx}$, and by
Lemma~\ref{lem:order}(i), $\pr(\cO_1)<_G\pr(\cO_2)$; hence
$\Wh_{\pr(\cO_1)}(\pi)\neq0$ by \cite[Theorem~1.1]{Wang}. If the restriction of
$\pi$ is irreducible, $\Wh_{\cO_1}(\sigma)\neq0$ by \eqref{eq:irred-res}. If it
is reducible and \rev{the partition of $\cO_1$} is not very even, the same
conclusion follows from
Corollary~\ref{cor:half}. If the restriction is reducible and
\rev{the partition of $\cO_1$} is very even,
we have
$\Wh_{\cO_1}(\sigma)\neq0$ by Theorem~\ref{thm:nonvanishing}.
\end{proof}

\section{Unipotent representations and families}\label{sec:unipotent}
% ===============================================================

\subsection{Families and restriction}\label{subsec:families}

For a stable nilpotent orbit $\cO^{\st}$, let $A_{\mathrm{O}}(\cO^{\st})$ and
$\bA_{\mathrm{O}}(\cO^{\st})$ denote the component group and canonical quotient for the full orthogonal
group, respectively. Let
$q_{\mathrm{O}}\colon A_{\mathrm{O}}(\cO^{\st})\to\bA_{\mathrm{O}}(\cO^{\st})$
be the quotient map. When $\cO^{\st}$ is fixed, we write $A_{\mathrm{O}}$ and
$\bA_{\mathrm{O}}$ for short.

Let $\cF$ be a family of unipotent representations of $G_0^F$ with stable
unipotent support $\cO^{\st}_{\cF}$. Let $\cF'$ be the family containing the
irreducible Alvis--Curtis duals $\sigma'$ of the members $\sigma\in\cF$.
By \cite[Section~2.1]{DM} and \cite[Lemma~14.15]{Tay},
\[
\cO_{\sigma}^{\st}=\cO^{\st}_{\cF'},\qquad
\cO_{\sigma'}^{\st}=\cO^{\st}_{\cF}.
\]
For a finite group $B$, put
\[
\mathcal{M}(B)=\{(x,\chi):x\in B,\ \chi\in\Irr(C_B(x))\}/B,
\]
where $B$ acts by simultaneous conjugation.

{We use symbols to classify unipotent representations, as in
\cite[Sections~2.2 and~3.3]{Pan} and \cite[Section~2.6]{Wang}.
Write
$\Lambda_\pi=\binom{A}{B}$ for the symbol attached to $\pi$ and
$\operatorname{def}(\Lambda_\pi)=|A|-|B|$ for its defect.
The rows are finite strictly decreasing sequences of nonnegative integers;
entries occurring in exactly one row are called singles, and a symbol is
called degenerate if its two rows coincide and nondegenerate otherwise.}

\smallskip
\noindent\emph{Even orthogonal groups.}
Suppose $\dim V=2n$, put $\epsilon=\disc(V)$, and let $\tF$ be a family
of unipotent representations of $G^F=\mathrm O_{2n}^{\epsilon}(\kk)$ with
stable unipotent support $\cO^{\st}_{\tF}$. {The symbols $\Lambda_\pi$ attached to the representations
$\pi\in\tF$ have the same multiset of entries, after choosing representatives
with the same total number of entries, and satisfy}
\[
\operatorname{def}({\Lambda_\pi})\equiv
\begin{cases}0\pmod4,&\epsilon=+,\\2\pmod4,&\epsilon=-.
\end{cases}
\]
See \cite[Sections~2.2 and~3.3]{Pan}.

{Fix $\pi\in\tF$ and suppose first that $\Lambda_\pi$ is nondegenerate,
with $2d$ singles, where $d\geqslant1$. By \cite[Section~2.6]{Wang},
\[
\bA_{\mathrm O}(\cO^{\st}_{\tF})\simeq(\mathbb Z/2\mathbb Z)^d.
\]}
\begingroup
Let $\Lambda_0$ be the special symbol of defect zero with the same multiset
of entries as $\Lambda_\pi$.
The representation corresponding to $\Lambda_0$ belongs to
$\mathrm O_{2n}^{+}(\kk)$, which differs from $G^F$ when $\epsilon=-$.
Following \cite[Section~2.6]{Wang}, order the singles as
$z_1>\cdots>z_{2d}$ and set $\delta_j=+$ if $z_j$ lies in the same row
in $\Lambda_\pi$ and $\Lambda_0$, and $\delta_j=-$ otherwise.
Since $\Lambda_0$ has defect zero, the defect condition gives
\[
\prod_{j=1}^{2d}\delta_j
=(-1)^{\operatorname{def}(\Lambda_\pi)/2}=\epsilon.
\]
\endgroup
{In the notation of \cite[Section~2.6]{Wang}, the parameter
$(x^+,x^-)\in\mathcal M(\bA_{\mathrm O}(\cO^{\st}_{\tF}))$ attached to $\pi$ has coordinates}
\[
x^+=(\epsilon_1,\ldots,\epsilon_d),\qquad
\epsilon_i=\prod_{j=1}^{2i}\delta_j,
\qquad
x^-=\left(\prod_{j=1}^{2i-1}\delta_j\right)_{i=1}^d.
\]
Then $\epsilon_d=\epsilon$, and the parameter set for $\tF$ is
\begin{equation}\label{eq:fixed-form-parameters}
{\mathcal M_\epsilon
:=\{(x^+,x^-)\in\mathcal M(\bA_{\mathrm O}(\cO^{\st}_{\tF})):
\epsilon_d=\epsilon\}.}
\end{equation}
{Conversely, every pair in this subset determines the symbol
of a unique representation in $\tF$.} Hence $|\tF|=2^{2d-1}$. {We write}
$\pi_{\cO^{\st}_{\tF},x^+,x^-}$ for the corresponding representation.
Let $-x^-$ denote the coordinatewise sign reversal of $x^-$ in the coordinates
of \cite[Section~2.6]{Wang}. Then
\[
\pi_{\cO^{\st}_{\tF},x^+,x^-}\otimes\det=\pi_{\cO^{\st}_{\tF},x^+,-x^-}.
\]
{Indeed, the symbol attached to $\pi\otimes\det$ is the transpose
of $\Lambda_\pi$ \cite[Section~3.3]{Pan}.} {Transposition preserves the defect modulo $4$ and changes
$\delta_j$ to $-\delta_j$ for every $1\leqslant j\leqslant2d$.} {Each product $\prod_{j=1}^{2i}\delta_j$ defining $x^+$ is
unchanged, whereas each product $\prod_{j=1}^{2i-1}\delta_j$ defining $x^-$
changes sign.}
In particular, this involution preserves $\mathcal M_\epsilon$.

\finalrev{Suppose that the two rows of $\Lambda_\pi$ are different.
Then $\ResG\pi$ is irreducible, and two such restrictions are
isomorphic if and only if their symbols are equal or transposes
of one another (Lemma~\ref{lem:clifford}). The restrictions of
the representations in $\tF$ form a single Lusztig family
\cite[Proposition~3.2 and the proof of Proposition~4.19]{Pan}.
We write
\[
\cF=\{\ResG\pi:\pi\in\tF\}.
\]
}Let $\sim$ be the equivalence relation
$(x^+,x^-)\sim(x^+,-x^-)$, write $[(x^+,x^-)]$ for the class of $(x^+,x^-)$,
and put
\begin{equation}\label{eq:SO-param}
\ResG\pi_{\cO^{\st}_{\tF},x^+,x^-}=\sigma_{\cF,[(x^+,x^-)]}.
\end{equation}
The resulting map is a bijection (Lemma~\ref{lem:clifford}):
\[
\mathcal M_\epsilon/{\sim}\xrightarrow{\ \sim\ }\cF,\qquad
[(x^+,x^-)]\longmapsto\sigma_{\cF,[(x^+,x^-)]}.
\]
The involution is fixed-point-free for $d\geqslant1$, so
$|\cF|=2^{2d-2}$.

{Unlike Lusztig's parametrization for the connected special
orthogonal group, which uses its canonical quotient \cite{Lus84}, we use
$\bA_{\mathrm O}(\cO^{\st}_{\tF})$.
The set $\mathcal M(\bA_{\mathrm O}(\cO^{\st}_{\tF}))$ simultaneously
parametrizes the unipotent representations of $\mathrm O_{2n}^{+}(\kk)$
and $\mathrm O_{2n}^{-}(\kk)$ whose symbols have the fixed multiset of entries;
the condition $\epsilon_d=\epsilon$ selects the form.
After identifying $(x^+,x^-)$ with $(x^+,-x^-)$, the two subsets
$\mathcal M_\epsilon/{\sim}$ parametrize the corresponding families for
$\mathrm{SO}_{2n}^{\epsilon}(\kk)$, whose unipotent supports have the same
partition.}

If the partition $\lambda$ of $\cO_{\pi}^{\st}$ for $\pi\in\tF$ is very even,
{the symbol $\Lambda_\pi$ attached to each $\pi\in\tF$ is degenerate}, $d=0$ and $\epsilon=+$. {The definition in \eqref{eq:fixed-form-parameters} applies
only when $d\geqslant1$. For $d=0$, the canonical quotient is trivial and
the unique symbol has defect zero, so the corresponding parameter sets are}
${\mathcal M_+=\{(0,1)\}}$ and $\mathcal M_-=\varnothing$. \finalrev{Thus $\tF=\{\pi\}$. The degenerate symbol is fixed by
transposition, so $\pi\otimes\det=\pi$, and
Lemma~\ref{lem:clifford} gives two distinct irreducible
constituents on restriction. Each forms a singleton Lusztig
family, denoted by $\cF^{\I}$ and $\cF^{\II}$; see
\cite[(4.6.10)]{Lus84} and the discussion following
\cite[Theorem~3.3]{LM}.} Here $\bA_{\mathrm{O}}(\cO^{\st}_{\tF})$ is trivial, and the second coordinate $1$ denotes its trivial
character. Thus
\[
\ResG\pi_{\cO^{\st}_{\tF},0,1}
=\sigma_{\cF^{\I},[(0,1)]}\oplus\sigma_{\cF^{\II},[(0,1)]}.
\]
Write these two representations as $\sigma^{\I}$ and $\sigma^{\II}$, so
that $\cF^{\I}=\{\sigma^{\I}\}$ and $\cF^{\II}=\{\sigma^{\II}\}$; with the
labels of Proposition~\ref{prop:separation},
$\sO(\sigma^{\I})^{\mx}=\cO_\lambda^{\I}$ and
$\sO(\sigma^{\II})^{\mx}=\cO_\lambda^{\II}$.

\smallskip
\noindent\emph{Odd orthogonal groups.}
Suppose $\dim V=2n+1$, so that
$G^F=\mathrm{O}_{2n+1}^{\epsilon}(\kk)$ with $\epsilon=\disc(V)$.
A family $\cF$ of $G_0^F$ is parametrized
by $\mathcal{M}(\bA(\cO^{\st}_{\cF}))$, where the canonical quotient is now the
one defined for $G_0$, as in \cite[Section~2.6]{Wang}. We write
$\sigma_{\cO^{\st}_{\cF},x^+,x^-}$ for the representation with parameter
$(x^+,x^-)$. Its two extensions to $G^F$ are
$\pi_{\cO^{\st}_{\cF},x^+,x^-,\eta}$, where $\eta\in\{\pm1\}$ is the scalar by
which $-I$ acts. Thus
\[
\ResG\pi_{\cO^{\st}_{\cF},x^+,x^-,\eta}=\sigma_{\cO^{\st}_{\cF},x^+,x^-},\qquad
\pi_{\cO^{\st}_{\cF},x^+,x^-,\eta}\otimes\det
=\pi_{\cO^{\st}_{\cF},x^+,x^-,-\eta}.
\]
In particular, twisting by $\det$ leaves $x^+$ unchanged in odd dimension
as well.

{When $\disc(V)=+$, we call $\sigma\in\cF$ the
\emph{special member} of $\cF$ if it is a constituent of the restriction
of a unipotent representation of $G^F$ attached to a special symbol of
defect zero in even dimension or defect one in odd dimension, as in
\cite[Section~2.6]{Wang}.}

\begin{lemma}\label{lem:unip-one}
Let $\sigma\in\Irr(G_0^F)$ be unipotent. For every $\cO_0\in\sO(\sigma)^{\mx}$,
we have $\dim\Wh_{\cO_0}(\sigma)=1$.
\end{lemma}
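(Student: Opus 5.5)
Our plan is to reduce to the analogous statement for the full orthogonal group, using Theorem~\ref{thm:max}, Corollary~\ref{cor:half} and Proposition~\ref{prop:separation}. Choose a unipotent $\pi\in\Irr(G^F)$ whose restriction contains $\sigma$; this is possible since restriction and induction between $G^F$ and $G_0^F$ preserve unipotence. For unipotent $\pi$ the semisimple parameter is $s=1$, so $C_{G^*}(s)=G^*$ and $\pi^*=\pi$ up to the identification of \cite[Theorem~2.2]{Wang}. The first step is to recall from \cite[Theorem~1.9]{Wang}, or more precisely from its unipotent refinement in \cite[Theorems~1.10 and~1.11]{Wang}, that $\dim\Wh_{\cO}(\pi)$ is explicit for unipotent $\pi$ and $\cO\in\sO(\pi)^{\mx}$. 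By Theorem~\ref{thm:D}-type descriptions, $\sO(\pi)^{\mx}$ is a fiber of $q_{\mathrm O}$ composed with the coordinate map. Hence $|\sO(\pi)^{\mx}|=|\ker q_{\mathrm O}|$ in even dimension, or $|\ker q|/2$ in odd dimension, up to the rational-form constraint. Substituting this into \eqref{eq:Wh-dim-O} gives $\dim\Wh_{\cO}(\pi)=1$ when $\ResG\pi$ is irreducible and $\dim\Wh_{\cO}(\pi)=2$ when $\pi\otimes\det\simeq\pi$ with a non-very-even partition.

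The argument then splits into the three cases of Section~\ref{sec:max}. If the partition is very even, Corollary~\ref{cor:very-even-one} already gives the value $1$. If $\ResG\pi=\sigma$ is irreducible, Theorem~\ref{thm:irred} gives $\dim\Wh_{\cO_0}(\sigma)=\dim\Wh_{\pr(\cO_0)}(\pi)$, so it suffices that this equals $1$. If the restriction is reducible and the partition is not very even, Corollary~\ref{cor:half} halves $\dim\Wh_{\cO}(\pi)$, so it suffices that this equals $2$.

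The main obstacle is the full-group input: showing that $\dim\Wh_{\cO}(\pi)$ equals $1$ or $2$ according to whether $\pi\otimes\det\not\simeq\pi$ or $\pi\otimes\det\simeq\pi$. I would try to read this off from \cite[Theorem~1.9]{Wang}. There the quantity $|\sO(\pi)^{\mx}|\,|\bA(\cO^{\st*})|$ should equal $|A_{\mathrm O}(\cO^{\st})|$, because the fiber of $q_{\mathrm O}$ has size $|A_{\mathrm O}|/|\bA_{\mathrm O}|$ and the dual-side quotient for unipotent $\pi$ is $\bA_{\mathrm O}(\cO^{\st})$. This leaves a residual factor $1$ (odd case: $1/2$ times the $2$ coming from the $\pm$ fiber) or a factor accounting for the $\det$-fixed situation.

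If that bookkeeping is delicate, a fallback is to argue via Theorem~\ref{thm:C} directly. For unipotent $\sigma$, one would show that $|\sO(\sigma)^{\mx}|\,|\bA(\cO^{\st*})|$ equals $|A_{\mathrm{SO}}(\cO^{\st})|$, or twice it in the first case. This follows from Theorem~\ref{thm:A} once one checks that the fiber of $\bS$ has cardinality $|\ker q_{\mathrm O}|$ restricted to the rational orbits in the given form; that check is where the rational-form constraint has to be handled carefully.
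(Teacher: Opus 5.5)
\textbf{Gap: the full-group input is misstated and never established.} Your choice of a unipotent $\pi$ and your very even case via Corollary~\ref{cor:very-even-one} are fine. The problem is the step you call the main obstacle. What holds, and what the paper cites, is that $\dim\Wh_{\cO}(\pi)=1$ for \emph{every} unipotent $\pi\in\Irr(G^F)$ and every $\cO\in\sO(\pi)^{\mx}$ \cite[Theorems~8.11(iv) and~8.12(iv)]{Wang}.

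The value $2$ you predict when $\pi\otimes\det\simeq\pi$ with a non-very-even partition cannot come out of \eqref{eq:Wh-dim-O}: its right-hand side does not see whether $\pi$ is $\det$-fixed. That case is in fact empty for unipotent $\pi$. In odd dimension $\pi\otimes\det\not\simeq\pi$ always. In even dimension, $\pi\otimes\det\simeq\pi$ means the symbol of $\pi$ equals its transpose, so it is degenerate, which gives $d=0$ and a very even partition (Section~\ref{subsec:families}; the proof of Theorem~\ref{thm:fibers} uses this to get $\ResG\pi=\sigma$ when $\lambda$ is not very even). Equivalently, once the correct full-group value $1$ is known, Corollary~\ref{cor:half} would give $1/2$ in your third case. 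That shows the case is vacuous; it does not supply a multiplicity.

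\textbf{The bookkeeping does not work as sketched.} Your two odd-dimensional adjustments are inconsistent with each other. Since $\overline{\det}_{\mathrm O}\circ q_{\mathrm O}=\det$, the whole $q_{\mathrm O}$-fiber over any $x$ with $\overline{\det}_{\mathrm O}(x)=\disc(V)$ lies in $T_V$. Hence $|\sO(\pi)^{\mx}|=|\ker q_{\mathrm O}|$ in both parities. Combined with your identification of $\bA(\cO^{\st*})$ with $\bA_{\mathrm O}(\cO^{\st})$, the odd case of \eqref{eq:Wh-dim-O} would give $1/2$. So that identification fails when $\dim V$ is odd. Your fallback through Theorem~\ref{thm:C} is the same computation rewritten, since Theorem~\ref{thm:C} is itself derived from \eqref{eq:Wh-dim-O}, so it has the same problem.

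\textbf{How to close the gap.} With the uniform value $1$ in hand, no case analysis is needed:
\begin{itemize}
\item By Theorem~\ref{thm:max}, $\pr(\cO_0)\in\sO(\pi)^{\mx}$.
\item Since $\sigma$ is a direct summand of $\ResG\pi$, Proposition~\ref{prop:restriction} gives $\dim\Wh_{\cO_0}(\sigma)\leqslant\dim\Wh_{\pr(\cO_0)}(\pi)=1$.
\item The dimension is positive because $\cO_0\in\sO(\sigma)$.
\end{itemize}
This sandwich $0<\dim\Wh_{\cO_0}(\sigma)\leqslant 1$ is the paper's entire proof.
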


\begin{proof}
Choose a unipotent $\pi\in\Irr(G^F)$ whose restriction contains $\sigma$. By
Theorem~\ref{thm:max}, $\pr(\cO_0)\in\sO(\pi)^{\mx}$, and
$\dim\Wh_{\pr(\cO_0)}(\pi)=1$ by
\cite[Theorems~8.11(iv) and 8.12(iv)]{Wang}. Hence
$0<\dim\Wh_{\cO_0}(\sigma)\leqslant\dim\Wh_{\pr(\cO_0)}(\pi)=1$ by
Proposition~\ref{prop:restriction}.
\end{proof}

\subsection{{Rational orbit coordinates}}\label{subsec:coords}

\begingroup
Let $\cO^{\st}$ be a stable $G_0$-orbit with partition
$\lambda=[\lambda_1^{n_1},\ldots,\lambda_t^{n_t}]$, where
$\lambda_1>\cdots>\lambda_t$, and put $I_\lambda=\{i:\lambda_i\text{ is odd}\}$.
Choose $X\in(\cO^{\st})^F$ and a quadratic space $V^+$ with
\[
\dim V^+=\dim V,\qquad \disc(V^+)=+,\qquad
G^+=\mathrm O(V^+\otimes_{\kk}\kb).
\]
Choose $X_\lambda^+\in\Lie(G^+)^F$ of partition $\lambda$ and put
\[
\cO_\lambda^{+,\st}=\Ad(G^+)X_\lambda^+.
\]
For the component groups, we have
\[
\begin{aligned}
A_{\mathrm O}(\cO^{\st})&:=C_G(X)/C_G(X)^\circ
\simeq\bigoplus_{i\in I_\lambda}(\mathbb Z/2\mathbb Z)x_i,\\
A_{G^+}(\cO_\lambda^{+,\st})&:=C_{G^+}(X_\lambda^+)/C_{G^+}(X_\lambda^+)^\circ
\simeq\bigoplus_{i\in I_\lambda}(\mathbb Z/2\mathbb Z)x_i^+;
\end{aligned}
\]
see \cite[Section~3.1]{Wang}. Here $x_i$ and $x_i^+$ correspond to the odd
part $\lambda_i$. Let $\overline A_{G^+}(\cO_\lambda^{+,\st})$ be the canonical
quotient of $A_{G^+}(\cO_\lambda^{+,\st})$ and let
\[
q_{G^+}\colon A_{G^+}(\cO_\lambda^{+,\st})\twoheadrightarrow
\overline A_{G^+}(\cO_\lambda^{+,\st})
\]
be the quotient map; see \cite[Section~3.1]{Wang}.
We identify the component groups by
\[
\iota_\lambda\colon A_{\mathrm O}(\cO^{\st})\xrightarrow{\ \sim\ }A_{G^+}(\cO_\lambda^{+,\st}),
\qquad x_i\longmapsto x_i^+,
\]
and write
\[
A_{\mathrm O}=A_{\mathrm O}(\cO^{\st}),\qquad
\bA_{\mathrm O}^{+}:=\overline A_{G^+}(\cO_\lambda^{+,\st}),\qquad
q_{\mathrm O}=q_{G^+}\circ\iota_\lambda,
\]
Here $\bA_{\mathrm O}^{+}$ is the group denoted by $\bA_{\mathrm O}$
in Section~\ref{subsec:families}; the superscript $+$ records the choice of $V^+$.

Suppose first that $\lambda$ is not very even. Let
\[
\cS_{\mathrm O}\colon
\{\cO\in\Nil^F/\Ad(G^F):\cO\text{ has partition }\lambda\}
\longrightarrow A_{\mathrm O}
\]
be the map for $G^F=\mathrm O(V)$ defined by
\[
\cS_{\mathrm O}=\iota_\lambda^{-1}\circ S,
\]
where $S$ is the map in \cite[Section~3.1]{Wang} with values in
$A_{G^+}(\cO_\lambda^{+,\st})$.
Choose $X_\lambda^+$ in the reference rational $G^{+F}$-orbit used in
that definition. The map $\cS_{\mathrm O}$ depends on this rational orbit,
but not on its representative $X_\lambda^+$. It is also independent of
$X$ once the component groups are identified by $x_i\mapsto x_i^+$.
For a rational $G_0^F$-orbit
$\cO_0\subset(\cO^{\st})^F$, define
\[
\cS(\cO_0)=\cS_{\mathrm O}\bigl(\pr(\cO_0)\bigr),
\qquad
\bS(\cO_0)=q_{\mathrm O}\bigl(\cS(\cO_0)\bigr).
\]
By the orbit classification in Section~\ref{sec:orbits} and
Lemma~\ref{lem:split}, we have the following bijection:
\[
\cS\colon
\{\cO_0\in\Nil^F/\Ad(G_0^F):\cO_0\subset(\cO^{\st})^F\}
\xrightarrow{\ \sim\ }
T_V:=\{a\in A_{\mathrm O}:\det(a)=\disc(V)\}.
\]
Here $\det\colon A_{\mathrm O}\to\{\pm1\}$ is induced by the determinant
on $C_G(X)$; in particular, $\det(x_i)=-1$ for $i\in I_\lambda$.

For $y\in\bA_{\mathrm O}^{+}$, write
$y=(y_1,\ldots,y_r)$, with $y_j\in\{\pm1\}$, using the coordinates of
\cite[Section~3.1]{Wang}, where $r=\dim_{\mathbb F_2}\bA_{\mathrm O}^{+}$.
In these coordinates,
\[
\overline{\det}_{\mathrm O}\colon\bA_{\mathrm O}^{+}\longrightarrow\{\pm1\},
\qquad
\overline{\det}_{\mathrm O}(y_1,\ldots,y_r)=\prod_{j=1}^r y_j,
\qquad
\overline{\det}_{\mathrm O}\circ q_{\mathrm O}=\det.
\]
Since $q_{\mathrm O}$ is surjective, it follows that
\begin{equation}\label{eq:quotient-fixed-form}
q_{\mathrm O}(T_V)
=\{y\in\bA_{\mathrm O}^{+}:\overline{\det}_{\mathrm O}(y)=\disc(V)\}.
\end{equation}
For $x\in\bA_{\mathrm O}^{+}$, write
\[
\sO(x):=\bS^{-1}(x),
\]
where we regard the inverse image as the union of its rational
$G_0^F$-orbits in $(\cO^{\st})^F$.

If $\lambda$ is very even, then $A_{\mathrm O}=\bA_{\mathrm O}^{+}=1$.
We define $\cS$ and $\bS$ to be zero on the unique rational orbit
$(\cO^{\st})^F$, separately for each of the two stable $G_0$-orbits.
With this convention,
\[
\sO(0)=(\cO^{\st})^F,\qquad
\cS\bigl(\sO(0)\bigr)=\ker q_{\mathrm O}=\{0\}.
\]
\endgroup

\subsection{Canonical quotients and maximal rational orbits}

\begin{theorem}\label{thm:fibers}
Let $\sigma$ be an irreducible unipotent representation of $G_0^F$,
and put $\cO^{\st}=\cO_{\sigma}^{\st}$. Then there exists
$x\in\bA_{\mathrm{O}}(\cO^{\st})$ such that
\[
\sO(\sigma)^{\mx}=\sO(x).
\]
Moreover, $\sO(\sigma)^{\mx}$ is independent of $\psi$.
If $\disc(V)=+$ and $\sigma$ is the special member of its family,
in the sense of Section~\ref{subsec:families}, then
\[
\cS\bigl(\sO(\sigma)^{\mx}\bigr)=\ker q_{\mathrm{O}}.
\]
\end{theorem}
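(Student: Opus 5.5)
We reduce everything to the corresponding statement for the full orthogonal group, \cite[Theorem~1.10]{Wang}, via Theorem~\ref{thm:max}. Choose a unipotent $\pi\in\Irr(G^F)$ whose restriction contains $\sigma$. We split into the very even and the non-very-even cases according to the partition $\lambda$ of $\cO^{\st}=\cO_\sigma^{\st}$; by Propositions~\ref{prop:irred-max} and~\ref{prop:separation} and Section~\ref{subsec:not-very-even}, this is also the partition of $\cO_\pi^{\st}$.

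\emph{Very even case.} Here $\ResG\pi$ is reducible, and Proposition~\ref{prop:separation} gives $\sO(\sigma)^{\mx}=(\cO^{\st})^F$, a single rational orbit. By the convention of Section~\ref{subsec:coords}, this set equals $\sO(0)$ and satisfies $\cS(\sO(0))=\{0\}=\ker q_{\mathrm O}$. Independence of $\psi$ is automatic, since the answer is determined by the Kawanaka wavefront orbit, which is defined via the support of the wavefront set and does not involve $\psi$.

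\emph{Non-very-even case.} Theorem~\ref{thm:max} shows that $\pr$ is a bijection $\sO(\sigma)^{\mx}\to\sO(\pi)^{\mx}$. By construction, $\cS=\cS_{\mathrm O}\circ\pr$, where $\cS_{\mathrm O}$ is the map of \cite[Section~3.1]{Wang} transported along $\iota_\lambda$. Thus $\bS(\cO_0)=q_{\mathrm O}(\cS_{\mathrm O}(\pr\cO_0))$, and the fibres of $\bS$ are exactly the $\pr$-preimages of the corresponding fibres for $G^F$. Now apply \cite[Theorem~1.10]{Wang}: for unipotent $\pi$, $\sO(\pi)^{\mx}$ is a full fibre of $q_{G^+}\circ S$ over some $x\in\bA_{\mathrm O}^+$. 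Taking preimages under the bijection $\pr$, which maps $(\cO^{\st})^F$-orbits to the $G^F$-orbits of partition $\lambda$ and is bijective on them by Lemma~\ref{lem:split}, yields $\sO(\sigma)^{\mx}=\sO(x)$.

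The same theorem of Wang gives independence of $\psi$ for $\sO(\pi)^{\mx}$, and hence for $\sO(\sigma)^{\mx}$. One point needs care: the choice of $\pi$ is not canonical (it could be $\pi$ or $\pi\otimes\det$). This does not matter, because Proposition~\ref{prop:restriction} shows that both choices have the same $\sO(\pi)^{\mx}$.

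\emph{Special member.} Suppose $\disc(V)=+$ and $\sigma$ is the special member. Then $\pi$ can be taken to be the unipotent representation attached to the special symbol of defect $0$ or $1$. For this $\pi$, \cite[Theorem~1.10]{Wang} (the special case) gives $S(\sO(\pi)^{\mx})=\ker q_{G^+}$. Transporting by $\iota_\lambda$ gives $\cS(\sO(\sigma)^{\mx})=\iota_\lambda^{-1}(\ker q_{G^+})=\ker q_{\mathrm O}$.

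\emph{Main obstacle.} The hardest step is checking that the coordinate conventions match. Specifically, we must verify that Wang's map $S$ for the form $V$, with the reference rational $G^{+F}$-orbit used in \cite[Section~3.1]{Wang}, is precisely what we denoted $\cS_{\mathrm O}$, so that the fibres and kernels correspond on the nose. We must also confirm that $\ker q_{\mathrm O}\subset T_V$ when $\disc(V)=+$, so that the special statement is consistent with \eqref{eq:quotient-fixed-form}. This follows since $\overline{\det}_{\mathrm O}\circ q_{\mathrm O}=\det$ forces $\det=+$ on $\ker q_{\mathrm O}$.
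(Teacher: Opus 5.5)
Your proof follows the paper's. In the non-very-even case you use Theorem~\ref{thm:max} to get $\sO(\sigma)^{\mx}=\pr^{-1}(\sO(\pi)^{\mx})$ and transport \cite[Theorem~1.10]{Wang} along $\cS=\cS_{\mathrm O}\circ\pr$. In the very even case you combine Proposition~\ref{prop:separation} with the convention $\cS=\bS=0$. The paper first notes that $\ResG\pi=\sigma$ there and then applies Theorem~\ref{thm:irred}. Applying Theorem~\ref{thm:max} directly, as you do, is equally valid.

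The one step that does not hold up as written is $\psi$-independence in the very even case. In this paper $\cO_\sigma^{\st}$ is defined as the stable orbit carrying $\sO(\sigma)^{\mx,\st}$. That is, it is defined through the nonvanishing of $\Wh_{\cO_0}(\sigma)$, which is built from $\psi$. So saying it ``does not involve $\psi$'' because it comes from the wavefront set is circular. A priori, the maximal orbit of a given constituent could switch between $\cO_\lambda^{\I}$ and $\cO_\lambda^{\II}$ when $\psi$ changes. Wang's $\psi$-independence of $\sO(\pi)^{\mx}=\cO_\lambda$ does not detect such a switch.

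The paper closes this with \cite[Lemma~14.15]{Tay}: $\cO_\sigma^{\st}$ is the unipotent support of the irreducible Alvis--Curtis dual of $\sigma$. That support is defined from character values and is therefore $\psi$-free. Since $(\cO^{\st})^F$ is a single rational orbit, $\sO(\sigma)^{\mx}$ is $\psi$-free as well.

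Alternatively, every nontrivial additive character is $\psi(a\,\cdot)$ with $a\in\kk^\times$. Replacing $\psi$ by $\psi(a\,\cdot)$ amounts to replacing the triple $\{X,H,Y\}$ by $\{a^{-1}X,H,aY\}$, so it rescales $\sO(\sigma)$ by $a$. Since $aX=\Ad(\gamma(t))X$ with $t^2=a$ and $\gamma(t)\in G_0(\kb)$, this rescaling preserves stable $G_0$-orbits. Lemma~\ref{lem:split} then pins down the rational orbit. With either fix, your proof is complete.
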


\begin{proof}
Choose a unipotent $\pi\in\Irr(G^F)$ whose restriction
contains $\sigma$, and let $\lambda$ be the partition
of $\cO_{\sigma}^{\st}$.
Suppose first that $\lambda$ is not very even.
By Theorem~\ref{thm:max}, the partition of
$\cO_{\pi}^{\st}$ is also $\lambda$.
In this case, $\ResG\pi=\sigma$, by
\eqref{eq:SO-param} in even dimension and by
Section~\ref{subsec:families} in odd dimension.
Hence by Theorem~\ref{thm:irred}, we have
\[
\sO(\sigma)^{\mx}
=\pr^{-1}\bigl(\sO(\pi)^{\mx}\bigr).
\]
\finalrev{We have $\sO(\sigma)^{\mx}=\sO(x)$ for some
$x\in\bA_{\mathrm O}(\cO^{\st})$. If $\disc(V)=+$ and
$\sigma$ is the special member of its family, then
$\cS\bigl(\sO(\sigma)^{\mx}\bigr)=\ker q_{\mathrm O}$
\cite[Theorem~1.10]{Wang}.
By \cite[Theorem~9.2]{Wang}, $\sO(\pi)^{\mx}$ is independent
of $\psi$. Since $\pr$ is independent of $\psi$, the same
holds for $\sO(\sigma)^{\mx}$.}

Now suppose that $\lambda$ is very even.
Then $\disc(V)=+$,
$A_{\mathrm{O}}(\cO^{\st})=\bA_{\mathrm{O}}(\cO^{\st})=1$,
and $\sigma$ is the special member of its
singleton family. By Proposition~\ref{prop:separation},
\[
\sO(\sigma)^{\mx}
=(\cO^{\st})^F=\sO(0),
\qquad
\cS\bigl(\sO(\sigma)^{\mx}\bigr)
=\ker q_{\mathrm{O}}=\{0\}.
\]
Finally, by \cite[Lemma~14.15]{Tay},
$\cO^{\st}$ is the unipotent support of
the irreducible Alvis--Curtis dual of $\sigma$,
and hence is independent of $\psi$.
Since $(\cO^{\st})^F$ is a single rational
$G_0^F$-orbit, $\sO(\sigma)^{\mx}$ is also
independent of $\psi$.
\end{proof}

\subsection{Alvis--Curtis duality}\label{subsec:AC}

Let $\sigma\in\cF$ be an irreducible unipotent representation of $G_0^F$, with
irreducible Alvis--Curtis dual $\sigma'$ as in the introduction. Choose a
unipotent $\pi\in\Irr(G^F)$ whose restriction contains $\sigma$, let $\tF$ be
its family, and let $\pi'$ be the irreducible Alvis--Curtis dual of $\pi$. For
unipotent representations, Alvis--Curtis duality is compatible with
restriction from $G^F$ to $G_0^F$ \cite{Lus82,Lus84}:
\[
\sigma\hookrightarrow\ResG\pi\implies\sigma'\hookrightarrow\ResG\pi'.
\]
Write $\pi=\pi_{\cO^{\st}_{\tF},x^+,x^-}$ in the even case and
$\pi=\pi_{\cO^{\st}_{\tF},x^+,x^-,\eta}$ in the odd case and write $x^+=(\epsilon_1,\dots,\epsilon_d)$,
$\epsilon_i\in\{\pm1\}$, as in
Section~\ref{subsec:families}. The
map of \cite[(9.52)]{Wang} is
\[
\Phi_{\tF}(x^+)=
\begin{cases}
(\epsilon_1,\ \epsilon_1\epsilon_2,\ \dots,\ \epsilon_{d-1}\epsilon_d),
&\dim V\text{ even},\\
(\disc(V)\epsilon_1,\ \epsilon_1\epsilon_2,\ \dots,\ \epsilon_{d-1}\epsilon_d,\ \epsilon_d),
&\dim V\text{ odd}.
\end{cases}
\]
The coordinates of $x^+$ are the family coordinates of
Section~\ref{subsec:families}. The coordinates of
$\Phi_{\tF}(x^+)$ are the Sommers coordinates of
Section~\ref{subsec:coords}, regarded as coordinates on
$\bA_{\mathrm{O}}(\cO^{\st}_{\cF})$.
When $d=0$, it is represented by the empty tuple in the
even case and by $(\disc(V))$ in the odd case.

\rev{We verify that $\Phi_{\tF}(x^+)$ is compatible with $V$.}
In even dimension with
$d\geqslant1$, \eqref{eq:fixed-form-parameters} gives
\[
\overline{\det}_{\mathrm O}\bigl(\Phi_{\tF}(x^+)\bigr)
=\epsilon_1\prod_{i=2}^d(\epsilon_{i-1}\epsilon_i)
=\epsilon_d=\disc(V).
\]
In odd dimension, the additional final coordinate $\epsilon_d$ makes the
product equal to $\disc(V)$; for $d=0$ this follows from the stated
one-coordinate convention.

\begin{theorem}\label{thm:AC}
Let $\sigma\in\cF$ be an irreducible unipotent
representation of $G_0^F$, and let $\sigma'$ be its
irreducible Alvis--Curtis dual. Choose a unipotent
$\pi\in\Irr(G^F)$ whose restriction contains $\sigma$,
and let $\tF$ and $(x^+,x^-)$ be the family and
parameter of $\pi$, respectively. Then
\[
\sO(\sigma')^{\mx}
=\sO\bigl(\Phi_{\tF}(x^+)\bigr).
\]
The right-hand side is independent of the choice of $\pi$.
\end{theorem}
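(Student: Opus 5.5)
The plan is to reduce to the full orthogonal group by passing to the Alvis--Curtis dual $\pi'$ of $\pi$, using the compatibility $\sigma\hookrightarrow\ResG\pi\implies\sigma'\hookrightarrow\ResG\pi'$ recalled at the start of Section~\ref{subsec:AC}. The corresponding statement for $G^F$, \cite[Theorems~1.11 and~9.x]{Wang}, says that
\[
\sO(\pi')^{\mx}=\bigl\{\cO:\ \bS_{\mathrm O}(\cO)=\Phi_{\tF}(x^+)\bigr\},
\]
the fiber being taken inside $\cO^{\st}_{\tF}$, the stable unipotent support of $\tF$.

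We then split according to the partition $\lambda$ of $\cO^{\st}_{\cF}=\cO^{\st}_{\sigma'}$, which is also the partition of $\cO^{\st}_{\pi'}$. This holds by Theorem~\ref{thm:max} applied to $\sigma'$ and $\pi'$, together with $\cO_{\sigma'}^{\st}=\cO^{\st}_{\cF}$ from Section~\ref{subsec:families}.

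\emph{Case 1: $\lambda$ not very even.} By Theorem~\ref{thm:max}, $\pr$ restricts to a bijection $\sO(\sigma')^{\mx}\xrightarrow{\sim}\sO(\pi')^{\mx}$. Since $\lambda$ is not very even, each preimage under $\pr$ is unique (Lemma~\ref{lem:split}), and it lies in the unique stable $G_0$-orbit with partition $\lambda$. Hence $\sO(\sigma')^{\mx}=\pr^{-1}(\sO(\pi')^{\mx})$. By the definition $\bS=q_{\mathrm O}\circ\cS_{\mathrm O}\circ\pr$ in Section~\ref{subsec:coords}, this preimage is exactly $\bS^{-1}(\Phi_{\tF}(x^+))=\sO(\Phi_{\tF}(x^+))$. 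The key check is that the Sommers coordinates and reference orbit used in \cite{Wang} for $\cS$ agree with our $\cS_{\mathrm O}$. This holds by construction, since $\cS_{\mathrm O}=\iota_\lambda^{-1}\circ S$.

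\emph{Case 2: $\lambda$ very even.} Then $\cO^{\st}_{\cF}$ carries a single rational orbit, $\bA_{\mathrm O}$ is trivial, and $\Phi_{\tF}(x^+)$ is the trivial element (here $d=0$). Theorem~\ref{thm:fibers} applied to $\sigma'$ gives $\sO(\sigma')^{\mx}=(\cO^{\st}_{\cF})^F=\sO(0)$, as required.

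Independence of the choice of $\pi$ comes down to the other possible choice. If $\sigma$ extends to $G^F$, the two extensions are $\pi$ and $\pi\otimes\det$; by Section~\ref{subsec:families}, twisting by $\det$ changes only $x^-$ (even dimension) or $\eta$ (odd dimension), so $x^+$, and hence $\Phi_{\tF}(x^+)$, is unchanged. If $\sigma$ does not extend, then $\pi=\IndG\sigma$ is unique. In either case the left-hand side does not depend on $\pi$ at all, so the right-hand side cannot either.

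The main obstacle is matching conventions. We must ensure that the fiber description of $\sO(\pi')^{\mx}$ in \cite{Wang} is stated in exactly the coordinates of Section~\ref{subsec:coords}, including the choice of reference rational orbit for $V^+$ when $\disc(V)=-$. We must also check that the family $\tF'$ of $\pi'$ has the same support $\cO^{\st}_{\cF}$, and that the map $\Phi_{\tF}$ is applied to the parameter of $\pi$ and not that of $\pi'$. For the first point, I would appeal directly to \cite[(9.52)]{Wang}, together with the compatibility check $\overline{\det}_{\mathrm O}(\Phi_{\tF}(x^+))=\disc(V)$ carried out just before the theorem; that check guarantees the fiber is nonempty and lies in $q_{\mathrm O}(T_V)$.
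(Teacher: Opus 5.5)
Your proposal is correct and follows essentially the same route as the paper. You pass to $\pi'$ and identify $\sO(\sigma')^{\mx}=\pr^{-1}(\sO(\pi')^{\mx})$ in the non-very-even case, pulling back \cite[Theorem~1.11]{Wang} through $\cS=\cS_{\mathrm O}\circ\pr$; you handle the very even case through the triviality of $\bA_{\mathrm O}$; and you obtain independence of $\pi$ from the fact that $x^+$ is unchanged under $\pi\mapsto\pi\otimes\det$, with only cosmetic differences (invoking Theorem~\ref{thm:max} directly, and noting that the left-hand side depends only on $\sigma$).
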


\begin{proof}
Let $\pi'$ be the irreducible Alvis--Curtis dual
of $\pi$. By \cite[Lemma~14.15]{Tay},
$\cO_{\sigma'}^{\st}=\cO^{\st}_{\cF}$.

Suppose first that the partition of
$\cO^{\st}_{\cF}$ is not very even.
As in the proof of Theorem~\ref{thm:fibers},
we have
\[
\sO(\sigma')^{\mx}
=\pr^{-1}\bigl(\sO(\pi')^{\mx}\bigr).
\]
The desired equality then follows from
\cite[Theorem~1.11]{Wang},
$\cS=\cS_{\mathrm{O}}\circ\pr$, and
\eqref{eq:quotient-fixed-form}.
By Lemma~\ref{lem:clifford}, any other irreducible
lift of $\sigma$ is $\pi\otimes\det$.
As shown in Section~\ref{subsec:families},
twisting by $\det$ preserves $x^+$ in even
dimension and changes only $\eta$ in odd
dimension. Thus the right-hand side is
independent of the choice of $\pi$.

When the partition of $\cO^{\st}_{\cF}$ is
very even, the desired equality follows
immediately from Theorem~\ref{thm:max},
Lemma~\ref{lem:split}, and the triviality of
$\bA_{\mathrm{O}}(\cO^{\st}_{\cF})$.
In this case, independence of the choice of
$\pi$ is also immediate.
\end{proof}
% ===============================================================
\section*{\texorpdfstring{\rev{Acknowledgments}}{Acknowledgments}}

Z.~Wang gratefully acknowledges financial support from the National
Natural Science Foundation of China (Grant No.~12571012) and the Jilin
Provincial Postdoctoral Foundation. P.~Li acknowledges support from the New
Cornerstone Science Foundation through the New Cornerstone Investigator Program
awarded to Professor Xuhua He.

\section*{{Declaration of AI use}}

\rev{We used Claude (Anthropic) to generate Figures~\ref{fig:O8plus}
and~\ref{fig:O8minus}, and GPT-6 (OpenAI) to check and polish the manuscript.
All statements and proofs have been checked by the authors, who take full
responsibility for the content of this paper.}

% ===============================================================

\end{document}